\numberwithin{equation}{section}
\DeclareFontFamily{OT1}{rsfs}{}
\DeclareFontShape{OT1}{rsfs}{n}{it}{<-> rsfs10}{}
\DeclareMathAlphabet{\mathscr}{OT1}{rsfs}{n}{it}
\theoremstyle{plain}
\newtheorem{theorem}{Theorem}[section]
\newtheorem{proposition}[theorem]{Proposition}
\theoremstyle{definition}
\newtheorem{remark}[theorem]{Remark}
\newcommand\R{\mathbb{R}}
\newcommand\Z{\mathbb{Z}}
\newcommand\C{\mathbb{C}}
\newcommand\eps{\varepsilon}
\begin{document}

\title[Blowup for supercritical NLS]{Finite time blowup for a supercritical defocusing nonlinear Schr\"odinger system}

\author{Terence Tao}
\address{UCLA Department of Mathematics, Los Angeles, CA 90095-1555.}
\email{tao@math.ucla.edu}

%\email{}

\subjclass[2010]{35Q41}

\begin{abstract}  We consider the global regularity problem for defocusing nonlinear Schr\"odinger systems
$$ i \partial_t + \Delta u = (\nabla_{\R^m} F)(u) + G $$
on Galilean spacetime $\R \times \R^d$, where the field $u\colon \R^{1+d} \to \C^m$ is vector-valued, $F\colon \C^m \to \R$ is a smooth potential which is positive, phase-rotation-invariant, and homogeneous of order $p+1$ outside of the unit ball for some exponent $p >1$, and $G: \R \times \R^d \to \C^m$ is a smooth, compactly supported forcing term.  This generalises the scalar defocusing nonlinear Schr\"odinger (NLS) equation, in which $m=1$ and $F(v) = \frac{1}{p+1} |v|^{p+1}$.  It is well known that in the energy sub-critical and energy-critical cases when $d \leq 2$ or $d \geq 3$ and $p \leq 1+\frac{4}{d-2}$, one has global existence of smooth solutions from arbitrary smooth compactly supported initial data $u(0)$ and forcing term $G$, at least in low dimensions.  In this paper we study the supercritical case where $d \geq 3$ and $p > 1 + \frac{4}{d-2}$.  We show that in this case, there exists a smooth potential $F$ for some sufficiently large $m$, positive and homogeneous of order $p+1$ outside of the unit ball, and a smooth compactly choice of initial data $u(0)$ and forcing term $G$ for which the solution develops a finite time singularity.  In fact the solution is locally discretely self-similar with respect to parabolic rescaling of spacetime.  This demonstrates that one cannot hope to establish a global regularity result for the scalar defocusing NLS unless one uses some special property of that equation that is not shared by these defocusing nonlinear Schr\"odinger systems.

As in a previous paper \cite{tao-nlw} of the author considering the analogous problem for the nonlinear wave equation, the basic strategy is to first select the mass, momentum, and energy densities of $u$, then $u$ itself, and then finally design the potential $F$ in order to solve the required equation.  
\end{abstract}

\maketitle

%%%%%%%%%%%%%%%%%%%%%%%%%%%%%%%%%%%%%%%%%%%%%%%%%

\section{Introduction}

Let $\C^m$ be a standard finite-dimensional complex vector space, with the real inner product
$$ \langle (z_1,\dots,z_m), (w_1,\dots,w_m) \rangle_{\C^m} \coloneqq \operatorname{Re} \sum_{j=1}^m z_j \overline{w_j}$$
and norm $\|z\|_{\C^m} \coloneqq \langle z, z \rangle_{\C^m}^{1/2}$.

A function $F\colon \C^m \to \R$ is said to be \emph{phase-rotation-invariant and homogeneous of order $\alpha$} for some real $\alpha$ if we have
\begin{equation}\label{homog0}
 F(\lambda v) = |\lambda|^\alpha F(v)
\end{equation}
for all $\lambda \in \C$ and $v \in \C^m$; thus for instance $F(e^{i\theta} v) = F(v)$ for all $\theta \in \R$ and $v \in \C^m$.  In particular, differentiating \eqref{homog0} at $\lambda=1$ we obtain \emph{Euler's identity}
\begin{equation}\label{euler}
\langle v, (\nabla_{\C^m} F)(v)\rangle_{\C^m} = \alpha F(v)
\end{equation}
as well as the variant
\begin{equation}\label{variant}
\langle iv, (\nabla_{\C^m} F)(v)\rangle_{\C^m} = 0
\end{equation}
for all $v \in \C^m$ where a gradient $\nabla_{\C^m} F(v) \in \C^m$ exists.  Here the gradient $\nabla_{\C^m} F(v)$ is defined via duality by the formula
\begin{equation}\label{drv}
 \langle (\nabla_{\C^m} F)(v), w \rangle_{\C^m} = \frac{d}{dt} F(v+tw)|_{t=0}
\end{equation}
for all test directions $w \in \C^m$.
When $\alpha$ is not an integer, it is not possible for such homogeneous functions to be smooth at the origin unless they are identically zero (this can be seen by performing a Taylor expansion of $F$ around the origin).  To avoid this technical issue, we also introduce the notion of $F$ being \emph{phase-rotation-invariant and homogeneous of order $\alpha$ outside of the unit ball}, by which we mean that \eqref{homog0} holds for $\lambda \in \C$ and $v \in \C^m$ whenever $|\lambda|, \|v\|_{\C^m} \geq 1$, or whenever $|\lambda|=1$.

Define a \emph{potential} to be a function $F\colon \C^m \to \R$ that is smooth away from the origin; if $F$ is also smooth at the origin, we call it a \emph{smooth potential}.  We say that the potential is \emph{defocusing} if $F$ is positive away from the origin, and \emph{focusing} if $F$ is negative away from the origin.  In this paper we consider nonlinear Schr\"odinger systems of the form
\begin{equation}\label{nls}
i \partial_t u + \Delta u = (\nabla_{\C^m} F)(u) + G
\end{equation}
where the unknown field $u\colon \R \times \R^d \to \C^m$ is assumed to be smooth, $\Delta = \partial_{x_j} \partial_{x_j}$ is the spatial Laplacian (with the usual summation conventions), $\partial_t, \partial_{x_1}, \dots, \partial_{x_d}$ are the partial derivatives in time and space, $F\colon \C^m \to \R$ is a smooth potential, and $G\colon \R \times \R^d \to \C^m$ is a smooth compactly supported forcing term.  In the homogeneous case $G=0$, this is (formally, at least) a Hamiltonian evolution equation, with Hamiltonian
$$ H(u) \coloneqq \int_{\R^d} \frac{1}{2} \| \nabla u \|_{\R^d \otimes \C^m}^2 + F(u)\ dx$$
which is non-negative when $F$ is defocusing, where the quantity $\| \nabla u \|_{\R^d \otimes \C^m}^2$ is given by the formula
$$ \| \nabla u \|_{\R^d \otimes \C^m}^2 \coloneqq \langle \partial_{x_j} u, \partial_{x_j} u \rangle_{\C^m}$$
with the usual summation conventions.  By Noether's theorem, the phase rotation invariance of this Hamiltonian yields (formally, at least) the conservation of mass $\int_{\R^d} \|u\|_{\C^m}^2\ dx$, while the translation invariance of the Hamiltonian similarly yields conservation of the momentum $2\int_{\R^d} \langle \partial_j u, iu \rangle_{\C^m}\ dx$.

We will restrict attention to potentials $F$ which are phase-rotation-invariant and homogeneous outside of the unit ball of order $p+1$ for some exponent $p>1$.  The well-studied \emph{nonlinear Schr\"odinger equation} (NLS) corresponds to the case when $m=1$ and $F(v) = \frac{|v|^{p+1}}{p+1}$ (for defocusing NLS) or $F(v) = - \frac{|v|^{p+1}}{p+1}$ (for focusing NLS), with the caveat that one needs to restrict $p$ to be an odd integer if one wants these potentials to be smooth at the origin.  

The natural initial value problem to study here is the Cauchy initial value problem, in which one specifies a smooth initial position $u_0\colon \R^d \to \C^m$ and forcing term $G: \R \times \R^d \to \C^m$, as well as the potential $F$, and asks for a smooth solution $u$ to \eqref{nls} with $u(0,x) = u_0(x)$.   To avoid illposedness issues relating to the infinite speed of propagation of the Schr\"odinger equation, we will require the data $u_0$ and $G$ to be compactly supported in space, and restrict attention to solutions $u$ that are in the Schwartz class.

Standard energy methods (see e.g. \cite{cazenave} or \cite{tao-book}) show that for any choice of smooth compactly supported data $u_0: \R^d \to \C^m$ and smooth compactly supported forcing term $G: \R \times \R^d \to \C^m$, one can construct a unique smooth solution $u$ to \eqref{nls} in $(-T_-,T_+) \times \R^d$ for some $0 < T_-, T_+ \leq \infty$ which is Schwartz in space, with $T_-, T_+$ maximal amongst all such solutions.  Furthermore, if $T_+ < \infty$, then $\|u(t) \|_{L^\infty}$ goes to infinity as $t \to T_+$, and similarly for $T_-$.  In these latter situations we say that the initial value problem exhibits finite time blowup.

The \emph{global regularity problem} for a given choice of potential $F$ asks if the latter situation does not occur, that is to say that for every choice of smooth, compactly supported data $u_0,G$ there is a smooth global solution.  

The answer to this question depends in a somewhat complicated way on the dimension $d$, the exponent $p$, and whether the potential $F$ is focusing or defocusing; the literature here is vast and the following discussion is not meant to be comprehensive.  Readers may consult the texts \cite{cazenave}, \cite{borg-book}, \cite{tao-book} for more complete references.  

Consider first the \emph{mass subcritical} case $p < 1 + \frac{4}{d}$.  It is known in this case from Strichartz estimates and contraction mapping arguments (see e.g. \cite{cazenave}) that the initial value problem is globally well-posed in the Sobolev space $H^1(\R^d)$, regardless of whether the potential $F$ is defocusing or not; in the low-dimensional case $d \leq 3$, Strichartz estimates then place the solution locally in the space $L^4_t L^\infty_x(\R \times \R^d)$, which is sufficient when combined with standard persistence of regularity arguments based on the energy method (see e.g. \cite[Proposition 3.11]{tao-book}) shows that solutions remain smooth for all time.  The case $d=4$ can be handled by modifications of the arguments in \cite{rv}.  The global regularity question in higher dimensions $d>4$ is still not fully resolved; note that for the analogous question for the nonlinear wave equation (NLW), it was shown recently in \cite{tao-high} that global regularity can in fact fail in extremely high dimensions $d \geq 11$, even in the ``extremely subcritical'' case when the potential $F$ and all of its derivatives are bounded.

Now consider the case when $p$ is \emph{mass critical or supercritical} in the sense that $p \geq 1 + \frac{4}{d}$, but is also \emph{energy critical or subcritical} in the sense that either $d < 3$, or $p \leq 1 + \frac{4}{d-2}$.  In the case of the focusing NLS, the well known viriel argument of Glassey \cite{glassey} shows that finite time blowup can\footnote{Global regularity can however be restored if one imposes a suitable smallness condition on the data $u_0,G$; see e.g. \cite{cazenave}.} occur (and in fact \emph{must} occur if the initial data has negative Hamiltonian).  If instead the potential is defocusing, then it is known that the initial value problem is globally well-posed in the energy space $H^1(\R^d)$.  In energy-subcritical situations when $d<3$ or $p < 1 + \frac{4}{d-2}$, this claim can again be established from Strichartz estimates and contraction mapping arguments; see e.g. \cite{cazenave}, \cite{borg-book}, \cite{tao-book}.  The energy-critical case when $d \geq 3$ and $p = 1 + \frac{4}{d-2}$ is more delicate; in the case of scalar NLS (in which $m=1$ and $F(u) = \frac{|u|^{p+1}}{p+1}$), the $d=3$ case was established in \cite{gopher} (after several previous partial results), and the higher dimensional cases $d=4$ and $d>4$ were treated\footnote{These papers are primarily concerned with the homogeneous case $G=0$, but one can use the stability properties of NLS (see e.g. \cite{TV}) to extend from the homogeneous case to the inhomogeneous case, at least in the context of $H^1$ global well-posedness.} in \cite{rv} and \cite{visan} respectively.  It is likely that these results can be extended to more general defocusing potentials, though we do not attempt this here.  Again, in low dimensional cases $d \leq 3$, this $H^1$ local well-posedness can be used in conjunction with Strichartz estimates to establish global regularity; see e.g. \cite{borg-book}, \cite{ckstt}, \cite{tao-book}; the $d=4$ case was treated in \cite{rv}.  As before, the status of the global regularity question in higher dimensions $d>4$ is not yet fully resolved.

Finally, we turn to the \emph{energy-supercritical} case when $d \geq 3$ and $p > 1 + \frac{4}{d-2}$, which is the main focus of this paper.  The Glassey viriel argument \cite{glassey} continues to show that finite time blowup can occur here in the focusing case.  In the defocusing case, the situation is less well understood.  There are a number of results \cite{bgt}, \cite{cct}, \cite{carles}, \cite{alazard}, \cite{carles-2}, \cite{burq} that demonstrate that the solution map, if it exists at all, is highly unstable, although one can at least construct global weak solutions, which are not known to be unique; see \cite{gv}, \cite{alazard}, \cite{tao-weak}.  

The main result of this paper is to show that, at least for certain choices of defocusing potential $F$ and data $u_0, G$, one in fact has blowup in finite time.

\begin{theorem}[Finite time blowup]\label{main}  Let $d \geq 3$, let $p > 1 + \frac{4}{d-2}$, and let $m$ be a sufficiently large integer. Then there exists a defocusing smooth potential $F\colon \C^m \to \R$ that is phase-rotation-invariant and homogeneous of order $p+1$ outside of the unit ball, and a smooth compactly supported of initial data $u_0\colon \R^d \to \C^m$ and forcing term $G\colon \R^d \times \R \to \C^m$, such that there is a smooth, compactly supported solution $u\colon [0,1)\times \R^d \to \C^m$ to the nonlinear Schr\"odinger system \eqref{nls} with the property that $\|u(t)\|_{L^\infty(\R^d)}$ goes to infinity as $t \to 1^-$.
\end{theorem}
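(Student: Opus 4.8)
The plan is to carry out the backwards construction advertised in the abstract: prescribe the conserved densities of $u$, then the field $u$, and only at the end the potential $F$. We look for a solution that is discretely self-similar with respect to the parabolic rescaling symmetry of the equation, with singular point placed at $(t,x)=(1,0)$. So I would look for $u$ of the form
\[ u(t,x) = (1-t)^{-\frac{1}{p-1}}\, v\!\left(\frac{x}{\sqrt{1-t}},\ \log\frac{1}{1-t}\right), \]
where the profile $v\colon \R^d\times(\R/L\Z)\to\C^m$ is smooth, $L$-periodic in its last variable $s$ (for a period $L>0$ to be chosen), not identically zero, and compactly supported in the first variable $\xi$ — indeed vanishing to infinite order at the edge of its $\xi$-support — so that $u(t,\cdot)$ is supported in a fixed ball and $\|u(t)\|_{L^\infty(\R^d)}\to\infty$ as $t\to 1^-$, at the rate $(1-t)^{-1/(p-1)}$. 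Since $\nabla_{\C^m}F$ is homogeneous of degree $p$ wherever $F$ is homogeneous of degree $p+1$, substituting this ansatz into the homogeneous ($G=0$) version of \eqref{nls} reduces it to the profile equation
\[ i\partial_s v + \frac{i}{p-1}\, v + \frac{i}{2}\,\xi\cdot\nabla_\xi v + \Delta_\xi v = (\nabla_{\C^m}F)(v), \]
and the remaining discrepancy in \eqref{nls} (e.g.\ near the spatial support boundary, where $\|u\|<1$ and $F$ need not be homogeneous) is then simply \emph{defined} to be the forcing term $G\coloneqq i\partial_t u + \Delta u - (\nabla_{\C^m}F)(u)$, with $u_0\coloneqq u(0,\cdot)$; one checks at the end that $G$ so defined is smooth and compactly supported.

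It remains to exhibit a profile $v$ and a potential $F$ — defocusing, smooth, phase-rotation-invariant, and homogeneous of order $p+1$ outside the unit ball — satisfying the profile equation. Because $F$ is chosen last, the equation should be read as \emph{prescribing} the value of $\nabla_{\C^m}F$ at each point of the image of $v$; the real constraints on $v$ are then that this prescription (i) be consistent — no conflict wherever $v$ revisits a point of $\C^m$, modulo phase rotation — (ii) be \emph{integrable} to an honest function $F$, and (iii) yield an $F$ that is positive and homogeneous of degree $p+1$. Following the author's companion paper on the nonlinear wave equation \cite{tao-nlw}, I would secure these by designing the densities first: choose explicit smooth functions on $\R^d\times(\R/L\Z)$ to play the roles of the mass density $\|v\|^2$, the momentum density, the kinetic energy density $\tfrac12\|\nabla_\xi v\|^2$, and the potential energy density (which will equal $F(v)$), arranged so that (a) they satisfy the local conservation laws of \eqref{nls} rewritten in the self-similar variables — these are precisely the closedness conditions that make $(\nabla_{\C^m}F)(v)$ integrable to a potential — (b) the potential energy density is strictly positive, forcing $F$ to be defocusing, and (c) their homogeneity weights are compatible with $F$ being homogeneous of degree $p+1$ for the given supercritical $p$.

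I would then realise these densities by an actual $\C^m$-valued field, and this is where $m$ must be taken large. Writing $v$ as a superposition of finitely many modes carried on mutually orthogonal complex subspaces of $\C^m$, the extra dimensions give enough room both to match $\|v\|^2$, $\|\nabla_\xi v\|^2$ and the momentum density simultaneously, and to arrange that $(\xi,s)\mapsto v(\xi,s)$ be injective modulo phase rotation on the region $\{\|v\|\ge1\}$; there the image is an embedded submanifold-with-boundary of $\C^m$ on which prescribing $\nabla_{\C^m}F$ involves no pointwise conflict, and building $v$ phase-equivariantly makes the prescribed data phase-rotation-invariant. One then defines $F$ on a neighbourhood of the image by integrating the prescribed tangential gradient — legitimate thanks to the conservation-law checks above — extends it to a smooth function on $\{\|z\|\ge1\}$ that is homogeneous of degree $p+1$ and agrees with the prescribed data near the image, and finally extends $F$ smoothly and positively into $\{\|z\|<1\}$, where only positivity and smoothness are required. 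Reassembling $u$ from $v$ and reading off $G$ and $u_0$ as in the first paragraph then yields the theorem.

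The main obstacle is the density-design step together with its finite-dimensional realisation: one must choose the mass, momentum, kinetic and potential energy profiles so that \emph{simultaneously} the several local conservation laws hold (without which no potential $F$ reproduces the prescribed gradient), the potential energy density stays strictly positive (so $F$ is genuinely defocusing — this is the decisive point distinguishing the present situation from the focusing case, where Glassey's virial argument already gives blowup), the homogeneity exponent comes out exactly $p+1$, and all profiles remain smooth and compactly supported in $\xi$. Reconciling the positivity of $F$ with the rigid algebraic identities imposed by the conservation laws and by the supercritical parabolic scaling, while still leaving enough room inside $\C^m$ to embed the image of $v$, is the heart of the matter; once a valid density profile is in hand, the explicit construction of $u$ and the extension of $F$ are comparatively routine, if lengthy.
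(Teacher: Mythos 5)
Your high-level plan — prescribe densities, then the field, then the potential, via a parabolically self-similar ansatz — matches the paper's. But there is a fatal error in the first step: you cannot take the profile $v$ to be compactly supported in $\xi$. In the supercritical range this is inconsistent with mass conservation, as follows. If $v$ is supported in $\{|\xi|\le R\}$ and $G$ is smooth and bounded, then (since $F$ is phase-rotation-invariant, hence $(\nabla_{\C^m}F)(0)=0$, and since the homogeneous profile equation holds wherever $\|u\|_{\C^m}\ge 1$) the forcing $G(t,\cdot)$ is supported in $\{0<\|u(t,x)\|_{\C^m}<1\}\subset\{|x|\le R\sqrt{1-t}\}$, a set of measure $O((1-t)^{d/2})$ on which $\|u\|_{\C^m}<1$. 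Phase-rotation invariance also gives the exact mass balance
\[
\frac{d}{dt}\int_{\R^d}\|u(t,x)\|_{\C^m}^2\,dx \;=\; -2\int_{\R^d}\langle iG,u\rangle_{\C^m}\,dx \;=\; O\bigl(\|G\|_{L^\infty}(1-t)^{d/2}\bigr),
\]
while the ansatz gives $\|u(t)\|_{L^2}^2=(1-t)^{\gamma}M(s)$ with $s=-\log(1-t)$, $\gamma:=\tfrac{d}{2}-\tfrac{2}{p-1}>0$, and $M(s):=\|v(\cdot,s)\|_{L^2(\R^d)}^2$ periodic. Since $\gamma>0$, $\|u(t)\|_{L^2}^2\to0$ as $t\to1^-$; integrating the displayed bound from $t$ to $1$ then forces $M(s)=O((1-t)^{1+2/(p-1)})\to0$, so the periodic nonnegative $M$ is identically zero and $v\equiv0$.

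This obstruction is exactly why the paper's self-similar solution is \emph{not} spatially compactly supported but nowhere vanishing, with mass density scaling like $\rho^{-4/(p-1)}$ — it carries infinite mass and energy, and it is precisely the supercritical divergence of these quantities that makes the conservation-law constraints satisfiable at all (compare the ``goodness'' discussion around the energy law, which rules out such self-similar blowup when the renormalised energy is finite, as happens in the subcritical and critical cases). The forcing $G$ in the paper is produced by a \emph{fixed-scale} spatial cutoff $\varphi(x)$ applied to the self-similar solution, so it lives in the fixed annulus $\{\tfrac12\le|x|\le1\}$ away from the singular point $(1,0)$, and its smoothness across $t=1$ follows from the (nontrivially arranged) smoothness of the self-similar solution up to the initial slice $\{t=0\}$ of $H_d$. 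Your discrepancy $G$, by contrast, would live in a region shrinking to $(1,0)$, and the computation above shows this cannot work. As a secondary matter, realising a prescribed Gram-type matrix by a $\C^m$-valued field is also not a matter of stacking modes on orthogonal subspaces: the paper needs and proves a partially complexified variant of the Nash embedding theorem (via G\"unther's perturbative scheme), which is where the large-$m$ requirement really enters. But that is a refinement; the compact-support ansatz for $v$ is the step that cannot be saved.
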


\begin{table}[ht]
\caption{A somewhat oversimplified summary of whether nonlinear Schr\"odinger systems are necessarily globally well-posed, or can admit finite time blowup solutions, for various criticality types of exponents and for both focusing and defocusing nonlinearities. Theorem \ref{main} establishes the bottom entry on the third column.}\label{blow}
\begin{tabular}{|l|l|l|l|}
\hline 
Mass & Energy & Defocusing & Focusing \\
\hline 
Subcritical & Subcritical & Global well-posedness & Global well-posedness \\
Critical & Subcritical & Global well-posedness & Finite time blowup \\
Supercritical & Subcritical & Global well-posedness & Finite time blowup \\
Supercritical & Critical & Global well-posedness & Finite time blowup \\
Supercritical & Supercritical & Finite time blowup & Finite time blowup \\
\hline
\end{tabular}
\end{table}

When combined with the known uniqueness theory for the equations \eqref{nls} (see e.g. \cite{cazenave}, \cite{tao-book}) we see that there cannot be any smooth global solution to \eqref{nls} with this data that is Schwartz in space (one can relax the Schwartz requirement considerably, but we will not attempt to do so here).  The presence of the forcing term $G$ is an unfortunate artefact of our method, which (due to the absence of finite speed of propagation for Schr\"odinger equations) requires one to use the forcing term to truncate a solution to a homogeneous equation that decays too slowly at infinity.  It is however reasonable to conjecture that the above theorem can be strengthened by making $G$ vanish (with $u$ now being Schwartz in space rather than compactly supported).

We have not attempted to optimise the value of $m$ produced by the arguments in this paper, but it will grow quadratically in the dimension $d$: $m=O(d^2)$.  It would of course be of great interest to set $m$ equal to $1$ in order to have the blowup result apply to the \emph{scalar} defocusing NLS; however our method requires a lot of ``freeness'' to the solution $u$ (in particular invoking a version of the Nash embedding theorem \cite{nash}), and it does not seem possible to adapt it for this purpose.  Nevertheless, Theorem \ref{main} does construct a ``barrier'' against any attempt to prove global regularity for the scalar supercritical NLS, in that any such attempt must crucially rely on some property of the scalar equation that is not enjoyed by the vector-valued equations considered here.  For instance, this theorem rules out any approach to global regularity for scalar supercritical NLS that relies on somehow manipulating the conservation laws of mass, momentum and energy to generate new \emph{a priori} bounds on the solution.

Theorem \ref{main} is an analogue of the recent finite time blowup result by the author \cite{tao-nlw} for vector-valued defocusing NLW equations, and the argument follows broadly similar lines, in particular performing a sequence of ``quantifier elimination'' steps, each of which removes one or more of the unknown fields from the problem.  

The first reduction is to reduce matters to constructing a discretely self-similar solution to a homogeneous NLS system \eqref{nls}, in which $G$ is now zero, the potential $F$ is homogeneous everywhere (not just outside the unit ball), and the solution $u$ obeys the discrete self-similarity relationship $u(4 t, 2 x ) = e^{i\alpha} 2^{-\frac{2}{p-1}} u(t,x)$ (the phase rotation $\alpha$ is needed for technical reasons, but can be ignored for a first reading).  In order to perform this reduction, it will be important that the self-similar solution $u$ remains smooth all the way up to the initial time slice $t=0$ (except at the spacetime origin $(t,x)=(0,0)$ where a singularity occurs).  See Theorem \ref{main-2} for a precise statement of the claim needed.

Now that the forcing term $G$ is eliminated from the problem, the next step is to eliminate the potential $F$, by first locating a self-similar field $u$, and then constructing a homogeneous defocusing potential $F$ to solve the equation \eqref{nls} with that given $u$.  In order for this to be possible, the field $u$ (as well as the ``potential energy'' field $V = F(u)$) have to obey some differential equations (related to the conservation of mass, momentum, and energy and the Euler identity \eqref{euler}), as well as some positivity and regularity hypotheses; see Theorem \ref{main-3} for a precise statement.  The derivation of Theorem \ref{main-2} from Theorem \ref{main-3} relies on a classical extension theorem of Seeley \cite{seeley} that allows one to extend a smooth function on a submanifold with boundary to a smooth function on the entire manifold.

The differential equations alluded to in the previous paragraph can be expressed in terms of the potential energy field $V$ and the ``Gram-type matrix'' $G[u,u]$ of $u$, which is a $(2d+4) \times (2d+4)$ matrix consisting of inner products $\langle D_1 u, D_2 u \rangle_{\C^m}$ for various differential operators 
$$ D_1, D_2 \in \{ 1, i, \partial_{x_1}, \dots, \partial_{x_d}, \partial_t, i \partial_{x_1}, \dots, i \partial_{x_d}, i \partial_t \}.$$
The coefficients of the Gram-type matrix $G[u,u]$ necessarily obey a number of constraints; for instance $G[u,u]$ will be symmetric and positive definite, and one has the Leibniz type identities
$$\partial_{x_j} \langle u,u \rangle_{\C^m} = 2 \langle u, \partial_{x_j} u \rangle_{\C^m}$$ 
and 
$$ \partial_{x_j} \langle iu, \partial_{x_k} u \rangle_{\C^m} - \partial_{x_k} \langle iu, \partial_{x_j} u \rangle_{\C^m} = 2 \langle i \partial_{x_j} u, \partial_{x_k} u \rangle_{\C^m}.$$  
One can then eliminate the field $u$ in favour of the Gram-type matrix by reducing Theorem \ref{main-3} to a statement about the existence of a certain matrix $G$ of fields (as well as a potential field $V$) obeying the above-mentioned constraints and differential equations; see Theorem \ref{main-4} for a precise statement.  In order to reconstruct the field $u$ from the Gram-type matrix $G$, one needs a ``partially complexified'' version of the Nash embedding theorem \cite{nash}; this is the main reason why the target dimension $m$ is required to be large.  Unfortunately, the existing forms of the Nash embedding theorem in the literature are not quite suitable for this application, and we need to adapt the \emph{proof} of that theorem to establish the embedding theorem required (which we formalise as Proposition \ref{gadc}).  The proof of this embedding theorem is given in Appendix \ref{gad-app}.
 
The Gram-type matrix $G$ contains a large number of fields, while simultaneously being required to obey a large number of constraints.  One can cut down the degrees of freedom considerably, as well as the number of constraints, by requiring the Gram-matrix to be homogeneous with respect to parabolic scaling, and also to be rotation-invariant in a certain tensorial sense.  This reduces the number of independent components of $G$ and $V$ to seven scalar fields $g_{1,1}, g_{\partial_r,\partial_r}, g_{\partial_\omega,\partial_\omega}, g_{\partial_r,\partial_t}, g_{1,i\partial_r}, g_{1,i\partial_t}, v$ which obey a certain number of conservation laws, positivity hypotheses, and some additional constraints such as homogeneity; see Theorem \ref{main-4} for a precise statement.  The fields $g_{D_1,D_2}$ for various differential operators $D_1,D_2$ are supposed to be proxies for the inner products $\langle D_1 u, D_2 u\rangle_{\C^m}$, while $v$ is a proxy for the potential energy $V(u)$.  (Strictly speaking, $G$ contains another scalar field $g_{\partial_t,\partial_t}$ (a proxy for $\| \partial_t u \|_{\C^m}^2$) which is independent of the other fields, but it is essentially unconstrained by any of the conservation laws, and can be set to be extremely large and then ignored.)

Amongst the various constraints between the remaining scalar fields is the energy conservation law, which in this notation becomes
\begin{equation}\label{energy-cons}
 \partial_t\left(\frac{1}{2} g_{\partial_r, \partial_r} + \frac{d-1}{2} g_{\partial_\omega,\partial_\omega} + v\right) - \left(\partial_r + \frac{d-1}{r}\right) g_{\partial_r, \partial_t} = 0.
\end{equation}
This law can be used in the energy sub-critical case to rule out the type of discretely self-similar solutions we are trying to construct here; with a bit more effort involving an additional Morawetz-type identity arising from momentum conservation, one can also rule out such solutions in the energy-critical case.  However, in the energy-supercritical case it turns out that the conservation law \eqref{energy-cons} is easy to satisfy, basically because the scalar field $g_{\partial_r, \partial_t}$ (representing energy current) that appears in this law has no presence in any of the other conservation laws, allowing the energy to be transported spatially at an essentially arbitrary rate.  In the energy-supercritical case, the total energy becomes infinite, and so it becomes possible to eliminate the field $g_{\partial_r, \partial_t}$ and the energy conservation equation \eqref{energy-cons}, reducing one to a variant of Theorem \ref{main-4} with one fewer scalar field and one fewer constraint equation.  
One can similarly use another constraint
$$ g_{1,i\partial_t} + \frac{1}{2} \left(\partial_r^2 + \frac{d-1}{r} \partial_r \right) g_{1,1} - g_{\partial_r,\partial_r} - (d-1) g_{\partial_\omega,\partial_\omega} = (p+1)v$$
(which ultimately arises from the Euler identity \eqref{euler}) to easily eliminate the field $g_{1,i\partial_t}$ (which makes no appearance in any of the other constraints), leaving one with just five remaining scalar fields $g_{1,1}, g_{\partial_r,\partial_r}, g_{\partial_\omega,\partial_\omega}, g_{1,i\partial_r}, v$; see Theorem \ref{main-5} for a precise statement.

An inspection of the remaining constraints reveals that the potential field $v$ and the angular stress $g_{\partial_\omega,\partial_\omega}$ play almost\footnote{This phenomenon is analogous to the well-known fact that when applying separation of variables in polar coordinates to the free Schr\"odinger equation $i \partial_t u + \Delta u = 0$ in which $u(t,r \omega) = v(t,r) Y_\ell(\omega)$ for some spherical harmonic $Y_\ell$ of degree $\ell$, the effect of the spherical harmonic is identical to that of a (defocusing) Coulomb type potential $\frac{\ell(\ell+1)}{r^2}$.} the same roles, and some elementary manipulations allow one to effectively absorb the potential $v$ into the angular stress $g_{\partial_\omega,\partial_\omega}$ (and also the radial stress $g_{\partial_r, \partial_r}$), allowing one to reduce to the case $v=0$; see Theorem \ref{main-6} for a precise statement.  Now there are just four independent scalar fields $g_{1,1}, g_{\partial_r,\partial_r}, g_{\partial_\omega,\partial_\omega}, g_{1,i\partial_r}$ that one needs to locate.

One of the remaining constraints is the momentum conservation law, which can be rewritten as
$$ \partial_r(r^{d-1} g_{\partial_r, \partial_r}) = (d-1) r^{d-2} g_{\partial_\omega,\partial_\omega} + S_1$$
where $S_1$ is the field
$$ S_1 \coloneqq \frac{1}{4} r^{d-1} \left( \partial_r \left(\partial_r^2 + \frac{d-1}{r} \partial_r\right) g_{1,1} + 2 \partial_t g_{1,i\partial_r} \right ).$$
One can integrate this law to obtain a representation of the radial stress $g_{\partial_r,\partial_r}$ as a certain integral involving $g_{\partial_\omega,\partial_\omega}$ and $S_1$.  The requirement that $g_{\partial_r,\partial_r}$ be smooth up to the initial time $t=0$ enforces some asymptotic vanishing conditions on the integrand, while the positive definiteness of the Gram matrix enforces an additional inequality on the integral.  Once these conditions are satisfied, one can then eliminate the field $g_{\partial_r,\partial_r}$ from the problem, leaving only three fields $g_{1,1}, g_{\partial_\omega,\partial_\omega}, g_{1,i\partial_r}$ to construct.  See Theorem \ref{main-7} for a precise statement.

The angular stress $g_{\partial_\omega,\partial_\omega}$ is now only constrained by a nonnegativity condition and by the constraints on the integral involving $g_{\partial_\omega,\partial_\omega}$ and $S_1$ mentioned above.  It is then not difficult to eliminate $g_{\partial_\omega,\partial_\omega}$, and reduce matters to locating just two fields $g_{1,1}, g_{1,i\partial_r}$ that obey a mass conservation law
$$ \partial_t g_{1,1} =  2 \left(\partial_r + \frac{d-1}{r} \right) g_{1,i\partial r}$$
together with a number of technical additional conditions, mostly involving integrals of the quantity $S_1$ mentioned above.  See Theorem \ref{main-8} for a precise statement.

The mass conservation law can be solved explicitly by using the ansatz
\begin{align*}
g_{1,1} &= 2 r^{1-d} \partial_r (r^d W) \\
g_{1,i\partial_r} &= r^{1-d} \partial_t (r^d W)
\end{align*}
for a suitable scalar field $W$.  Now that there is only one field $W$ to choose, it becomes possible to write down an explicit choice of this field that obeys the few remaining constraints required of it; we do so in Section \ref{conclude}.

The author is supported by NSF grant DMS-1266164 and by a Simons Investigator Award.

\section{Notation}

Throughout this paper, the spatial dimension $d$, the target dimension $m$, and the exponent $p$ will be fixed.  Unless otherwise stated, we will always be assuming the energy super-critical hypotheses
\begin{equation}\label{energy-supercrit}
d \geq 3; \quad p > 1 + \frac{4}{d-2}.
\end{equation}
We will also assume that the target dimension $m$ is sufficiently large depending on $d$.
In particular, all the theorems in subsequent sections will implicitly have these hypotheses present (though from Theorem \ref{main-4} onwards, the target dimension $m$ plays no further role as the field $u$ is eliminated at that point).

We use the asymptotic notation $X = O(Y)$ or $X \ll Y$ to denote the estimate $|X| \leq CY$ for some $C$ depending on the above parameters $p,d$.  In some cases we will explicitly allow the implied constant $C$ to depend on additional parameters.

Most of our analysis will take place in the spacetime region
\begin{equation}\label{hd-def}
 H_d \coloneqq ([0,+\infty) \times \R^d) \backslash \{(0,0)\}
\end{equation}
or the one-dimensional variant
\begin{equation}\label{h1-def}
 H_1 \coloneqq ([0,+\infty) \times \R) \backslash \{(0,0)\},
\end{equation}
that is to say on the portion of spacetime consisting of the present $t=0$ and future $t > 0$, but with the spacetime origin $(0,0)$ removed.  On these regions we introduce the parabolic magnitude function $\rho: H_d \to \R$ or $\rho: H_1 \to \R$ defined by
\begin{equation}\label{rho-def}
\rho(t,x) \coloneqq (t^2+|x|^4)^{1/4}
\end{equation}
for $(t,x) \in H_d$, or
\begin{equation}\label{rho1-def}
\rho(t,r) \coloneqq (t^2+r^4)^{1/4}
\end{equation}
for $(t,r) \in H_1$.  We also introduce the discrete scaling operator $T: H_d \to H_d$ by the formula
\begin{equation}\label{tscale}
 T(t,x) := (4t, 2x)
\end{equation}
(thus for instance $\rho \circ T = 2 \rho$)
and let $T^\Z := \{T^n: n \in \Z \}$ be the group of scalings generated by $T$.  A key point is that the quotient space $H_d/T^\Z$ of spacetime by discrete scalings is compact; indeed one can view this space as the set $\{ (t,x) \in H_d: 1 \leq \rho \leq 2 \}$ with the boundaries $\rho=1$ and $\rho=2$ identified. We have chosen to use the scaling $(t,x) \mapsto (4t, 2x)$ in \eqref{tscale} to generate the discrete self-similarity, but this is an arbitrary choice, and one could just as well have used another scaling $(t,x) \mapsto (\lambda_0^2 t, \lambda_0 x)$ for some fixed $\lambda_0>1$.

\section{Reduction to constructing a discretely self-similar solution}\label{reduct}

We begin the proof of Theorem \ref{main}. In analogy with the argument in \cite{tao-nlw}, the first step is to reduce to locating a discretely self-similar solution to a homogeneous nonlinear Schr\"odinger equation. thus eliminating the role of the forcing term $G$.  In the previous paper \cite{tao-nlw}, one could use the finite speed of propagation of nonlinear wave equations to restrict spacetime to a light cone $\{ (t,x): t > 0, |x| \leq t \}$ for the purposes of locating this solution.  In the current context of nonlinear Schr\"odinger equations, one has infinite speed of propagation, and so one can only restrict to the region $H_d$ defined in \eqref{hd-def}.
To get from here to Theorem \ref{main}, one must now apply a spatial cutoff, which is responsible for the forcing term $G$ that is present in this paper but not in the previous work \cite{tao-nlw}.

We turn to the details.  We will derive Theorem \ref{main} from

\begin{theorem}[First reduction]\label{main-2}  There exists a defocusing potential $F\colon \C^m \to \R$ which is phase-rotation-invariant and homogeneous of order $p+1$ and a smooth function $u \colon H_d \to \C^m \backslash \{0\}$ that solves \eqref{nls} (with $G=0$) on its domain and is nowhere vanishing, and also discretely self-similar in the sense that 
\begin{equation}\label{uts}
 u(T(t, x) ) = e^{i\alpha} 2^{-\frac{2}{p-1}} u(t,x)
\end{equation}
for all $(t,x) \in H_d$, and some $\alpha \in \R$, where $T$ is the scaling \eqref{tscale}.
\end{theorem}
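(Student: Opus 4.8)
\emph{Plan of proof.} Following the strategy of \cite{tao-nlw}, I would build the field $u$ (and an auxiliary ``potential energy'' field $V$) first, and only then manufacture a potential $F$ that fits. So the first move is to reduce Theorem \ref{main-2} to the existence of smooth, discretely self-similar fields
$$ u\colon H_d \to \C^m\setminus\{0\}, \qquad V\colon H_d \to (0,\infty), $$
with $u(T(t,x)) = e^{i\alpha} 2^{-2/(p-1)} u(t,x)$ and $V(T(t,x)) = 2^{-2(p+1)/(p-1)} V(t,x)$, such that, writing $w := i\partial_t u + \Delta u$, one has the pointwise relations
$$ \langle u, w\rangle_{\C^m} = (p+1)V, \quad \langle iu, w\rangle_{\C^m} = 0, \quad \partial_{x_j} V = \langle w, \partial_{x_j} u\rangle_{\C^m}, \quad \partial_t V = \langle w, \partial_t u\rangle_{\C^m}, $$
and such that the induced map $\bar u\colon H_d/T^\Z \to \mathbb{CP}^{m-1}$, $z \mapsto [u(z)]$, is a smooth embedding of the compact manifold-with-boundary $H_d/T^\Z$, with $\{u, iu, \partial_{x_1}u, \dots, \partial_{x_d}u, \partial_t u\}$ linearly independent at every point. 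This is (the substance of) the forthcoming Theorem \ref{main-3}, obtained by the sequence of reductions described in the introduction; the displayed relations are precisely those forced on $(u,V)$ by requiring $V = F(u)$ and $(\nabla_{\C^m}F)(u) = w$ with $F$ phase-rotation-invariant and homogeneous of order $p+1$, via Euler's identity \eqref{euler} and its variant \eqref{variant}, and that this over-determined first-order system can be solved is the heart of the matter.

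Granting such $(u,V)$, here is how I would build $F$. By discrete self-similarity, $u(H_d)$ lies in the $\C^\times$-saturation $M := \{\mu\, u(z) : \mu \in \C^\times,\ z \in \Dcal\}$, where $\Dcal := \{1 \le \rho \le 2\}$ is a compact fundamental domain for $T^\Z$. Since $\Dcal$ is compact, $u$ is bounded and bounded away from $0$, and $\bar u$ is an embedding, $M$ is a closed, $\C^\times$-invariant, embedded submanifold-with-boundary of $\C^m\setminus\{0\}$ of dimension $d+3$; its only boundary is the part of $M$ over the time slice $\{t=0\}$, since the faces $\rho=1$ and $\rho=2$ of $\Dcal$ are identified with one another by the $\C^\times$-action. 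On $M$ I prescribe the $1$-jet
$$ \hat F(\mu\, u(z)) := |\mu|^{p+1} V(z), \qquad \hat\Phi(\mu\, u(z)) := |\mu|^{p-1}\mu\, w(z); $$
$\hat F$ is well defined thanks to the self-similar scaling of $V$, and $\hat\Phi$ is well defined because the self-similarity of $u$ forces $w(T z) = e^{i\alpha} 2^{-2p/(p-1)} w(z)$. The point is that $\hat\Phi$ is a \emph{consistent} choice of gradient along $M$: on the spanning tangent vectors $u, iu, \partial_{x_1}u, \dots, \partial_{x_d}u, \partial_t u$ of $T_{u(z)}M$ the inner products $\langle \hat\Phi, \cdot\rangle_{\C^m}$ return, respectively, $(p+1)V,\ 0,\ \partial_{x_1}V, \dots, \partial_{x_d}V,\ \partial_t V$, which are exactly the directional derivatives of $\hat F$ — this is the content of the four displayed relations. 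Hence $(\hat F, \hat\Phi)$ is the $1$-jet along $M$ of a smooth function on a neighbourhood of $M$, and the extension theorem of Seeley \cite{seeley} (extending a smooth function on a submanifold with boundary to a smooth function on the whole manifold) produces a smooth $F_0\colon \C^m \to \R$ with $F_0|_M = \hat F$ and $(\nabla_{\C^m}F_0)|_M = \hat\Phi$.

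Finally I would turn $F_0$ into a genuine defocusing, phase-rotation-invariant, order-$(p+1)$-homogeneous potential without disturbing its $1$-jet along $M$. Since $M$ is $\C^\times$-invariant and $\hat F, \hat\Phi$ are already homogeneous (of orders $p+1$ and $p$) and phase-equivariant along $M$, I would replace $F_0$ by
$$ F(v) := \|v\|_{\C^m}^{p+1} \int_0^{2\pi} \tilde F\bigl( e^{i\theta} v/\|v\|_{\C^m} \bigr)\, \frac{d\theta}{2\pi} \quad (v \ne 0), \qquad F(0) := 0, $$
where $\tilde F\colon S^{2m-1} \to \R$ agrees with $F_0$ near $M \cap S^{2m-1}$ (via a cutoff) and equals a large positive constant off a neighbourhood of it. Then $F$ is smooth on $\C^m\setminus\{0\}$, homogeneous of order $p+1$, phase-rotation-invariant (the averaging over $\theta$ forces this), and strictly positive, hence defocusing; and since $F_0 > 0$ near the compact set $M\cap S^{2m-1}$ while the integrand restricted to $M$ is forced by $\hat F$ and the relations above, a direct computation gives $F|_M = \hat F$ and $(\nabla_{\C^m}F)|_M = \hat\Phi$, whence $F(u) = V$ and $(\nabla_{\C^m}F)(u) = w = i\partial_t u + \Delta u$ throughout $H_d$. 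Thus $u$ solves \eqref{nls} with $G=0$ for this $F$, is nowhere vanishing, and is discretely self-similar, which is Theorem \ref{main-2}. (The phase $\alpha$ is irrelevant here because $F$ is phase-rotation-invariant; it is carried along only because the reconstruction of $u$ in Theorem \ref{main-3} naturally produces it.)

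The hard part is entirely in Theorem \ref{main-3}: the over-determined system above — equivalently, conservation of mass, momentum and energy together with the Euler constraint — must be solved by self-similar fields meeting the positivity and non-degeneracy requirements. Solving the PDE system is where energy-supercriticality is used decisively (as the introduction notes, the energy current can then be transported at an essentially unconstrained rate), and it is carried out by the further reductions Theorems \ref{main-4}--\ref{main-8}, culminating in an explicit one-parameter ansatz; converting the resulting Gram-type data into an actual embedded map $\bar u$ into $\mathbb{CP}^{m-1}$ is what forces the target dimension $m$ to be taken large ($m = O(d^2)$) and requires a partially complexified Nash embedding theorem (Proposition \ref{gadc}), proved by adapting Nash's iteration \cite{nash}. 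By contrast, the step sketched here — the passage from $(u,V)$ to $F$ via Seeley's theorem — is soft; the only things to watch are the $1$-jet consistency check (automatic from the relations) and the bookkeeping needed to keep the extension positive, homogeneous, and phase-invariant.
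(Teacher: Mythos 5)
Your argument follows the same chain as the paper: reduce Theorem \ref{main-2} to Theorem \ref{main-3} (a discretely self-similar pair $(u,V)$ obeying the relations \eqref{v1}--\eqref{v4} and the embedding/immersion hypotheses), form the $\C^\times$-saturation $M$ of the image of $u$ (indeed $(d+3)$-dimensional, as you say; the paper's ``$(d+2)$-dimensional'' is a slip), prescribe on $M$ the $1$-jet $(\hat F,\hat\Phi)$ forced by homogeneity and the NLS equation, check tangential consistency against \eqref{v1}--\eqref{v4}, and extend. Where you deviate is the last step. The paper divides out by $\C^\times$ \emph{before} extending: it normalizes to data on $\theta(H_d/T^\Z)\subset\mathbf{CP}^{m-1}$, extends there, cuts off, and re-homogenizes by multiplying by $\|v\|_{\C^m}^{p+1}$ at the very end, so homogeneity and phase-rotation-invariance hold by construction. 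You extend first and then \emph{restore} homogeneity and phase-invariance by circle-averaging and multiplying by $\|v\|_{\C^m}^{p+1}$. This is fine, but it leaves something you announce as ``a direct computation'' without doing: that the averaging and rescaling do not perturb the $1$-jet along $M$. It does work out --- at $v_0\in M\cap S^{2m-1}$ the product rule gives $dF(v_0)(w)=\langle\hat\Phi(v_0),w\rangle_{\C^m}+\langle v_0,w\rangle_{\C^m}\bigl((p+1)\hat F(v_0)-\langle\hat\Phi(v_0),v_0\rangle_{\C^m}\bigr)$, and the parenthesis vanishes precisely because Euler's identity and \eqref{v1} give $\langle\hat\Phi(v_0),v_0\rangle_{\C^m}=(p+1)\hat F(v_0)$ --- but you should spell this out. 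It is also what certifies that your $\hat F(\mu u(z))=|\mu|^{p+1}V(z)$ is the correct normalization; note in passing that the paper's \eqref{faz}, \eqref{f0} carry an extraneous factor of $1/(p+1)$, which is inconsistent with $V=F(u)$ and homogeneity (and would break this cancellation), so your version is the one that closes.

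One further imprecision is worth fixing. Seeley's theorem extends a smooth function across the boundary hyperplane of a half-space; it does not, by itself, extend a function with prescribed gradient off a positive-codimension submanifold $M$ into an ambient neighbourhood. The paper uses Seeley only to push $\theta(H_d/T^\Z)$ and the scalar data on it slightly past the boundary $\{t=0\}$, producing a boundaryless submanifold, and then uses Fermi normal coordinates (a tubular-neighbourhood construction in the Fubini--Study metric) to extend outward in the normal directions with $d F^{(3)}_0 = F^{(2)}_1$ on the submanifold. Your sketch needs both ingredients: Seeley gets you across $\partial M$, the tubular-neighbourhood step gets you out of $M$. With that added, your argument is complete and is essentially the paper's, modulo the different (and equally valid) mechanism for enforcing homogeneity and phase-invariance at the end.
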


A key point here is that $u$ is smooth all the way up to the boundary of the region $H_d$ (except at the spacetime origin $(0,0)$), rather than merely being smooth in the interior.  The exponent $-\frac{2}{p-1}$ is mandated by dimensional analysis considerations; the phase shift $\alpha$ is needed for more technical reasons, representing a ``total charge'' coming from the non-zero momentum density.  It would be natural to consider solutions that are continuously self-similar in the sense that 
$$  u(\lambda^2 t, \lambda x ) = \lambda^{-\frac{2}{p-1} + i \frac{\alpha}{\log 2}} u(t,x) $$
for all $\lambda > 0$ (not just powers of two), but we were unable to construct such a solution.  In the analogous situation for the NLW, such continuously self-similar solutions can be ruled out by \emph{ad hoc} methods for some ranges of $d,p$, as was shown in \cite[Proposition 2.2]{tao-nlw}.  

Let us assume Theorem \ref{main-2} for the moment, and show how it implies Theorem \ref{main}.  Let $F, u$ be as in Theorem \ref{main-2}.   Since $u$ is smooth and non-zero on the compact region $\{ (t,x) \in H_d: 1 \leq \rho \leq 2 \}$, it is bounded from below in this region.  By replacing $u$ with $Cu$ and $F$ with $v \mapsto C^2 F(v/C)$ for some large constant $C$, we may thus assume that
$$ \| u(t,x) \|_{\C^m} \geq 1$$
whenever $(t,x) \in H_d$ with $1 \leq \rho \leq 2 $.  Using the discrete self-similarity property \eqref{uts}, we then have this bound whenever $\rho \leq 2$; in fact we have a lower bound on $\|u(t,x)\|_{\C^m}$ that goes to infinity as $(t,x) \to 0$, ensuring in particular that $\|u(t)\|_{L^\infty(\R^d)}$ goes to infinity as $t \to 0$.

Using a smooth cutoff function, one can find a smooth defocusing potential $F_1 \colon \R^m \to \R$ that is phase-rotation-invariant and agrees with $F$ in the region $\{ v \in \C^m: \|v\|_{\C^m} \geq 1 \}$.  Then $u$ solves \eqref{nls} with this potential in the truncated region $\{ (t,x) \in H_d: \rho \leq 2 \}$, and in particular in the region $\{ (t,x) \in H_d: t, |x| \leq 1 \}$.  Next, let $\varphi: \R^d \to [0,1]$ be a smooth function supported on the ball $\{ x \in \R^d: |x| \leq 1 \}$ that equals one on $\{ x \in \R^d: |x| \leq \frac{1}{2}$, and define the functions $\tilde u: [0,1) \times \R^d \to \C^m$, $\tilde F: \C^m \to \R$, $\tilde G: [0,1) \times \R^d \to \C^m$,  by the formulae
\begin{align*}
\tilde u(t,x) &\coloneqq \overline{u}(1-t, x) \varphi(x) \\
\tilde F(v) &\coloneqq F_1(\overline{v}) \\
\tilde G(t,x) &\coloneqq i \partial_t \tilde u(t,x) + \Delta \tilde u(t,x) - \tilde F(\tilde u(t,x)).
\end{align*}
It is clear that $\tilde F$ is a smooth defocusing potential that is phase-rotation-invariant and homogeneous of degree $p+1$ outside of the unit ball, while $\tilde u, \tilde G$ are smooth functions supported on the regions $\{ (t,x) \in [0,1) \times \R^d: |x| \leq 1 \}$ and $\{ (t,x) \in [0,1) \times \R^d: \frac{1}{2} \leq |x| \leq 1 \}$, with $\|\tilde u(t) \|_{L^\infty}$ going to infinity as $t \to 1$.  This gives Theorem \ref{main} (with $u, F, G$ replaced by $\tilde u, \tilde F, \tilde G$ respectively).

It remains to prove Theorem \ref{main-2}.  This will be the focus of the remaining sections of the paper.  We remark that with the reduction to Theorem \ref{main-2}, we have effectively ``compactified'' spacetime, as the discretely self-similar solution can be viewed as a solution (interpreted geometrically as a section of an appropriate vector bundle) on the smooth compact manifold with boundary $H_d/T^\Z$.

\section{Eliminating the potential}\label{pot}

We now exploit the freedom to select the defocusing potential $F$ from Theorem \ref{main-2} by eliminating it from the equations of motion.  To motivate this elimination, let us formally manipulate the equation
$$ i \partial_t u + \Delta u = (\nabla_{\C^m} F)(u),$$
where $F$ is assumed to be defocusing, phase-rotation-invariant, and homogeneous of order $p+1$, in order to derive equations that do not explicitly involve $F$. 

From \eqref{euler}, \eqref{variant} we have the identities
\begin{equation}\label{v1}
 \langle i \partial_t u + \Delta u, u \rangle_{\C^m} = (p+1) V
\end{equation}
and
\begin{equation}\label{v2}
 \langle i \partial_t u + \Delta u, iu \rangle_{\C^m} = 0
\end{equation}
where we define the \emph{potential energy density} $V$ by
$$ V \coloneqq F(u).$$
Note that the defocusing nature of $F$ makes $V$ non-negative.  From \eqref{drv} and the chain rule we also have the additional identities
\begin{equation}\label{v3}
 \langle i \partial_t u + \Delta u, \partial_{x_j} u \rangle_{\C^m} = \partial_{x_j} V
\end{equation}
and
\begin{equation}\label{v4}
 \langle i \partial_t u + \Delta u, \partial_t u \rangle_{\C^m} = \partial_t V
\end{equation}
for $j=1,\dots,d$.  We have thus obtained $d+3$ equations involving the fields $u,V$ that do not directly involve the nonlinearity $F$.  

\begin{remark}\label{rapid} The equations \eqref{v1}-\eqref{v4} are closely related to the usual conservation laws for the nonlinear Schr\"odinger equation.  Indeed, if we define the pseudo-stress-energy-tensor
\begin{align*}
T_{00} &\coloneqq \| u \|_{\C^m}^2 \\
T_{0j} = T_{j0} &\coloneqq 2 \langle \partial_{x_j} u, iu \rangle_{\C^m} \\
T_{jk} &\coloneqq 4 \langle \partial_{x_j} u, \partial_{x_k} u\rangle_{\C^m} + \delta_{jk} 2(p-1) V - \delta_{jk} \Delta(\|u\|_{\C^m}^2) 
\end{align*}
for $j=1,\dots,d$, where $\delta_{jk}$ is the Kronecker delta, and also define the energy density
$$ E \coloneqq \frac{1}{2} \langle \partial_{x_j} u, \partial_{x_j} u \rangle_{\C^m} + V$$
(with the usual summation conventions) and energy current
$$ J_j \coloneqq - \langle \partial_{x_j} u, \partial_t u \rangle_{\C^m},$$
for $j=1,\dots,k$, then one can easily use \eqref{v2} to deduce the mass conservation law
$$ \partial_t T_{00} + \partial_{x_j} T_{j0} = 0$$
and similarly use \eqref{v1}, \eqref{v3} to deduce the momentum conservation law
$$ \partial_t T_{0k} + \partial_{x_j} T_{jk} = 0$$
for $k=1,\dots,d$.
From \eqref{v1}, \eqref{v4} we can also obtain the energy conservation law
$$ \partial_t E + \partial_{x_j} J_j = 0.$$
Finally, we can rewrite \eqref{v1} in a way that does not explicitly involve second derivatives of $u$ as
\begin{equation}\label{v0}
 \langle i u_t, u \rangle_{\C^m} + \frac{1}{2} \Delta T_{00} - \langle \partial_{x_j} u, \partial_{x_j} u \rangle_{\C^m} = (p+1) V
\end{equation}
Conversely, if we take \eqref{v0} as a definition of the potential energy density $V$, then the above conservation laws can be used to recover \eqref{v2}, \eqref{v3}, \eqref{v4}.  
\end{remark}

Now assume that $u$ obeys the discrete self-similarity hypothesis \eqref{uts} and is nowhere vanishing.  We recall that the complex projective space $\mathbf{CP}^{m-1}$ is the quotient space
$$ \mathbf{CP}^{m-1} \coloneqq (\C^m \backslash \{0\}) / \C^\times$$
of the manifold\footnote{For the purpose of defining tangent spaces, cotangent spaces, differentials, etc., we will view spaces such as $\C^m \backslash \{0\}$ as real manifolds (of dimension $2m$) rather than complex manifolds, although we will certainly also use the complex structure.} $\C^m \backslash \{0\}$ by the action of the multiplicative complex group $\C^\times = \C \backslash \{0\}$ by scalar multiplication.  Let $\pi\colon \C^m \backslash \{0\} \to \mathbf{CP}^{m-1}$ be the projection map; then $\pi \circ u\colon H_d \to \mathbf{CP}^{m-1}$ is a smooth map which is invariant under the action of $T^\Z$, and thus descends to a smooth map $\theta\colon H_d/T^\Z \to \mathbf{CP}^{m-1}$.  We will derive Theorem \ref{main-2} from

\begin{theorem}[Second reduction]\label{main-3}   There exists a smooth nowhere vanishing function $u\colon H_d \to \C^m \backslash \{0\}$ which is discretely self-similar in the sense of \eqref{uts} for some $\alpha \in \R$, and a smooth function $V\colon H_d \to \R$ such that the defocusing property 
\begin{equation}\label{pos}
V > 0
\end{equation}
and the equations of motion \eqref{v1}, \eqref{v2}, \eqref{v3}, \eqref{v4} hold on all of $H_d$.  Furthermore, the map $\theta\colon H_d/T^\Z \to \mathbf{CP}^{m-1}$ defined above is a smooth embedding, that is to say that it is injective and immersed in the sense that the $d+1$ derivatives $\partial_t \theta(t,x), \partial_{x_1} \theta(t,x), \dots, \partial_{x_d}(t,x)$ are linearly independent in the tangent space of $\mathbf{CP}^{m-1}$ at $\theta(t,x)$ for all $(t,x) \in H_d$.
\end{theorem}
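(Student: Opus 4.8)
The plan is to eliminate the field $u$ in favour of its \emph{Gram-type data}, reducing Theorem~\ref{main-3} to a purely ``scalar'' problem of constructing finitely many fields on the compact quotient $H_d/T^\Z$ that satisfy a system of algebraic constraints, conservation laws, and positivity hypotheses, and then to reconstruct $u$ from that data by a suitably adapted Nash embedding theorem. The existence of the scalar data is what is delivered by the subsequent reductions, Theorems~\ref{main-4}--\ref{main-8} and the explicit construction of Section~\ref{conclude}; the content of the present step is the passage in both directions between $u$ and its Gram data.

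First I would set up that translation. Given a smooth $u\colon H_d\to\C^m\setminus\{0\}$, form the symmetric matrix field $G[u,u]$ with entries $\langle D_1u,D_2u\rangle_{\C^m}$, $D_1,D_2$ ranging over the $2d+4$ operators $1,i,\partial_{x_1},\dots,\partial_{x_d},\partial_t,i\partial_{x_1},\dots,i\partial_{x_d},i\partial_t$, together with the scalar $V=F(u)$. The entries are not free: besides symmetry and positive semidefiniteness one has the phase identity \eqref{variant} (forcing $\langle u,iu\rangle_{\C^m}=0$ and the rank-type relations coming from $\C$-linearity of the $D$'s), the Leibniz identities such as $\partial_{x_j}\langle u,u\rangle_{\C^m}=2\langle u,\partial_{x_j}u\rangle_{\C^m}$ and the curvature-type identities relating $\langle i\partial_{x_j}u,\partial_{x_k}u\rangle_{\C^m}$ to derivatives of $\langle iu,\partial_\bullet u\rangle_{\C^m}$, and the discrete self-similarity \eqref{uts}, which becomes a homogeneity relation for $G[u,u]$ under pullback by $T$. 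Crucially, as indicated in Remark~\ref{rapid}, the equations of motion \eqref{v1}--\eqref{v4} become \emph{first-order} relations among the entries of $G$, the field $V$, and their derivatives. Conversely — and this is the claim to be proved — any matrix field $G$ and scalar $V>0$ obeying exactly this list of constraints, together with a strict positive-definiteness of an appropriate sub-block (needed for the immersion property below), arises as $G[u,u]$ for some smooth $u$ solving \eqref{v1}--\eqref{v4}.

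The reconstruction of $u$ from $(G,V)$ is the heart of the matter and is where $m$ must be taken large. It is a Nash-type isometric embedding problem: the diagonal entry $g_{1,1}=\|u\|_{\C^m}^2$, the gradient block $(\langle\partial_iu,\partial_ju\rangle_{\C^m})$, and the blocks involving the operators $iD$ prescribe a Riemannian-type structure on $H_d/T^\Z$ that the map $u$ must realise, \emph{compatibly with the complex structure of $\C^m$} — the map must send $i\partial_ju$ to $i$ times $\partial_ju$ in the ambient space. A purely real Nash embedding does not see this complex structure, nor does it come with equivariance under the scaling group $T^\Z$ or with control up to the boundary face $\{t=0\}$; so one must reopen Nash's proof — the hard implicit function/Newton scheme with loss of derivatives, the smoothing, and the reduction to a short ``free'' map followed by iterative corrections — and run it in this partially complexified, equivariant, manifold-with-boundary setting. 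I would carry out the whole construction downstairs on the compact manifold with boundary $H_d/T^\Z$, where compactness makes the iteration converge; the lift back to $H_d$, normalised to carry the homogeneity exponent $-\tfrac{2}{p-1}$, is then $T$-equivariant up to a constant scalar which the normalisation reduces to a phase $e^{i\alpha}$. This adapted embedding theorem is exactly what is isolated as Proposition~\ref{gadc} and proved in Appendix~\ref{gad-app}, and its hypotheses are arranged so that $m=O(d^2)$ suffices.

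The remaining assertions are then read off. Nowhere-vanishing of $u$ is immediate since $\|u\|_{\C^m}^2=g_{1,1}$ is a diagonal entry of a positive-definite matrix. For $\theta\colon H_d/T^\Z\to\mathbf{CP}^{m-1}$: the immersion condition, linear independence of $\partial_t\theta,\partial_{x_1}\theta,\dots,\partial_{x_d}\theta$, is equivalent to linear independence over $\R$ of $u,iu,\partial_{x_1}u,\dots,\partial_{x_d}u,\partial_tu$ in $\C^m$, i.e. to strict positive-definiteness of the $(d+3)\times(d+3)$ principal submatrix of $G$ (equivalently, of the Schur complement of the $\C u$-direction inside the derivative block), which is one of the positivity hypotheses imposed on $G$; injectivity of $\theta$ is a global, generic condition for a smooth map from a $(d+1)$-dimensional compact manifold with boundary into $\mathbf{CP}^{m-1}$ of real dimension $2m-2$, and is part of the conclusion of the embedding theorem (or, failing that, arranged by a small equivariant perturbation preserving the constraints, possible once $m$ exceeds a bound linear in $d$). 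The one genuine obstacle is Proposition~\ref{gadc} — the complexified, equivariant, boundary-controlled Nash embedding — whereas the algebra of Gram-matrix constraints, the verification that \eqref{v1}--\eqref{v4} become first-order relations, and the smoothness and phase bookkeeping are routine though lengthy.
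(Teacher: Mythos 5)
Your proposal accurately captures the paper's strategy: Theorem~\ref{main-3} is obtained by eliminating $u$ in favour of the Gram-type matrix $G[u,u]$ and $V$ (using Remark~\ref{rapid} to cast \eqref{v1}--\eqref{v4} as first-order conservation-law constraints on those fields), reducing to Theorem~\ref{main-4}, and then reconstructing $u$ from $(G,V)$ via the partially complexified, $T^\Z$-equivariant Nash-type embedding theorem isolated as Proposition~\ref{gadc} and proved in Appendix~\ref{gad-app}; your identification of the immersion condition with positive-definiteness of the $(d+3)\times(d+3)$ Gram block spanned by $u,iu,\partial_{x_1}u,\dots,\partial_{x_d}u,\partial_tu$ is also correct. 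The only slight divergence is your fallback of securing injectivity of $\theta$ by a generic equivariant perturbation — in the paper this is unnecessary, as Proposition~\ref{gadc} directly yields an embedding by pairing a free map with a Nash-embedded short map — but this does not affect the validity of the approach.
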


Let us assume Theorem \ref{main-3} for now and see how it implies Theorem \ref{main-2}.  Let $d, p, m, u, V, \theta$ be as in Theorem \ref{main-3}.  To prove Theorem \ref{main-2}, it will suffice to produce a defocusing potential $F: \C^m \to \R$, phase-rotation-invariant and homogeneous of degree $p+1$, such that the identity
\begin{equation}\label{itu}
 i \partial_t u + \Delta u = (\nabla_{\C^m} F)(u)
\end{equation}
holds on all of $H_d$.  Since $u$ never vanishes, we can of course remove the origin $0$ from the domain of $F$, working instead on the  manifold $\C^m \backslash \{0\}$.

We now consider the subset $\Gamma$ of $\C^m \backslash \{0\}$ defined by
$$ \Gamma \coloneqq \{ z u(t,x): (t,x) \in H_d; \quad z \in \C^\times \}$$
or equivalently
$$ \Gamma = \pi^{-1} ( \theta( H_d/T^\Z ) ).$$
This is a $(d+2)$-dimensional $\C^\times$-invariant smooth submanifold (with boundary) of $\C^m \backslash \{0\}$.  The values of the potential $F$ and its gradient $\nabla_{\C^m} F$ on $\Gamma$ are determined by the data $u,V$.  Indeed, if $F$ was phase-rotation-invariant, homogeneous of degree $p+1$, and obeyed \eqref{itu}, then from \eqref{euler}, \eqref{v1} and homogeneity we must have
\begin{equation}\label{faz}
 F( z u(t,x) ) = \frac{|z|^{p+1}}{p+1} V(t,x)
\end{equation}
and
\begin{equation}\label{faz-2}
 (\nabla_{\C^m} F)( z u(t,x) ) = |z|^{p-1} z (i \partial_t u(t,x) + \Delta u(t,x))
\end{equation}
for all $(t,x) \in H_d$ and $z \in \C^\times$.  Conversely, if we can locate a defocusing potential $F$ that is phase-rotation-invariant, homogeneous of degree $p+1$, and obeys the identities \eqref{faz}, \eqref{faz-2} on $\Gamma$, then we of course have \eqref{itu} after specialising \eqref{faz-2} to the case $z=1$.

It remains to construct such an $F$.  In view of the constraints \eqref{faz}, \eqref{faz-2}, it is natural to introduce the functions $F_0: \Gamma \to \R$ and $F_1: \Gamma \to \C^m$ by the formulae
\begin{equation}\label{f0}
 F_0( z u(t,x) ) \coloneqq \frac{|z|^{p+1}}{p+1} V(t,x)
\end{equation}
and
\begin{equation}\label{f1}
 F_1( z u(t,x) ) \coloneqq |z|^{p-1} z (i \partial_t u(t,x) + \Delta u(t,x))
\end{equation}
for all $(t,x) \in H_d$ and $z \in \C^\times$.  As we are assuming $\theta$ to be injective, we see that $z u(t,x) = z' u(t',x')$ occurs if and only if $(t',x') = T^n (t,x)$ and $z' = 2^{\frac{2}{p-1}n} z$ for some integer $n$.  On the other hand, from \eqref{uts}, \eqref{v1} we have
$$ V(T^n(t,x)) = 2^{-\frac{2(p+1)}{p-1}n} V(t,x)$$
and similarly from \eqref{uts} we have
$$ i \partial_t u(T^n(t,x)) + \Delta u(T^n(t,x))) = 2^{-\frac{2p}{p-1}n} (i \partial_t u(t,x) + \Delta u(t,x))$$
and so we see that the functions $F_0, F_1$ are well defined.  As $\theta$ is also a smooth embedding, the functions $F_0,F_1$ are also smooth on $\Gamma$; from \eqref{pos} we know that $F_0$ is strictly positive.  By construction we clearly have the homogeneity relations
\begin{equation}\label{homog}
 F_0( z v ) = |z|^{p+1} F_0(v)
\end{equation}
and
$$ F_1(z v) = |z|^{p-1} z F_1(v)$$
for all $v \in \Gamma$ and $z \in \C^\times$.  Our task is to extend $F_0: \Gamma \to \R$ to a defocusing potential $F: \C^m \backslash \{0\} \to \R$ that continues to obey the relation \eqref{homog}, and such that $\nabla_{\C^m} F$ agrees with $F_1$ on $\Gamma$.

At any given point $z u(t,x)$ of $\Gamma$, the tangent space $T_{z u(t,x)} \Gamma$ is spanned (as a real vector space) by the vectors $z u(t,x)$, $iz u(t,x)$, $z \partial_t u(t,x)$, and $z \partial_{x_j} u(t,x)$ for $j=1,\dots,d$.  From \eqref{v1}, \eqref{v2}, \eqref{v3}, \eqref{v4}, \eqref{f0}, \eqref{f1} and linearity, we conclude the identity
\begin{equation}\label{dfw}
 dF_0(v)(w) = \langle F_1(v), w \rangle_{\C^m}
\end{equation}
for any $v \in \Gamma$ and $w \in T_v \Gamma$, where $dF_0(v) \in T_v^* \Gamma$ is the differential of $F_0$ at $v$, or equivalently $dF_0(v)(w)$ is the directional derivative of $F_0$ at $v$ along the tangent vector $w$.  To put it another way, if we use the inner product $\langle,\rangle_{\C^m}$ to identify $\C^m$ with the dual space $(\C^m)^* = T_v^* \C^m$ (viewed as real vector spaces), then $dF_0(v)$ is the projection of $F_1(v)$ to $T_v^* \Gamma$ (using the dual of the inclusion map from $T_v \Gamma$ to $T_v \C^m$).

It will be convenient to normalise out the homogeneity on $F, F_0, F_1$.  Define the normalised functions $F^{(1)}_0: \Gamma \to \R$ and $F^{(1)}_1: \Gamma \to \C^m$ by the formulae
$$ F^{(1)}_0(v) \coloneqq \|v\|_{\C^m}^{-p-1} F_0(v)$$
and
$$ F^{(1)}_1(v) \coloneqq \|v\|_{\C^m}^{-p-1} F_1(v) - (p+1) \|v\|_{\C^m}^{-p-3} F_0(v) v.$$
Then $F^{(1)}_0, F^{(1)}_1$ are smooth, with the homogeneity relations
$$
F^{(1)}_0( z v ) = F^{(1)}_0(v)
$$
and
$$ F^{(1)}_1(z v) = |z|^{-2} z F^{(1)}_1(v)$$
for all $v \in \Gamma$ and $z \in \C^\times$; also, from \eqref{dfw} and the product rule we see that
\begin{equation}\label{af}
 dF^{(1)}_0(v)(w) = \langle F^{(1)}_1(v), w \rangle_{\C^m}
\end{equation}
for any $v \in \Gamma$ and $w \in T_v \Gamma$.  Finally, $F^{(1)}_0$ is clearly everywhere positive.

Since $F^{(1)}_0: \Gamma \to \R$ is invariant under the action of $\C^\times$, it descends to a smooth positive function $F^{(2)}_0: \theta(H_d/T^\Z) \to \R$ on the quotient space $\Gamma/\C^\times = \theta(H_d/T^\Z)$, thus
$$ F^{(2)}_0( \pi(v) ) = F^{(1)}_0(v)$$
for all $v \in \Gamma$.  For any $v \in \Gamma$, we define the covector $F^{(2)}_1(\pi(v)) \in T^*_{\pi(v)} \mathbf{CP}^{m-1}$ by the formula
$$ F^{(2)}_1(\pi(v))(\pi_{*,v}(w)) \coloneqq \langle F^{(1)}_1(v), w \rangle_{\C^m}$$
for all $w \in T_v \C^m \equiv \C^m$, where $\pi_{*,v}: T_v \C^m \to T_{\pi(v)} \mathbf{CP}^{m-1}$ is the projection map.  Note from \eqref{af} and the $\C^\times$-invariance of $F^{(1)}_0$ that $F^{(1)}_1(v)$ is orthogonal to the kernel of $\pi_{*,v}$; this and the homogeneity of $F^{(1)}_0, F^{(1)}_1$ ensure that $F^{(2)}_1$ is well defined and smooth on $\pi(\Gamma) = \theta(H_d/T^\Z)$. From \eqref{af} we see that
\begin{equation}\label{foop}
 dF^{(2)}_0(\tilde v)(\tilde w) = F^{(2)}_1(\tilde v)(\tilde w)
\end{equation}
for all $\tilde v \in \theta(H_d/T^\Z)$ and $\tilde w \in T_{\tilde v} \theta(H_d/T^\Z)$; in other words, $F^{(2)}_1$ agrees with $dF^{(2)}_0$ at any point $\tilde v$ on the compact manifold with boundary $\theta(H_d/T^\Z)$, after restricting to the tangent space $T_{\tilde v} \theta(H_d/T^\Z)$ of that manifold.

One can view $H_d/T^\Z$ as a smooth compact submanifold (with smooth boundary) of $(\R \times \R^d \backslash \{0,0\})/T^\Z$.  The function $\theta: H_d/T^\Z \to \mathbf{CP}^{m-1}$ can be extended smoothly to an open neighbourhood of this submanifold using a classical theorem of Seeley \cite{seeley}; the embedded copy $\theta(H_d/T^\Z)$ of $H_d/T^\Z$ in $\mathbf{CP}^{m-1}$ can then similarly be extended to a slightly larger open manifold of the same dimension $d+1$.  A further application of Seeley's theorem allows one to smoothly extend $F^{(2)}_0$ to this enlargement of $\theta(H_d/T^\Z)$.  Using this extension as well as \eqref{foop} and Fermi normal coordinates (using for instance the Fubini-Study metric on $\mathbf{CP}^{m-1}$), one can then obtain a smooth extension $F^{(3)}_0$ of $F^{(2)}_0$ to an open neighbourhood $U$ of the embedded copy $\theta(H_d/T^\Z)$ of $H_d/T^\Z$ in $\mathbf{CP}^{m-1}$ in such a fashion that $dF^{(3)}_0 = F^{(2)}_1$ on $\theta(H_d/T^\Z)$.  By shrinking $U$ if necessary one can ensure that $F^{(3)}_0$ is positive on all of $U$.  If one then sets $F^{(4)}_0: \mathbf{CP}^{m-1} \to \R$ to be the function defined by
$$ F^{(4)}_0 \coloneqq \varphi F^{(3)}_0 + (1-\varphi)$$
for some smooth cutoff $\varphi: \mathbf{CP}^{m-1} \to [0,1]$ that is supported on $U$ that equals $1$ on a neighbourhood of $\theta(H_d/T^\Z)$, we see that $F^{(4)}_0: \mathbf{CP}^{m-1} \to \R$ is a positive smooth extension of $F^{(2)}_0$ such that $dF^{(4)}_0 = F^{(2)}_1$ on $\theta(H_d/T^\Z)$.

If we now set $F: \C^m \backslash \{0\} \to \R$ to be the function
$$ F(v) \coloneqq \|v\|_{\C^m}^{p+1} F^{(4)}_0(\pi(v))$$
then $F$ is a defocusing potential that is phase-rotation-invariant and homogeneous of degree $p+1$.  By construction, $F$ agrees with $F_0$ on $\Gamma$, and 
$$ d(\|v\|_{\C^m}^{-p-1} F)(v)(w) = \langle F^{(1)}_1(v), w \rangle_{\C^m}$$
for all $v \in \Gamma$ and $w \in T_v \C^m \equiv \C^m$.  By the product rule and construction of $F^{(1)}_1$, this implies that
$$ dF(v)(w) = \langle F_1(v), w \rangle_{\C^m}$$
for all $v \in \Gamma$ and $w \in T_v \C^m$, and thus
$$ \nabla_{\C^m} F = F_1$$
on $\Gamma$, as desired.

It remains to establish Theorem \ref{main-3}.
This will be the focus of the remaining sections of the paper.

\section{Eliminating the field}

In view of Remark \ref{rapid}, the constraints \eqref{v1}-\eqref{v4} that need to be satisfied in Theorem \ref{main-3} can be expressed in terms of the pseudo-stress-energy tensor $T_{00}, T_{0j}, T_{jk}$, as well as the energy density $E$ and the energy current $J_j$.  These quantities in turn depend linearly on the potential energy density $V$ and the components of the $(2d+4) \times (2d+4)$ Gram-type matrix $G[u,u]$, where we define
\begin{equation}\label{gram-type}
G[u,v] \coloneqq ( \langle D_1 u, D_2 v \rangle_{\C^m} )_{D_1,D_2 \in {\mathcal D}},
\end{equation}
for any smooth $u,v: H_d \to \C^m$, where ${\mathcal D}$ is the finite set of differential operators
$$ {\mathcal D} \coloneqq \{ 1, i, \partial_{x_1}, \dots, \partial_{x_d}, \partial_t, i \partial_{x_1}, \dots, i \partial_{x_d}, i \partial_t \}.$$
For our later arguments, it will be crucial to observe that the component $\langle \partial_t u, \partial_t u \rangle_{\C^m} = \langle i \partial_t u, i \partial_t u \rangle_{\C^m}$ of the  Gram-type matrix $G[u,u]$ is \emph{not} used to determine the quantities $T_{00}, T_{0j}, T_{jk}, E, J_j$, and in particular will be allowed to be extremely large compared to the other components of this matrix.

\begin{table}[ht]
\caption{The parabolic order $\operatorname{ord}(D)$ of various differential operators $D$ (or formal differential operators) used in this paper.  Some of the operators in this table will only be defined in subsequent sections.}\label{order}
\begin{tabular}{|l|l|}
\hline 
Operator $D$ & Parabolic order $\operatorname{ord}(D)$ \\
\hline 
$1, i$ & $0$ \\
$\partial_{x_j}$, $i \partial_{x_j}$, $\partial_r$, $i\partial_r$, $\partial_\omega$ & $1$ \\
$\partial_t$, $i\partial_t$ & $2$\\
\hline
\end{tabular}
\end{table}

As in \cite{tao-nlw}, the strategy of proof of Theorem \ref{main-3} will be to eliminate the role of the field $u$ by reformulating the problem in terms of $V$ and the Gram-type matrix $G[u,u]$ (or on closely related quantities such as $T_{00}, T_{0j}, T_{jk}, E, J_j$).    To do this, it is natural to ask what constraints a $(2d+4) \times (2d+4)$ matrix-valued function $G$ on $H_d$ has to obey in order to be expressible as a Gram-type matrix $G[u,u]$ of a smooth field $u: H_d \to \C^m$ obeying the homogeneity condition \eqref{uts}.  Certainly we will have homogeneity relations of the form
$$ \langle D_1 u(4t, 2x), D_2 u(4t, 2x) \rangle_{\C^m} = 2^{-\frac{4}{p-1}-\operatorname{ord}(D_1)-\operatorname{ord}(D_2)} \langle D_1 u(t, x), D_2 u(t, x) \rangle_{\C^m}$$
where the \emph{parabolic order} $\operatorname{ord}(D)$ of a differential operator $D \in {\mathcal D}$ is defined by Table \ref{order}.  Also, it is clear that the matrix $G[u,u]$ is real symmetric and positive semi-definite, with the additional constraint
\begin{equation}\label{da}
 \langle i D_1 u, i D_2 u \rangle_{\C^m} = \langle D_1 u, D_2 u \rangle_{\C^m}
\end{equation}
for $D_1,D_2 = 1,\partial_{x_1},\dots, \partial_{x_d}, \partial_t$. From the product rule we also have the additional constraints
\begin{equation}\label{da-2}
 \langle u, D_1 u \rangle_{\C^m} = \frac{1}{2} D_1 \langle u, u \rangle_{\C^m}
\end{equation}
and
\begin{equation}\label{bongo}
 D_1 \langle u, iD_2 u \rangle_{\C^m} - D_2 \langle u, iD_1 u \rangle_{\C^m} = 2 \langle D_1 u, iD_2 u \rangle_{\C^m}
\end{equation}
for $D_1,D_2 = \partial_{x_1},\dots, \partial_{x_d}, \partial_t$.
Finally we have
\begin{equation}\label{bango}
 \langle i D_1 u, D_2 u \rangle_{\C^m} = -\langle D_1 u, iD_2 u \rangle_{\C^m}.
\end{equation}
for $D_1,D_2 = 1, \partial_{x_1},\dots, \partial_{x_d}, \partial_t$.
One could then hope that these were essentially the complete list of constraints on the Gram-type matrix $G[u,u]$.  In the real case, in which $u$ takes values in the real Euclidean space $\R^m$ (with the usual inner product $\langle,\rangle_{\R^m}$), and the set of operators ${\mathcal D}$ is reduced to the $d+2$ operators
$$ D_{\R} \coloneqq \{ 1, \partial_{x_1},\dots,\partial_{x_d}, \partial_t \},$$
then one can obtain such a claim using the Nash embedding theorem \cite{nash}:

\begin{proposition}\label{gad}  Let $(G_{D_1,D_2})_{D_1,D_2 \in {\mathcal D}_\R}$ be a $(d+2) \times (d+2)$ matrix of smooth functions $G_{D_1,D_2}: H_d \to \R$ obeying the following hypotheses:
\begin{itemize}
\item[(i)] For each $(t,x) \in H_d$, the matrix $(G_{D_1,D_2}(t,x))_{D_1,D_2 \in {\mathcal D}_\R}$ is symmetric and strictly positive definite.
\item[(ii)] One has the scaling law
\begin{equation}\label{scaling}
 G_{D_1,D_2}(4t, 2x) = 2^{-\frac{4}{p_1} - \operatorname{ord}(D_1) - \operatorname{ord}(D_2)} G_{D_1,D_2}(t,x)
\end{equation}
for all $D_1,D_2 \in {\mathcal D}_\R$ and $(t,x) \in H_d$.
\item[(iii)] We have the identity
\begin{equation}\label{div}
G_{1,D_1}(t,x) = G_{D_1,1}(t,x) = \frac{1}{2} G_{1,1}(t,x)
\end{equation}
for all $(t,x) \in H_d$ and $D_1 \in {\mathcal D}_\R \backslash \{1\}$.
\end{itemize}
Suppose also that $m$ is an integer that is sufficiently large depending on $d$.  Then there exists a smooth function $u: H_d \to \R^m$ that is nowhere vanishing and obeying the discrete self-similarity \eqref{uts} with $\alpha=0$, such that
\begin{equation}\label{g}
 G_{D_1,D_2}(t,x) = \langle D_1 u(t,x), D_2 u(t,x) \rangle_{\R^m}
\end{equation}
for all $D_1,D_2\in {\mathcal D}_\R$ and $(t,x) \in H_d$.  Furthermore, the function $\theta: (t,x) \mapsto \frac{u(t,x)}{\|u(t,x)\|_{\R^m}}$, when descended to the quotient space $H_d/T^\Z$, is a smooth embedding.
\end{proposition}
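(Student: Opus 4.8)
The plan is to recognise Proposition \ref{gad} as the Nash embedding theorem in disguise, the only genuinely new feature being the discrete scaling symmetry, which I would remove by passing to the compact quotient $M \coloneqq H_d/T^\Z$. Since $G_{1,1}$ is strictly positive by hypothesis (i), the function $f \coloneqq G_{1,1}^{1/2}$ is smooth and positive on $H_d$, and any $u$ realising \eqref{g} must satisfy $\|u\|_{\R^m} = f$; conversely, for any such $u$ the off-diagonal relations $\langle u, D u\rangle_{\R^m} = \frac12 D\langle u,u\rangle_{\R^m}$ hold automatically by the Leibniz rule, so hypothesis (iii) --- which is exactly this compatibility for $\|u\|_{\R^m}^2 = G_{1,1}$ --- imposes nothing beyond fixing the norm. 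The problem therefore reduces to finding a smooth map $\theta\colon H_d \to S^{m-1}\subset\R^m$ and putting $u \coloneqq f\theta$: expanding $Du = (Df)\theta + f\,D\theta$ and using $\|\theta\|_{\R^m}\equiv1$ to kill the cross terms $\langle\theta,D\theta\rangle_{\R^m}$, one sees that \eqref{g} for the remaining pairs $D_1,D_2 \in \{\partial_{x_1},\dots,\partial_{x_d},\partial_t\}$ is equivalent to
\[
 \langle D_1\theta, D_2\theta\rangle_{\R^m} = h_{D_1,D_2} \coloneqq f^{-2}\bigl(G_{D_1,D_2} - (D_1 f)(D_2 f)\bigr),
\]
i.e.\ to $\theta$ being an isometric immersion of $H_d$, endowed with the symmetric $2$-tensor $h=(h_{D_1,D_2})$ written out in the $(t,x)$ coordinates, into the round unit sphere.

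The next step is to check that $h$ is a genuine $T$-invariant Riemannian metric. Splitting off the $1$-row and $1$-column of the positive definite matrix in (i) and invoking (iii), one recognises $(h_{D_1,D_2})$ as $f^{-2}$ times the Schur complement of the $(1,1)$ entry, which is again positive definite; thus $h$ is a smooth Riemannian metric on $H_d$. The scaling law (ii), combined with the induced transformation of $f$ and of its first- and second-order derivatives under $T\colon(t,x)\mapsto(4t,2x)$, shows that $h_{D_1,D_2}$ picks up precisely the factor $2^{-\operatorname{ord}(D_1)-\operatorname{ord}(D_2)}$, which is cancelled by the scalings $2$ and $4$ of the coordinate one-forms $dx_j$ and $dt$; hence $h$ is $T$-invariant and descends to a smooth Riemannian metric $\bar g$ on the compact manifold with boundary $M$ (note that $M$ avoids the deleted spacetime origin and that its boundary $\{t=0\}$ is compact; the compactness of $M$ is exactly what makes working on $H_d$ worthwhile).

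The crux is then an appeal to the Nash embedding theorem. I would first extend $(M,\bar g)$ across its boundary $\{t=0\}$ to a slightly larger open Riemannian $(d+1)$-manifold $(\tilde M,\tilde g)$, using Seeley's extension theorem \cite{seeley} to extend $\bar g$ and keeping positivity by continuity, and then isometrically embed $(\tilde M,\tilde g)$ into the round sphere $S^{m-1}$ for $m = m(d)$ large enough --- the sphere-valued form of Nash's theorem is classical, following from the Euclidean form by composing with an isometric embedding of a large Euclidean ball into a high-dimensional round sphere, and no codimension-one obstruction arises since the codimension $m-1-(d+1)$ is enormous. Restricting this embedding to $M$, composing with $\pi\colon H_d \to M$, and setting $u \coloneqq f\cdot(\theta\circ\pi)$, one verifies in turn: $u$ is smooth and nowhere vanishing since $\|u\|_{\R^m}=f>0$; $u\circ T = (f\circ T)\cdot(\theta\circ\pi\circ T) = 2^{-\frac{2}{p-1}} f\cdot(\theta\circ\pi) = 2^{-\frac{2}{p-1}}u$, which is \eqref{uts} with $\alpha=0$; the $(1,1)$ entry, the $(1,D)$ entries and the $(D_1,D_2)$ entries of the Gram-type matrix all agree with $G$, by the computations above together with the isometry identity $\langle D_1(\theta\circ\pi), D_2(\theta\circ\pi)\rangle_{\R^m}=h_{D_1,D_2}$; and $u/\|u\|_{\R^m} = \theta\circ\pi$ descends to $\theta$, a smooth embedding of $M$ because an injective isometric immersion of a compact manifold is an embedding. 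I expect the only real obstacle to be securing Nash's theorem in exactly this shape --- isometric embedding into a sphere, for a manifold with boundary; for this real warm-up with the small operator set ${\mathcal D}_\R$ these are routine adjustments to the classical statement, whereas the complex analogue required later forces one to reprove a ``partially complexified'' version of the Nash embedding theorem (Proposition \ref{gadc}).
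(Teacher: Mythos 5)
Your proposal is correct and follows essentially the same route as the paper's own proof: you reduce to constructing a direction map $\theta$ into the sphere by subtracting the Leibniz-rule contribution from $f=G_{1,1}^{1/2}$, identify the resulting symmetric $2$-tensor as $f^{-2}$ times the Schur complement of the $(1,1)$ entry (which is the paper's $g_{D_1,D_2}$), verify it is $T$-invariant and positive definite, and then apply the Nash embedding theorem on the compact quotient followed by an embedding into a round sphere. The only cosmetic difference is that you handle the boundary of $H_d/T^{\Z}$ explicitly by extending the metric via Seeley before applying Nash, whereas the paper cites Nash/G\"unther directly on the compact manifold with boundary; both are fine, and yours is arguably the safer phrasing.
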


\begin{proof}  Observe from the chain and quotient rules that if $u$ is smooth and obeys \eqref{g}, then $u$ is nowhere vanishing (since $G_{1,1}$ is strictly positive) and  direction map $\theta: (t,x) \mapsto \frac{u(t,x)}{\|u(t,x)\|_{\R^m}}$ obeys the identity
\begin{equation}\label{geo}
  g_{D_1,D_2}(t,x) = \langle D_1 \theta(t,x), D_2 \theta(t,x) \rangle_{\R^m}
\end{equation}
for $(t,x) \in H_d$ and $D_1,D_2 \in {\mathcal D}_\R \backslash \{1\}$, where the functions $g_{D_1,D_2}: H_d \to \R$ are given by the formula
$$ g_{D_1,D_2} \coloneqq \frac{G_{D_1,D_2}}{G_{1,1}} - \frac{G_{1,D_1} G_{1,D_2}}{G_{1,1}^2}.$$
Motivated by this, our strategy will be to construct the direction map $\theta$ obeying \eqref{geo} first, and use this to then reconstruct $u$.

Since $G_{1,1}$ is strictly positive, the $(d+1) \times (d+1)$-matrix $g = (g_{D_1,D_2})_{D_1,D_2 \in {\mathcal D}_\R \backslash \{1\}}$ is smooth and symmetric; from the hypothesis (ii), the matrix $g$ is $T^\Z$-invariant, and can thus (by slight abuse of notation) be viewed as a function on the quotient space $H_d/T^\Z$.  From the identity
$$ \sum_{D_1,D_2 \in {\mathcal D}_\R \backslash \{1\}} g_{D_1,D_2} a_{D_1} a_{D_2} = \sum_{D_1,D_2 \in {\mathcal D}} G_{D_1,D_2} b_{D_1} b_{D_2}$$
for all reals $a_D, D \in {\mathcal D}_\R \backslash \{1\}$, where $b_D \coloneqq \frac{a_D}{G_{1,1}^{1/2}}$ and $b_1 \coloneqq -\frac{\sum_{D \in {\mathcal D}_\R \backslash \{1\}} a_D G_{1,D}}{G_{1,1}^{3/2}}$, and the hypothesis (i), we see that the matrix $g$ is strictly positive.  Thus $(H_d/T^\Z, g)$ can be viewed as a smooth compact $d+1$-dimensional Riemannian manifold with smooth boundary.  If $m_0$ is a large enough integer, we may then apply the Nash embedding theorem (see \cite{nash}, \cite{gunther}) and find a smooth isometric embedding of $(H_d/T^\Z,g)$ into a Euclidean space $\R^{m_0}$.  As observed in \cite[\S 4]{tao-nlw}, any compact region of $\R^{m_0}$ may be isometrically emebdded into the unit sphere $S^{m-1}$ of $\R^m$ if $m \geq 2m_0+2$.  Thus, for $m$ large enough, we may find an isometric embedding $\theta: H_d/T^\Z \to S^{m-1}$ of $(H_d/T^\Z,g)$ into unit sphere $S^{m-1}$ of $\R^m$ (with the induced Euclidean metric); thus $\theta$ is a smooth embedding and obeys the identity \eqref{geo} (after lifting up from $H_d/T^\Z$ to $H_d$).  If one then defines the function $u: H_d \to \R^m$ by the formula
$$ u(t,x) \coloneqq \theta(t,x) G_{1,1}(t,x)^{1/2}$$
then $u$ is smooth, nowhere vanishing, and obeys \eqref{uts} with $\alpha=0$, and from a routine calculation using the product and chain rules (as well as hypothesis (iii)) we have the required identity \eqref{g} for all $D_1,D_2 \in {\mathcal D}_\R \backslash \{1\}$; it is also clear that \eqref{g} holds when $D_1=D_2=1$.  Differentiating the latter identity in space or time using (iii) and the product rule, we obtain the remaining cases of \eqref{g}, and the claim follows.
\end{proof}

One could use the literature on the Nash embedding theorem to extract an explicit value of $m$ as a function of $d$ in the above proposition, but we will not seek to optimise this value here.  For future reference, we observe that the above argument also gives the variant of Proposition \ref{gad} in which the half-space $H_d$ is replaced by the punctured spacetime $\R \times \R^d \backslash \{(0,0)\}$.

In view of the above proposition, one could conjecture a complex analogue of the proposition, in which one uses ${\mathcal D}$ in place of ${\mathcal D}_\R$ and $\C^m$ in place of $\R^m$, with the additional constraints \eqref{bongo}, \eqref{bango} imposed.  This conjecture may well be false in full generality (note that the complex version of the Nash embedding theorem is false, for instance Liouville's theorem prevents compact complex manifolds without boundary from being holomorphically embedded into $\C^m$).  Nevertheless we could adapt the \emph{proof} of the Nash embedding theorem to obtain a partial complex analogue of Proposition \ref{gad}, in which we do not seek to control the $\langle \partial_t u, \partial_t u \rangle_{\C^m}$ component of the Gram-like matrix \eqref{gram-type}, and in which we also have an additional curl-free property of a certain combination of components of this matrix.  While this falls well short of a true complex version of Proposition \ref{gad}, it will suffice for our purposes.  Specifically, we have

\begin{proposition}\label{gadc}  Let $G = (G_{D_1,D_2})_{D_1,D_2 \in {\mathcal D}}$ be a $(2d+4) \times (2d+4)$ matrix of smooth functions $G_{D_1,D_2}: H_d \to \R$ obeying the following hypotheses:
\begin{itemize}
\item[(i)] For each $(t,x) \in H_d$, the matrix $(G_{D_1,D_2}(t,x))_{D_1,D_2 \in {\mathcal D}}$ is symmetric and strictly positive definite.
\item[(ii)] One has the scaling law \eqref{scaling} for all $D_1,D_2 \in {\mathcal D}$ and $(t,x) \in H_d$.
\item[(iii)] We have the identity \eqref{div}, as well as the additional identities
\begin{align}
D_1 G_{1,iD_2}(t,x) - D_2 G_{1,iD_1}(t,x) &= 2G_{D_1,iD_2}(t,x) \label{div-3}
\end{align}
for all $(t,x) \in H_d$ and $D_1,D_2 \in {\mathcal D}_\R \backslash \{1\}$, and
\begin{align}
G_{iD_1,iD_2}(t,x) &= G_{D_1,D_2}(t,x) \label{div-2} \\
G_{D_1,iD_2}(t,x) &= - G_{D_2,iD_1}(t,x) \label{div-4}
\end{align}
for all $(t,x) \in H_d$ and $D_1,D_2 \in {\mathcal D}_\R$ (in particular we have $G_{D_1,iD_1} = 0$).
\item[(iv)]  The vector field $\left(\frac{G_{1, i \partial_{x_j}}}{G_{1,1}}\right)_{j=1}^d$ is curl-free, that is to say
$$ \partial_{x_k} \frac{G_{1, i \partial_{x_j}}}{G_{1,1}}(t,x) - \partial_{x_j} \frac{G_{1, i \partial_{x_k}}}{G_{1,1}}(t,x) = 0 $$
for all $j,k \in 1,\dots,d$ and $(t,x) \in H_d$.
\end{itemize}
Suppose also that $m$ is an integer that is sufficiently large depending on $d$.  Then there exists a smooth function $u: H_d \to \C^m$ that is nowhere vanishing and obeying the discrete self-similarity \eqref{uts} for some $\alpha \in \R$, such that
\begin{equation}\label{gc}
 G_{D_1,D_2}(t,x) = \langle D_1 u(t,x), D_2 u(t,x) \rangle_{\C^m}
\end{equation}
for all $(t,x) \in H_d$ and all $D_1,D_2\in {\mathcal D}$ other than $(D_1,D_2) = (\partial_t, \partial_t), (i \partial_t, i \partial_t)$.
Furthermore, the function $\theta: H_d/T^\Z \to \mathbf{CP}^{m-1}$, formed by descending the map $\pi \circ u: H_d \to \mathbf{CP}^{m-1}$ to $H_d/T^\Z$, is a smooth embedding.
\end{proposition}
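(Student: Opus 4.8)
The plan is to follow the proof of Proposition \ref{gad} up to the passage to a ``direction field'', and then replace the appeal to the Nash embedding theorem by an adaptation of its \emph{proof} that additionally keeps a symplectic quantity under control. Everything is carried out on the compact smooth manifold with boundary $H_d/T^\Z$, with all data smooth up to the boundary $\{t=0\}$.

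\emph{Reduction to a K\"ahler embedding problem.} Following Proposition \ref{gad} I would set $r\coloneqq G_{1,1}^{1/2}$ and seek $u$ in the form $u=rw$ with $\|w(t,x)\|_{\C^m}=1$. Expanding $\langle D_1u,D_2u\rangle_{\C^m}$ and $\langle D_1u,iD_2u\rangle_{\C^m}$ by the product rule and using $\langle w,Dw\rangle_{\C^m}=\tfrac12 D\langle w,w\rangle_{\C^m}=0$, the system \eqref{gc} becomes equivalent, for $D_1,D_2\in{\mathcal D}_\R\setminus\{1\}$, to the gauge identities $\langle w,iDw\rangle_{\C^m}=\nu_D$ with $\nu_D\coloneqq G_{1,iD}/G_{1,1}$, together with identities on the direction field $\theta=\pi\circ w\colon H_d\to\mathbf{CP}^{m-1}$ that amount to prescribing $\theta^*g_{\mathrm{FS}}=\gamma$ and $\theta^*\omega_{\mathrm{FS}}=\sigma$; here $g_{\mathrm{FS}},\omega_{\mathrm{FS}}$ are the Fubini--Study metric and K\"ahler form, $\sigma\coloneqq-d\nu$ with the $1$-form $\nu\coloneqq\sum_{D\in{\mathcal D}_\R\setminus\{1\}}\nu_D\,dy^D$, and $\gamma$ is an explicit positive symmetric tensor built from $G$ (a weighted Schur complement of $G$ with respect to its $\{1,i\}$-block, which by \eqref{div-2}, \eqref{div-4} is $G_{1,1}$ times the identity). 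Three structural points then organise the hypotheses: by hypothesis (i) and \eqref{scaling}, $\gamma$ is a positive definite, $T^\Z$-invariant Riemannian metric on $H_d/T^\Z$, and its $(\partial_t,\partial_t)$-component may be enlarged at will since \eqref{gc} never constrains $\langle\partial_t u,\partial_t u\rangle_{\C^m}$; the two-form $\sigma=-d\nu$ is exact with the $T^\Z$-invariant primitive $-\nu$, hence descends to $H_d/T^\Z$; and hypothesis (iv) is precisely the statement that the $dx^j\wedge dx^k$-components of $-d\nu$ vanish, so that $\sigma$ has only $dx^j\wedge dt$-components. (Identities \eqref{div-2}, \eqref{div-4} are automatic from $\sum_k z_k\overline{w_k}=\langle z,w\rangle_{\C^m}+i\langle z,iw\rangle_{\C^m}$; identities \eqref{div} and \eqref{div-3} are used only at the end, to verify \eqref{gc} on the nose.)

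\emph{The K\"ahler Nash embedding.} The substance, proved in Appendix \ref{gad-app}, is that a compact Riemannian manifold with boundary $(N,\gamma)$ of dimension $d+1$ carrying a closed two-form $\sigma$ of exactly the above shape (exact, of the right parabolic weight, vanishing on the $d$-dimensional spatial distribution) admits, for $m$ large enough, a smooth embedding $\theta\colon N\to\mathbf{CP}^{m-1}$ with $\theta^*g_{\mathrm{FS}}=\gamma$ and $\theta^*\omega_{\mathrm{FS}}=\sigma$. I would prove this in three moves. First, dispose of the two-form: since $\sigma=\sum_j c_j\,dx^j\wedge dt$ with the spatial curl of $(c_j)_j$ vanishing, an explicit map into a $\C^d$-factor (pairing a suitably rescaled $x^j$ against a $t$-primitive of $-c_j$) pulls the flat K\"ahler form back to $\sigma$ while contributing only a controlled symmetric tensor to the induced metric; after enlarging the free $(\partial_t,\partial_t)$-component of $\gamma$, this tensor may be subtracted from $\gamma$, leaving a genuine metric. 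Second, realise that remaining metric by an \emph{isotropic} map into the complementary $\C$-factors, obtained by running the Nash iteration — or, more conveniently, G\"unther's implicit-function-theorem reformulation \cite{gunther} — entirely inside a real subspace $\R^N\subset\C^N$: a short real embedding exists by compactness, and the high-frequency corrugations that kill the successive metric errors can be taken real, hence stay isotropic and never perturb the already-fixed symplectic data. Third, embed a large ball of the resulting flat $\C^{m_0}$ isometrically \emph{and} isosymplectically into $(\mathbf{CP}^{m-1},g_{\mathrm{FS}},\omega_{\mathrm{FS}})$ for $m\gg m_0$, the K\"ahler analogue of the spherical embedding used in \cite[\S4]{tao-nlw}. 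Injectivity (rather than mere immersivity) is arranged by a general-position perturbation once $m$ is large, and immersivity — hence the linear independence of $\partial_t\theta,\partial_{x_1}\theta,\dots,\partial_{x_d}\theta$ demanded in \eqref{gc} — is automatic from positive definiteness of $\gamma$.

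\emph{Reconstructing $u$.} Given such a $\theta$, lift it to a unit-norm $w\colon H_d\to\C^m$ (possible since $H_d$ is contractible, so the pulled-back Hopf bundle is trivial), chosen so that $w\circ T=e^{i\alpha}w$ for a single constant $\alpha$ — this can be arranged because the fundamental domain $\{1\le\rho\le2\}$ is contractible. Multiplying $w$ by a further phase $e^{i\phi}$ with $d\phi=-\nu-a_0$, where $a_0(D)\coloneqq\langle Dw,iw\rangle_{\C^m}$ is the connection $1$-form of the lift — which is closed exactly because $da_0=\theta^*\omega_{\mathrm{FS}}=\sigma=-d\nu$, and $T^\Z$-invariant so $\alpha$ merely shifts by a constant — one may assume the gauge identities $\langle w,iDw\rangle_{\C^m}=\nu_D$ hold. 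Then $u\coloneqq rw$ obeys the discrete self-similarity \eqref{uts}, and \eqref{gc} holds for every required pair: the pairs involving $1$ and the metric pairs $\langle D_1u,D_2u\rangle_{\C^m}$ follow from $\theta^*g_{\mathrm{FS}}=\gamma$, the gauge identities and \eqref{div}, while the pairs $\langle D_1u,iD_2u\rangle_{\C^m}$ follow from $\theta^*\omega_{\mathrm{FS}}=\sigma$, the gauge identities and \eqref{div-3}; the pair $(\partial_t,\partial_t)$ is not required, and the embedding claim for $\theta$ on $H_d/T^\Z$ is exactly what was produced in the previous step. I expect the K\"ahler Nash embedding — holding $\theta^*\omega_{\mathrm{FS}}$ \emph{exactly} equal to $\sigma$ while the metric iteration converges in $C^\infty$ — to be the only genuinely hard point; the reductions surrounding it are bookkeeping, with hypotheses (iii) and (iv) doing precisely the work that makes that bookkeeping consistent.
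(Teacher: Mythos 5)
Your reformulation of the Gram--matrix conditions on $u$ as prescribing $\theta^* g_{\mathrm{FS}}=\gamma$ and $\theta^*\omega_{\mathrm{FS}}=\sigma$, with the Hopf lift phase adjusted so that $\langle dw,iw\rangle$ equals the prescribed gauge one-form, is a legitimate and conceptually appealing repackaging, and it is genuinely different from the framework the paper uses: the paper never passes through $\mathbf{CP}^{m-1}$ with its K\"ahler structure, but constructs $u$ directly in $\C^m$, treating the single scalar $\langle u, i\partial_t u\rangle$ as the only residual constraint (after a gauge transformation, Proposition \ref{gauge}, that kills the spatial $G_{1,i\partial_{x_j}}$), to be supplied by a high-frequency $T^\Z$-equivariant spiral $u_{2,\eps}$ whose $O(\eps^2)$ side-effects are then absorbed by a G\"unther-type contraction.

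The gap that is fatal as written is your third move: ``embed a large ball of the resulting flat $\C^{m_0}$ isometrically and isosymplectically into $(\mathbf{CP}^{m-1},g_{\mathrm{FS}},\omega_{\mathrm{FS}})$'' is not the K\"ahler analogue of the spherical embedding in \cite[\S4]{tao-nlw}. That reference sends a compact set of $\R^{m_0}$ into a flat torus inside $S^{m-1}$; the image of such a torus in $\mathbf{CP}^{m-1}$ is Lagrangian, so $\omega_{\mathrm{FS}}$ pulls back to \emph{zero}, not to $\omega_{\mathrm{flat}}$. Lifted through the Hopf fibration, the embedding you need is a unit-norm $\psi\colon B_R\subset\C^{m_0}\to S^{2m-1}\subset\C^m$ with $\langle d\psi,d\psi\rangle=g_{\mathrm{flat}}+\nu_0\otimes\nu_0$ and $\langle d\psi,i\psi\rangle=\nu_0$, where $\nu_0$ is a primitive of $\omega_{\mathrm{flat}}$; since $\nu_0$ is not closed, $\psi$ cannot be a real map times a phase, and the real construction does not apply. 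This sub-problem is of exactly the same ``sphere map with prescribed connection form'' type as Proposition \ref{gadc} itself, so you have deferred the hard point rather than solved it. (Your first move also has a difficulty: pairing a rescaled $x^j$ against the $t$-primitive $b_j=\int_0^t c_j$ in $z_j=\lambda x_j+i\lambda^{-1}b_j$ contributes $\lambda^{-2}\partial_{x_k}b_j\partial_{x_l}b_j$ to the spatial metric, which no choice of $\lambda$ makes small simultaneously with $\lambda^2 dx_j^2$. The alternative $z=\eps h+it/\eps$ with $d_x h=\sum_j c_j\,dx^j$ does give $O(\eps^2)$ spatial contributions, but is not $T^\Z$-equivariant since $h$ and $t$ scale oppositely, so it cannot descend to $H_d/T^\Z$. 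This is precisely the tension the paper's $u_{2,\eps}$ resolves by summing $T$-translates of a compactly supported spiral.)

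The paper's proof sidesteps the K\"ahler spherical embedding entirely: after Proposition \ref{gauge}, the only off-diagonal symplectic datum is $G_{1,i\partial_t}$, which is realised by the explicit spiral $u_{2,\eps}$; a short real Nash map $u_1$ provided by Proposition \ref{gad} supplies the bulk metric and the embeddedness of $\theta$; and the $O(\eps^2)$ residual in the Gram matrix is killed by a G\"unther contraction for a free map $u_0$, carried out for the $T$-invariant operators $X_j,X_t$ directly on the compact quotient $M$. If you want to push the K\"ahler reformulation through, the place to invest effort is the K\"ahler spherical embedding -- which will in effect require the same oscillatory/perturbative machinery the paper deploys.
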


\begin{remark}  The condition (iv) differs from the other hypotheses in that it is not necessitated by the conclusions of this theorem.  However, this condition turns out to be convenient in the proof of Proposition \ref{gadc}, as it will allow us to ``gauge transform away'' the $G_{1,i\partial_{x_j}}$ components; see Proposition \ref{gauge}.  However, this additional constraint (iv) will end up not being harmful to our argument, because we will eventually reduce to the case where the matrix $G$ is spherically symmetric in the sense that $G_{1,1}(t,x) = g(t,|x|)$ and $G_{1,i\partial_{x_j}}(t,x) = \frac{x_j}{|x|} h(t,|x|)$ for some functions $g,h$, in which case the condition (iv) is automatically satisfied.
\end{remark}

The proof of Proposition \ref{gadc} is rather lengthy, and the methods of proof (based on the proof of the Nash embedding theorem) are not used elsewhere in the paper.  We therefore defer this proof to Appendix \ref{gad-app}.  
Combining Proposition \ref{gadc} with Remark \ref{rapid}, we thus see that Theorem \ref{main-3} will now follow from the following proposition in which the field $u$ has been eliminated.

\begin{theorem}[Third reduction]\label{main-4}   There exists a smooth $(2d+3) \times (2d+3)$ matrix $G = (G_{D_1,D_2})_{D_1,D_2 \in {\mathcal D}}$ of smooth functions $G_{D_1,D_2}: H_d \to \R$ and an additional smooth function $V: H_d \to \R$ obeying the following properties.
\begin{itemize}
\item[(i)] For each $(t,x) \in H_d$, the matrix $(G_{D_1,D_2}(t,x))_{D_1,D_2 \in {\mathcal D}}$ is symmetric and strictly positive definite.
\item[(ii)] One has the scaling law \eqref{scaling} for all $D_1,D_2 \in {\mathcal D}$ and $(t,x) \in H_d$.
\item[(iii)] We have the identities \eqref{div}, \eqref{div-3}
for all $(t,x) \in H_d$ and $D_1,D_2 \in {\mathcal D}_\R \backslash \{1\}$, and \eqref{div-2}, \eqref{div-4} for all $(t,x) \in H_d$ and $D_1,D_2 \in {\mathcal D}_\R$.
\item[(iv)]  The vector $\left(\frac{G_{1, i \partial_{x_j}}}{G_{1,1}}\right)_{j=1}^d$ is curl-free on $H_d$.
\item[(v)]  One has the defocusing property \eqref{pos}.
\item[(vi)]  If one defines the pseudo-stress-energy tensor
\begin{align*}
T_{00} &\coloneqq G_{1,1} \\
T_{0j} = T_{j0} &\coloneqq - 2 G_{i\partial_{x_j}, 1} \\
T_{jk} &\coloneqq 4 G_{\partial_{x_j}, \partial_{x_k}} + \delta_{jk} 2(p-1) V - \delta_{jk} \Delta G_{1,1} 
\end{align*}
for $j,k=1,\dots,d$, as well as the energy density
$$ E \coloneqq \frac{1}{2} G_{\partial_{x_j},\partial_{x_j}} + V $$
(with the usual summation conventions) and energy current
$$ J_j \coloneqq - G_{\partial_{x_j}, \partial_t}$$
for $j=1,\dots,d$, then one has the identity
\begin{equation}\label{potential}
G_{i\partial_t, 1} + \frac{1}{2} \Delta G_{1,1} - G_{\partial_{x_j}, \partial_{x_j}} = (p+1) V 
\end{equation}
and the conservation laws
\begin{align}
\partial_t T_{00} + \partial_{x_j} T_{j0} &= 0 \label{ping-2} \\
\partial_t T_{0k} + \partial_{x_j} T_{jk} &= 0 \label{ping-3} \\
\partial_t E + \partial_{x_j} J_j &= 0 \label{ping-4}
\end{align}
for $k=1,\dots,d$.
\end{itemize}
\end{theorem}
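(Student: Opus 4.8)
The plan is to construct $G$ and $V$ by a sequence of elimination steps, each removing one of the surviving scalar fields, exploiting the gauge freedom that remains after symmetry is imposed. First I would cut down the degrees of freedom by requiring $G$ to obey the parabolic scaling law \eqref{scaling} exactly (consistent with (ii), and collapsing all data to the compact quotient $H_d/T^\Z$) and requiring $G$ and $V$ to be invariant under the natural tensorial action of $\SO(d)$ on the spatial variable and on the operators $\partial_{x_j}, i\partial_{x_j}$. A spherically symmetric matrix of this form is determined by the scalar fields $g_{1,1}, g_{\partial_r,\partial_r}, g_{\partial_\omega,\partial_\omega}, g_{\partial_r,\partial_t}, g_{1,i\partial_r}, g_{1,i\partial_t}, v$ of the one-dimensional variable $(t,r) \in H_1$, together with the field $g_{\partial_t,\partial_t}$, which appears in none of the constraints and which I would take enormous --- this is what makes strict positive-definiteness of the full matrix in (i) easy to arrange, by a Schur-complement argument, once the remaining fields are fixed. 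Under this reduction (iv) becomes automatic, (iii) becomes a short list of identities in $(t,r)$, (v) becomes $v > 0$, the relation \eqref{potential} becomes an Euler-type identity, and the conservation laws of (vi) become the energy law \eqref{energy-cons}, the momentum law $\partial_r(r^{d-1} g_{\partial_r,\partial_r}) = (d-1) r^{d-2} g_{\partial_\omega,\partial_\omega} + S_1$, and the mass law $\partial_t g_{1,1} = 2(\partial_r + \tfrac{d-1}{r}) g_{1,i\partial_r}$.

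Next I would eliminate the scalar fields one at a time. The energy current $g_{\partial_r,\partial_t}$ occurs only in \eqref{energy-cons}, and since in the energy-supercritical range the total energy is infinite, \eqref{energy-cons} can be solved for $g_{\partial_r,\partial_t}$ by a bare spatial integration with no compatibility obstruction, so both the field and the equation drop out. The reduced form of \eqref{potential} similarly defines $g_{1,i\partial_t}$ in terms of the other fields and occurs nowhere else, so it too is removed. An elementary manipulation then absorbs the potential $v$ into the angular stress $g_{\partial_\omega,\partial_\omega}$ and the radial stress $g_{\partial_r,\partial_r}$ --- exploiting that $v$ and $g_{\partial_\omega,\partial_\omega}$ enter interchangeably, just as a centrifugal potential mimics a spherical harmonic --- reducing to the case $v = 0$. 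The momentum law then exhibits $r^{d-1} g_{\partial_r,\partial_r}$ as an explicit radial integral of $(d-1)s^{d-2} g_{\partial_\omega,\partial_\omega} + S_1$; requiring $g_{\partial_r,\partial_r}$ to be smooth up to $t = 0$ forces asymptotic vanishing of the integrand as $r \to 0$, strict positive-definiteness forces one scalar inequality on the integral, and then $g_{\partial_r,\partial_r}$ is eliminated. Since $g_{\partial_\omega,\partial_\omega}$ is now constrained only by nonnegativity and that single integral inequality, it too can be chosen and removed, leaving only $g_{1,1}$ and $g_{1,i\partial_r}$, subject to the mass law and a handful of integral and positivity conditions involving $S_1$. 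These successive reductions are carried out in Theorems \ref{main-5}--\ref{main-8}.

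Finally, the mass law is solved by the ansatz $g_{1,1} = 2 r^{1-d} \partial_r(r^d W)$, $g_{1,i\partial_r} = r^{1-d} \partial_t(r^d W)$ for a single scalar potential $W = W(t,r)$ on $H_1$, and one writes down an explicit parabolically self-similar $W$, smooth up to $t = 0$ away from the origin and carrying enough positivity margin, that satisfies the remaining inequalities (Section \ref{conclude}). Running the chain of reductions backwards then produces the required matrix $G$ and function $V$, completing the proof of Theorem \ref{main-4}.

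I expect the main obstacle to be the two ``global'' requirements that survive every elimination: keeping the full Gram-type matrix strictly positive definite --- it couples all the surviving fields, even though each is introduced with some freedom --- and keeping everything smooth all the way up to the slice $t = 0$ while permitting a genuine singularity only at the spacetime origin $(0,0)$. The real work is verifying that no elimination step destroys positivity or boundary regularity, so that when $W$ is finally chosen explicitly there is still a nonempty set of admissible data; this is where careful bookkeeping of the parabolic homogeneities (Table \ref{order}) and of the $r \to 0$ and $\rho \to 0$ asymptotics is essential. The energy-supercriticality hypothesis \eqref{energy-supercrit} enters precisely at one point: it is exactly what makes \eqref{energy-cons} --- the obstruction that rules out such discretely self-similar solutions in the critical and subcritical cases --- vacuous here, since with infinite total energy the current $g_{\partial_r,\partial_t}$ may transport energy outward at an essentially arbitrary rate.
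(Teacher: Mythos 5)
Your proposal mirrors the paper's own chain of reductions (Theorems~\ref{main-5}--\ref{main-9} and Section~\ref{conclude}) essentially step for step: pass to spherically symmetric, continuously self-similar data with $g_{\partial_t,\partial_t}$ taken huge, eliminate $g_{\partial_r,\partial_t}$ via energy conservation, eliminate $g_{1,i\partial_t}$ via the Euler-type identity, absorb $v$ into the angular and radial stresses, integrate the momentum law to eliminate $g_{\partial_r,\partial_r}$, discard $g_{\partial_\omega,\partial_\omega}$, and close with the potential ansatz $g_{1,1} = 2r^{1-d}\partial_r(r^d W)$, $g_{1,i\partial_r} = r^{1-d}\partial_t(r^d W)$. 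The one slight imprecision worth flagging: the energy and momentum conservation laws are not solvable by ``bare spatial integration with no compatibility obstruction''---the requirement that the resulting fields extend smoothly to $t=0$ does impose a nontrivial ``good'' asymptotic condition (governed by the $r\to\infty$, not $r\to 0$, behavior of the integrand after rescaling), and the paper must perturb the data to enforce it; energy-supercriticality is what makes that perturbation possible, not what makes the constraint vacuous.
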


Note carefully that the components $G_{\partial_t, \partial_t}$, $G_{i\partial_t, i \partial_t}$ of $G$ are not used in the hypotheses (ii)-(vi) above (and only influence (i) through the requirement of being positive definite).  The scaling law \eqref{scaling} only applies directly to the components of $G$, but from the potential identity \eqref{potential} we see that we also have a corresponding scaling law
$$ V(T(t, x)) = 2^{-\frac{4}{p-1}-2} V(t,x)$$
for the potential $V$.

It remains to prove Theorem \ref{main-4}.  This will be the objective of the remaining sections of the paper.

\section{Spherical symmetry and scale invariance}

At first glance, the hypotheses required in Theorem \ref{main-4} of the unknown fields $G, V$ appear to be more complicated than those in previous formulations of the problem, such as Theorem \ref{main-2}.  However, there is one notable way in which the hypotheses of Theorem \ref{main-4} are much better than those in previous formulations: as they only involve linear equalities and inequalities (as well as claims of positive definiteness), the constraints determine a \emph{convex} set in the phase space of possible values for the fields $G, V$.  This can be compared to previous formulations in which the conditions on the unknown field $u$ were quadratic or otherwise nonlinear in nature.  

One consequence of this convexity is that if there is at least one solution $G, V$ to Theorem \ref{main-4}, then there is a solution $G, V$ which is spherically symmetric in a tensorial sense, or more precisely that
\begin{align*}
G_{1,1}(t,x) &= g_{1,1}(t,|x|) \\
G_{\partial_{x_j},\partial_{x_k}}(t,x) &= \frac{x_j x_k}{|x|^2} g_{\partial_r,\partial_r}(t,|x|) + \left(\delta_{jk} - \frac{x_j x_k}{|x|^2}\right) g_{\partial_\omega \partial_\omega}(t,|x|) \\
G_{\partial_{x_j},\partial_t}(t,x) &= \frac{x_j}{|x|} g_{\partial_r,\partial_t}(t,|x|) \\
G_{1,i\partial_{x_j}}(t,x) &= \frac{x_j}{|x|} g_{1,i\partial_r}(t,|x|) \\
G_{1,i\partial_t}(t,x) &= g_{1,i\partial_t}(t,|x|) \\
V(t,x) &= v(t,|x|)
\end{align*}
for some functions $g_{11}, g_{\partial_r,\partial_r}, g_{\partial_\omega, \partial_\omega}, g_{\partial_r, \partial_t}, g_{1,i\partial_r}, g_{1,i\partial_t}$; we omit here for brevity some analogous constraints on the remaining components of $G$ which are either constrained completely by the fields already listed, or (in the case of $G_{\partial_t,\partial_t}$ and $G_{i\partial_t,i\partial_t}$) are not relevant for the theorem.  This is basically because we can average the original solution $G,V$ over rotations (letting the orthogonal group $SO(d)$ act on tensors in an appropriate fashion) and use convexity to obtain a spherically symmetric solution.  For similar reasons (averaging now over dilations rather than rotations), one can assume without loss of generality that the solution $G,V$ is not only \emph{discretely} self-similar in the sense of \eqref{scaling}, but is in fact \emph{continuously} self-similar in the sense that the identity
$$
 G_{D_1,D_2}(\lambda^2 t, \lambda x) = \lambda^{-\frac{4}{p_1} - d_1 - d_2} G_{D_1,D_2}(t,x)
$$ 
holds for all $D_1,D_2 \in {\mathcal D}$, $(t,x) \in H_d$, and $\lambda > 0$, where $d_1,d_2$ denotes the degrees of $D_1,D_2$ respectively as before.

\begin{remark}  Note that the reduction to spherical symmetry of the fields $G, V$ in Theorem \ref{main-4} does \emph{not} mean that we can reduce to spherically symmetric $u$ in the original formulation (Theorem \ref{main}) of the results in this paper, because it is possible for a non-spherically-symmetric field $u$ to have a spherically symmetric Gram matrix (e.g. if $d=2$ and $u$ is equivariant rather than invariant with respect to rotations).  Indeed, a spherically symmetric $u$ would have a vanishing $g_{\partial_\omega,\partial_\omega}$ field, whereas in our construction we will insist instead that this field be positive.  Similarly, we cannot necessarily reduce to solutions in Theorem \ref{main} that are continuously self-similar.
\end{remark}

We now perform these reductions by showing that Theorem \ref{main-4} is a consequence of (and is in fact equivalent to) the following spherically symmetric, continuously self-similar version.  Recall that the domain $H_1$ is given by \eqref{h1-def}.
It will be convenient to make the following definition: we say that a function $F: H_1 \to \R$ \emph{scales like} $\rho^\alpha$ for some $\alpha \in \R$ if one has
\begin{equation}\label{sab}
 F(\lambda^2 t, \lambda r) = \lambda^\alpha F(t,r)
\end{equation}
for all $(t,x) \in H_1$ and $\lambda > 0$.  Here we recall $\rho:H_1 \to \R$ was defined in \eqref{rho1-def}. We also note the following ``factor theorem'' on $H_1$: if $F: H_1 \to \R$ is a smooth function that vanishes on the time axis $r=0$, then the quotient $F(t,r)/r$ has a removable singularity at $r=0$, in the sense that there is a smooth function $G: H_1 \to \R$ such that $F(t,r) = r G(t,r)$ for all $(t,r) \in H_1$ (so that $G$ can be viewed as the smooth completion of $F(t,r)/r$.  Indeed, from the fundamental theorem of calculus, one can take
$$ G(t,r) \coloneqq \int_0^1 (\partial_r F)(t, sr)\ ds.$$
Iterating this, we see that if $k$ is a positive integer, and $F: H_1 \to \R$ is smooth and vanishes to order $k$ on the time axis $r=0$ (in the sense that $F(t,r) = O(r^k)$ as $r \to 0$ for any fixed $t$), then $F/r^k$ has a removable singularity on the time axis.

\begin{table}[ht]
\caption{The scaling exponent of various fields on $H_1$ used in this paper, as well as their parity in $r$ (even or odd).  Some of the fields in this table will only be defined in subsequent sections.}\label{scalingexp}
\begin{tabular}{|l|l|l|}
\hline 
Exponent & Fields & Parity\\
\hline 
$2$ & $t$ & even\\
$1$ & $\rho$ & even \\
$1$ & $r$ & odd \\
\hline
$-\frac{4}{p-1}+d-4$ & $S_1, S_2$ & same as $d$\\
$-\frac{4}{p-1}$ & $g_{1,1}$, $T_{00}$, $W$ & even \\
$-\frac{4}{p-1}-1$ & $g_{1,i\partial_r}$, $T_{0r}$ & odd \\
$-\frac{4}{p-1}-2$ & $g_{\partial_r,\partial_r}$, $g_{\partial_\omega,\partial_\omega}$, $g_{1,i\partial_t}$, $v$, $T_{rr}$, $T_{\omega\omega}$, $E$, $Z$ & even \\
$-\frac{4}{p-1}-3$ & $g_{\partial_r, \partial_t}$, $J_r$ & odd \\
\hline
\end{tabular}
\end{table}

\begin{theorem}[Fourth reduction]\label{main-5}   There exist smooth fields $g_{1,1}, g_{\partial_r,\partial_r}, g_{\partial_\omega,\partial_\omega}, g_{\partial_r,\partial_t}, g_{1,i\partial_r}, g_{1,i\partial_t}, v: H_1 \to \R$ obeying the following properties:
\begin{itemize}
\item[(i)] One has the ``positive definite'' inequalities
\begin{equation}\label{gstorm}
 \left(\frac{1}{2} \partial_r g_{1,1}\right)^2 + g_{1,i\partial_r}^2 < g_{1,1} g_{\partial_r,\partial_r}
\end{equation}
and
\begin{equation}\label{gstorm-2}
g_{1,1}, g_{\partial_r,\partial_r}, g_{\partial_\omega,\partial_\omega} > 0
\end{equation}
pointwise on $H_1$.
\item[(ii)] For each $(D_1,D_2) = (1,1), (\partial_r,\partial_r), (\partial_\omega,\partial_\omega), (\partial_r,\partial_t), (1, i\partial_r), (1, i\partial_t)$, $g_{D_1,D_2}$ scales like $\rho^{-\frac{4}{p-1}-\operatorname{ord}(D_1)-\operatorname{ord}(D_2)}$, where we recall the parabolic order $\operatorname{ord}(D)$ of a differential operator $D \in \{ 1, \partial_r, i \partial_r, \partial_\omega, \partial_t, i\partial_t\}$ is given by Table \ref{order}.  Similarly, we require that $v$ scales like $\rho^{-\frac{4}{p-1}-2}$.  See Table \ref{scalingexp} for a summary of these scaling requirements.
\item[(v)]  One has the defocusing property $v>0$ pointwise on $H_1$.
\item[(vi)]  If one defines the mass density
$$ T_{00} \coloneqq g_{1,1}$$
the radial momentum density
$$ T_{0r} \coloneqq -2 g_{1,i\partial_r}$$
the radial stress
\begin{equation}\label{trr-def}
 T_{rr} \coloneqq 4 g_{\partial_r,\partial_r} + 2(p-1) v - \left(\partial_r^2 + \frac{d-1}{r} \partial_r\right) g_{1,1}
\end{equation}
the angular stress
\begin{equation}\label{too-def}
 T_{\omega\omega} \coloneqq 4 g_{\partial_\omega,\partial_\omega} + 2(p-1) v - \left(\partial_r^2 + \frac{d-1}{r} \partial_r\right) g_{1,1}
\end{equation}
the energy density
\begin{equation}\label{E-def}
 E \coloneqq \frac{1}{2} g_{\partial_r,\partial_r} + \frac{d-1}{2} g_{\partial_\omega,\partial_\omega} + v
\end{equation}
and radial energy current
\begin{equation}\label{jr-def}
 J_r \coloneqq - g_{\partial_r, \partial_t} 
\end{equation}
then one has the potential identity
\begin{equation}\label{potential-r}
g_{1,i\partial_t} + \frac{1}{2} \left(\partial_r^2 + \frac{d-1}{r} \partial_r\right) g_{1,1} - g_{\partial_r, \partial_r} - (d-1) g_{\partial_\omega,\partial_\omega} = (p+1) v 
\end{equation}
and the conservation laws
\begin{align}
\partial_t T_{00} + \left(\partial_r + \frac{d-1}{r}\right) T_{0r} &= 0 \label{ping-2r} \\
\partial_t T_{0r} + \left(\partial_r + \frac{d-1}{r}\right) T_{rr} - \frac{d-1}{r} T_{\omega \omega} &= 0 \label{ping-3r} \\
\partial_t E + \left(\partial_r + \frac{d-1}{r}\right) J_r &= 0. \label{ping-4r}
\end{align}
with a removable singularity at $r=0$ (see Remark \ref{remov} below).
\item[(vii)]  The functions $g_{1,1}, g_{\partial_r, \partial_r}, g_{\partial_\omega, \partial_\omega}, g_{1,i\partial_t}, v$ are even in $r$, while $g_{\partial_r, \partial_t}, g_{1,i\partial_r}$ are odd in $r$ (see Table \ref{scalingexp}).  Furthermore, $g_{\partial_r,\partial_r} - g_{\partial_\omega,\partial_\omega}$ vanishes on the time axis $r=0$.
\end{itemize}
\end{theorem}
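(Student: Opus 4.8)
The plan is to prove Theorem~\ref{main-5} by continuing the quantifier-elimination strategy of the paper: each of the seven scalar fields is removed in turn by exploiting the fact that it enters exactly one of the remaining constraints, so that Theorem~\ref{main-5} follows from a descending chain of simpler existence statements — a four-field version with $v=0$ (Theorem~\ref{main-6}), a three-field version (Theorem~\ref{main-7}), a two-field version (Theorem~\ref{main-8}) — and finally from an explicit construction of a single scalar field $W$ in Section~\ref{conclude}. As a first step I would remove $g_{1,i\partial_t}$ and $g_{\partial_r,\partial_t}$: the field $g_{1,i\partial_t}$ appears only in the potential identity~\eqref{potential-r}, so given the other six fields we simply define it by that identity, and since the right-hand side of~\eqref{potential-r} is smooth, even in $r$, has a removable singularity at $r=0$ (apply the factor theorem on $H_1$ to the term $\frac{d-1}{r}\partial_r g_{1,1}$), and (by Table~\ref{order}) scales like $\rho^{-\frac{4}{p-1}-2}$, the requirements (ii) and (vii) for $g_{1,i\partial_t}$ are automatic; likewise $g_{\partial_r,\partial_t}$ (equivalently $J_r$) appears only in the energy conservation law~\eqref{ping-4r}, which we integrate in $r$ — writing it as $r^{1-d}\partial_r(r^{d-1}J_r)=-\partial_t E$ and using the factor theorem to secure a removable singularity — recovering $g_{\partial_r,\partial_t}$ as an $r$-antiderivative of $-\partial_t E$; here one must carry an asymptotic decay hypothesis on $E$ near $r=0$ and near $t=0$ so that this antiderivative is finite, smooth up to the initial slice, odd in $r$, and of scaling $\rho^{-\frac{4}{p-1}-3}$. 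Finally, since $v$ enters the surviving constraints only through the combinations $4g_{\partial_r,\partial_r}+2(p-1)v$ and $4g_{\partial_\omega,\partial_\omega}+2(p-1)v$ of~\eqref{trr-def} and~\eqref{too-def}, replacing $g_{\partial_r,\partial_r}$ by $g_{\partial_r,\partial_r}+\frac{p-1}{2}v$ and $g_{\partial_\omega,\partial_\omega}$ by $g_{\partial_\omega,\partial_\omega}+\frac{p-1}{2}v$ reduces matters to the case $v=0$, at the price of strengthening the bare positivity $g_{\partial_r,\partial_r},g_{\partial_\omega,\partial_\omega}>0$ to domination by a fixed positive multiple of $\rho^{-\frac{4}{p-1}-2}$ so that a positive $v$ can afterwards be recovered; this is Theorem~\ref{main-6}.

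Starting from the four fields $g_{1,1},g_{\partial_r,\partial_r},g_{\partial_\omega,\partial_\omega},g_{1,i\partial_r}$ of Theorem~\ref{main-6}, I would substitute~\eqref{trr-def} and~\eqref{too-def} (with $v=0$) into the momentum conservation law~\eqref{ping-3r} and rearrange it into the form $\partial_r(r^{d-1}g_{\partial_r,\partial_r})=(d-1)r^{d-2}g_{\partial_\omega,\partial_\omega}+S_1$, where $S_1$ is the explicit expression in $g_{1,1}$ and $g_{1,i\partial_r}$ displayed in the introduction; integrating in $r$ expresses $g_{\partial_r,\partial_r}$ as an integral of $g_{\partial_\omega,\partial_\omega}$ and $S_1$ and removes it from the list (Theorem~\ref{main-7}), the new constraints being an asymptotic vanishing condition on the integrand (so that $g_{\partial_r,\partial_r}$ extends smoothly to $t=0$) and the inequality that the positive-definiteness~\eqref{gstorm} imposes on this integral. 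The angular stress $g_{\partial_\omega,\partial_\omega}$ is then pinned down only by a nonnegativity condition and by that same integral inequality, so it can be chosen conveniently and eliminated (Theorem~\ref{main-8}), leaving just $g_{1,1}$ and $g_{1,i\partial_r}$ subject to the mass conservation law $\partial_t g_{1,1}=2\left(\partial_r+\frac{d-1}{r}\right)g_{1,i\partial_r}$ together with a handful of integral conditions on $S_1$.

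The mass conservation law is solved identically by the ansatz $g_{1,1}=2r^{1-d}\partial_r(r^d W)$, $g_{1,i\partial_r}=r^{1-d}\partial_t(r^d W)$ for a scalar field $W$, so Theorem~\ref{main-8} collapses to a short list of explicit differential inequalities on the single field $W$ — with prescribed scaling $\rho^{-\frac{4}{p-1}}$, evenness in $r$, and smoothness up to $r=0$ and $t=0$ — which I would then satisfy by writing down $W$ in closed form and checking the inequalities directly. This final step is where the energy-supercriticality hypothesis~\eqref{energy-supercrit} is used in an essential way: it is exactly this condition that makes the scaling exponents occurring in the integral conditions on $S_1$ compatible with a positive, smooth solution, the underlying mechanism being that the energy current $g_{\partial_r,\partial_t}$ is absent from all of the mass and momentum constraints and the total energy is infinite, so the obstruction that rules out discretely self-similar solutions in the energy-critical and energy-subcritical regimes is no longer present.

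\textbf{Main obstacle.} No single reduction is hard — each is a routine algebraic rearrangement or $r$-integration — but the difficulty is cumulative. Each $r$-integration (first for $g_{\partial_r,\partial_t}$, then for $g_{\partial_r,\partial_r}$) produces asymptotic vanishing conditions near $r=0$ and near the initial slice $t=0$, and all of these must hold simultaneously, together with the strict positivity requirements — most delicately~\eqref{gstorm} and the recovered $v>0$, $g_{\partial_\omega,\partial_\omega}>0$, $g_{\partial_r,\partial_r}>0$ — for one and the same explicit choice of $W$. I expect checking that the positivity inequalities are compatible with the smoothness-at-$t=0$ conditions for this $W$ to be the crux of the whole argument; by contrast, tracking the parities in $r$ throughout the chain and invoking the factor theorem to secure removable singularities at $r=0$ at each stage is routine bookkeeping.
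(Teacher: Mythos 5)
Your proposal follows the same reduction chain as the paper (eliminate $g_{1,i\partial_t}$ via \eqref{potential-r}, eliminate $g_{\partial_r,\partial_t}$ by integrating \eqref{ping-4r}, absorb $v$ into $g_{\partial_r,\partial_r}$ and $g_{\partial_\omega,\partial_\omega}$, then integrate momentum conservation, eliminate $g_{\partial_\omega,\partial_\omega}$, and solve the mass conservation law with the $W$-ansatz), so at the level of strategy it matches. However, there is a genuine gap in your treatment of the very first step, and it is not a mere bookkeeping issue.

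When you define $g_{\partial_r,\partial_t}$ by integrating the energy conservation law, you note — correctly — that one needs some asymptotic condition on $E$ so that $t\mapsto\int_0^1 E(t,r)\,r^{d-1}\,dr$ extends smoothly to $t=0$, and you propose to ``carry'' such a hypothesis forward to the next theorem in the chain. The paper does not do this, and with good reason: the relevant condition (after rescaling, it is that the renormalised total energy $\lim_{R\to\infty}\int_0^R\bigl(E(1,r)\,r^{d-1}-r^{d-3-\frac{4}{p-1}}Q_k(1/r^2)\bigr)\,dr$ vanishes, or the analogous polynomial-coefficient vanishing in the boundary case) is a one-dimensional closed linear constraint, and there is no reason the explicit $W$ constructed at the end of the argument would satisfy it. The paper instead proves that for \emph{any} fields satisfying the five-field theorem, one can always perturb $(g_{\partial_r,\partial_r},g_{\partial_\omega,\partial_\omega},v)$ by $(-(p-1)Z,-(p-1)Z,2Z)$ — which changes $E$ by $(2-\tfrac{d(p-1)}{2})Z$ but leaves $T_{00}$, $T_{0r}$, $T_{rr}$, $T_{\omega\omega}$ unchanged — so as to make $E$ good; the perturbation $Z$ can be taken supported far from the origin in a scale-invariant sense, which (and this is precisely where energy-supercriticality $\frac{4}{p-1}<d-2$ is used) makes it pointwise as small as desired relative to $\rho^{-\frac{4}{p-1}-2}$, so the strict inequalities \eqref{gstorm}, \eqref{gstorm-2}, $v>0$ survive. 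This is the constraint-is-dense-not-closed phenomenon the introduction alludes to, and it is the main content of the main-5-from-main-6 step. Your proposal omits this perturbation argument entirely, and the alternative you suggest — imposing the goodness of $E$ as a hypothesis on the downstream theorems — would force you to carry a nonlocal integral constraint through the elimination of $v$, $g_{\partial_r,\partial_r}$, and $g_{\partial_\omega,\partial_\omega}$ down to the choice of $W$, which would be substantially harder and is not what the paper does. An analogous perturbation-to-goodness argument is also needed (and likewise omitted from your sketch) when you integrate the momentum law to eliminate $g_{\partial_r,\partial_r}$, where the paper handles it via the asymptotic hypothesis (ix) of the next theorem together with a large-parameter choice of $g_{\partial_\omega,\partial_\omega}$.
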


\begin{remark}\label{remov}  At first glance, the quantities $T_{rr}, T_{\omega,\omega}$, as well as the equations \eqref{potential-r}, \eqref{ping-2r}, \eqref{ping-3r}, \eqref{ping-4r} appear to have singularities on the time axis $r=0$, due to the factors of $\frac{1}{r}$.  However, these factors are removable due to the symmetry hypotheses in (vii).  Indeed, for each fixed time $t$, one can Taylor expand the even function $g_{1,1}$ as $g_{1,1} = a + br^2 + \dots$, and then one sees that the quantity $\left(\partial_r^2 + \frac{d-1}{r} \partial_r\right) g_{1,1}$ extends smoothly across $r=0$ (which is unsurprising given that this operator is nothing more than the Laplacian on spherically symmetric functions).  Thus $T_{rr}$ and $T_{\omega\omega}$ extend smoothly to $r=0$.  Also, the difference $T_{rr} - T_{\omega\omega}$ vanishes at $r=0$, so the singularity for \eqref{ping-3r} is also removable.  Finally, the functions $T_{0r}, J_r$ are odd in $r$ and so the singularity in \eqref{ping-2r}, \eqref{ping-4r} is also removable.
\end{remark}

\begin{remark}  It is not difficult to use Table \ref{scalingexp} to perform a ``dimensional analysis'' to verify that the requiements in Theorem \ref{main-5}(vi) are consistent with the scaling and parity requirements in Theorem \ref{main-5}(ii), (vii).  One can use the continuous self-similarity (ii) to eliminate the time variable, so that Theorem \ref{main-5} becomes an ODE assertion about the existence of some scalar functions on $\R$.  However, it will be convenient (and more physically natural) to continue to work with both the time variable $t$ and the spatial variable $r$, rather than with just one of these variables.  It is also worth noting that the components $g_{\partial_r,\partial_t}$ and $g_{1,i\partial_t}$ have only a small role to play in the above theorem, basically appearing only in the constraints \eqref{ping-4r} and \eqref{potential-r} respectively; crucially, they do not appear at all in the positive definite constraints in (i), thanks to the previously observed absence of the fields $g_{\partial_t,\partial_t}$ or $g_{i\partial_t,i\partial_t}$.  As such, we will be able to eliminate these fields from the problem in the next section.
\end{remark}

Let us now see how Theorem \ref{main-5} implies Theorem \ref{main-4}.  Let the fields 
$$g_{1,1}, g_{\partial_r,\partial_r}, g_{\partial_\omega,\partial_\omega}, g_{\partial_r,\partial_t}, g_{1,i\partial_r}, g_{1,i\partial_t}, v$$ 
be as in Theorem \ref{main-5}.  Let $A>0$ be a large quantity to be chosen later.  We then define the functions $G_{D_1,D_2}$ for $D_1,D_2 \in {\mathcal D}: H_d \to \R$ and $V: H_d \to \R$ by the formulae in Table \ref{tab}.

\begin{table}[ht]
\caption{Components of $G$ and $V$, and their values at a given point $(t,x)$ of $H_d$; thus for instance $G_{1,1}(t,x)$ and $G_{i,i}(t,x)$ are both set equal to $g_{1,1}(t,|x|)$.  Here $j,k=1,\dots,d$ are arbitrary.}\label{tab}
\begin{tabular}{|l|l|}
\hline 
Fields & Value at $(t,x)$\\
\hline 
$G_{1,1}, G_{i,i}$ & $g_{1,1}(t,|x|)$ \\
$G_{1,i}, G_{i,1}$ & $0$ \\
\hline
$G_{1,\partial_{x_j}}, G_{\partial_{x_j},1}, G_{i,i\partial_{x_j}}, G_{i\partial_{x_j},i}$ & $\frac{1}{2} \partial_{x_j} G_{1,1}(t,x)$ \\
$G_{1,i\partial_{x_j}}, G_{i\partial_{x_j},1}, -G_{i,\partial_{x_j}}, -G_{\partial_{x_j},i}$ & $\frac{x_j}{r} g_{1,i\partial_r}(t,|x|)$ \\
\hline 
$G_{1,\partial_{t}}, G_{\partial_{t},1}, G_{i,i\partial_{t}}, G_{i\partial_{t},i}$ & $\frac{1}{2} \partial_{t} G_{1,1}(t,x)$ \\
$G_{1,i\partial_t}, G_{i\partial_t,1}, -G_{i,\partial_t}, -G_{\partial_t, i}$ & $g_{1,i\partial_t}(t,|x|)$ \\
\hline
$G_{\partial_{x_j},\partial_{x_k}}, G_{i\partial_{x_j},i\partial_{x_k}}$ & $\frac{x_j x_k}{|x|^2} g_{\partial_r,\partial_r}(t,|x|) + \left(\delta_{jk} - \frac{x_j x_k}{|x|^2}\right) g_{\partial_\omega,\partial_\omega}(t,|x|)$ \\
$G_{\partial_{x_j},i\partial_{x_k}}, G_{i\partial_{x_k},\partial_{x_j}}$ & $0$ \\
\hline
$G_{\partial_{x_j},\partial_t}, G_{\partial_t,\partial_{x_j}}, G_{i\partial_{x_j},i\partial_t}, G_{i\partial_t, i \partial_{x_j}}$ & $\frac{x_j}{|x|} g_{\partial_r,\partial_t}(t,|x|)$\\
$G_{\partial_{x_j},i\partial_t}, G_{i\partial_t,\partial_{x_j}}, -G_{i\partial_{x_j}, \partial_t}, -G_{\partial_t, i\partial_{x_j}}$ & $\frac{1}{2} \left(\partial_{x_j} G_{1,i\partial_t}(t,x) - \partial_t G_{1,i\partial_{x_j}}(t,x)\right)$ \\
\hline
$G_{\partial_t, \partial_t}$, $G_{i\partial_t, i\partial_t}$ & $A \rho(t,x)^{-\frac{4}{p-1}-4}$ \\
$G_{\partial_t, i \partial_t}$, $G_{i\partial_t,\partial_t}$ & $0$\\
\hline
$V$ & $v(t,|x|)$\\
\hline
\end{tabular}
\end{table}

It is a classical result of Whitney \cite{whitney} that a smooth function $g(t,r)$ that is even in $r$ can be thought of as a smooth function of $(t,r^2)$ (where the latter is viewed on the half-line $[0,+\infty)$), while an odd function of $t,r$ that is odd in $r$ can be thought of as $r$ times a smooth function of $(t,r^2)$; see e.g. \cite[Corollary 2.2]{tao-high}.  In particular, we see that $g_{1,1}(t,|x|), g_{1,i\partial_t}(t,|x|), v(t,|x|), g_{\partial_r,\partial_r}(t,|x|) - g_{\partial_\omega,\partial_\omega}(t,|x|)$ are smooth functions of $t,x$, while $g_{\partial_r, \partial_t}(t,|x|), g_{1,i\partial_r}(t,|x|)$ are $|x|$ times a smooth function of $t,x$.  Finally, $g_{\partial_r,\partial_r}(t,|x|) - g_{\partial_\omega,\partial_\omega}(t,|x|)$ is $|x|^2$ times a smooth function of $t,x$, due to the hypothesis that $g_{\partial_r,\partial_r} - g_{\partial_\omega,\partial_\omega}$ vanishes on the time axis.  From this and Table \ref{tab}, we can check that all of the functions $G_{D_1,D_2}$, $V$ have removable singularities on the time axis and thus define smooth functions on $H_d$.  

From tedious direct calculation using Table \ref{tab}, we can verify the symmetry $G_{D_2,D_1} = G_{D_1,D_2}$ and the properties claimed in Theorem \ref{main-4}(ii), (iii).  Since
$$ \frac{G_{1, i \partial_{x_j}}}{G_{1,1}}(t,x) = \frac{x_j}{|x|} \frac{g_{1, i \partial_r}}{g_{1,1}}(t,|x|)$$
(away from the time axis at least) we have
$$ \partial_{x_k} \frac{G_{1, i \partial_{x_j}}}{G_{1,1}}(t,x) = \left(\frac{\delta_{jk}}{|x|} - \frac{x_j x_k}{|x|^3} + \frac{x_j x_k}{|x|^2} \partial_r\right) \frac{g_{1, i \partial_r}}{g_{1,1}}(t,|x|);$$
as the right-hand side is symmetric in $j$ and $k$, we have the curl-free property in Theorem \ref{main-4}(iv) (after removing the singularity at the time axis).  The positivity property in Theorem \ref{main-4}(v) is clear.  For in Theorem \ref{main-4}(vi), we observe from the constructions of the various fields that 
\begin{align*}
T_{00}(t,x) &= T_{00}(t,|x|) \\
T_{0j}(t,x) = T_{j0}(t,x) &= \frac{x_j}{|x|} T_{0r}(t,|x|) \\
T_{jk}(t,x) &= \frac{x_j x_k}{|x|^2} T_{rr}(t,|x|) + \left(\delta_{jk} - \frac{x_j x_k}{|x|^2}\right) T_{\omega\omega}(t,|x|) \\
E(t,x) &= E(t,|x|) \\
J_j(t,x) &= \frac{x_j}{|x|} J_r(t,|x|)
\end{align*}
and then it is a routine matter to derive \eqref{potential}, \eqref{ping-2}, \eqref{ping-3}, \eqref{ping-4} from \eqref{potential-r}, \eqref{ping-2r}, \eqref{ping-3r}, \eqref{ping-4r}, again working away from the time axis and then using smoothness to remove the singularity.

The only remaining task is to check in Theorem \ref{main-4}(i); that is to say, we need to verify that for $(t,x) \in H_d$, the matrix $(G_{D_1,D_2}(t,x))_{D_1,D_2 \in {\mathcal D}}$ is strictly positive definite.  In view of \eqref{scaling}, it suffices to do so in a fundamental domain for $H_d /T^\Z$, such as $\{ (t,x): 1 \leq \rho < 2 \}$. By continiuty, we can also avoid the time axis $x=0$ as long as our positive definiteness is uniform in $t,x$.  Henceforth we fix $(t,x)$ in this region and suppress dependence on $t,x$.  If we let $\vec a \coloneqq (a_D)_{D \in {\mathcal D}}$ be a tuple of real numbers, not all zero, our task is to show that
$$ \sum_{D_1,D_2 \in {\mathcal D}} a_{D_1} a_{D_2} G_{D_1,D_2} > \eps |\vec a|^2$$
for some $\eps>0$ uniform in $t,x$.
The left-hand side can be expanded out as
\begin{align*}
& (a_1^2 + a_i^2) G_{1,1} + 2 (a_1 a_{\partial_{x_j}} + a_i a_{i\partial_{x_j}}) G_{1,\partial_{x_j}} +
2(a_1 a_{i\partial_{x_j}} - a_i a_{\partial_{x_j}}) G_{1,i\partial_{x_j}} \\
&\quad  + 2(a_1 a_{\partial_t} + a_i a_{i\partial_t}) G_{1,\partial_t} +
2(a_1 a_{i\partial_t} - a_i a_{\partial_t}) G_{1,i\partial_t} \\
&\quad + 2 (a_{\partial_{x_j}} a_{\partial_{x_k}} + a_{i\partial_{x_j}} a_{i \partial_{x_k}}) G_{\partial_{x_j},\partial_{x_k}}  \\
&\quad + 2 (a_{\partial_{x_j}} a_{\partial_t} + 2a_{i\partial_{x_j}} a_{i \partial_t}) G_{\partial_{x_j},\partial_t} + (a_{\partial_{x_j}} a_{i\partial_t} - a_{i\partial_{x_j}} a_{\partial_t}) G_{\partial_{x_j},i\partial_t} \\
&\quad + (a_{\partial_t}^2 + a_{i\partial_t}^2) G_{\partial_t,\partial_t}
\end{align*}
where we use the usual summation conventions.
If we define 
$$\vec b = ( a_1, a_i, a_{\partial_{x_1}}, \dots, a_{\partial_{x_d}}, a_{i\partial_{x_1}}, \dots, a_{i\partial_{x_d}})$$
to be the spatial components of $\vec a$, then all the cross-terms in the above expression involving one copy of $a_{\partial_t}$ or $a_{i\partial_t}$ and one term from $\vec b$ can be controlled via Cauchy-Schwarz as
$$ O( |\vec b| (a_{\partial_t}^2 + a_{i\partial_t}^2)^{1/2} )$$
where the implied constants can depend on $G$ but are uniform in $t,x$ in the fundamental domain.  On the other hand, from construction of $G_{\partial_t,\partial_t}$, the term $(a_{\partial_t}^2 + a_{i\partial_t}^2) G_{\partial_t,\partial_t}$ is bounded from below by $c A (a_{\partial_t}^2 + a_{i\partial_t}^2)$ for some absolute constant $c>0$.  By the arithmetic mean-geometric mean inequality, it will thus suffice (for $A$ large enough) to obtain the bound
\begin{equation}\label{roar}
\begin{split}
& (a_1^2 + a_i^2) G_{1,1} + 2 (a_1 a_{\partial_{x_j}} + a_i a_{i\partial_{x_j}}) G_{1,\partial_{x_j}} +
2(a_1 a_{i\partial_{x_j}} - a_i a_{\partial_{x_j}}) G_{1,i\partial_{x_j}} \\
&\quad + (a_{\partial_{x_j}} a_{\partial_{x_k}} + a_{i\partial_{x_j}} a_{i \partial_{x_k}}) G_{\partial_{x_j},\partial_{x_k}}  \\
&\quad \geq 2 \eps |\vec b|^2
\end{split}
\end{equation}
for some $\eps>0$ independent of $A$.

If we set $a_{\partial_r}, a_{i\partial_r} \in \R$ and $a_{\partial_\omega}, a_{i\partial_\omega} \in \R^d$ to be the quantities
\begin{align*}
a_{\partial_r} &\coloneqq \frac{x_j}{|x|} a_{\partial_{x_j}} \\
a_{i\partial_r} &\coloneqq \frac{x_j}{|x|} a_{i\partial_{x_j}} \\
a_{\partial_\omega} &\coloneqq \left( a_{\partial_{x_j}} - \frac{x_j}{|x|} a_{\partial_r} \right)_{j=1}^d \\
a_{i\partial_\omega} &\coloneqq \left( a_{i\partial_{x_j}} - \frac{x_j}{|x|} a_{i\partial_r} \right)_{j=1}^d 
\end{align*}
then the left-hand side of \eqref{roar} can be written as
\begin{align*}
& (a_1^2 + a_i^2) g_{1,1} + 2 (a_1 a_{\partial_r} + a_i a_{i\partial_r}) g_{1,\partial_r} +
2 (a_1 a_{i\partial_r} - a_i a_{\partial_r}) g_{1,i\partial_r} \\
&\quad +  (a_{\partial_r}^2+ a_{i\partial_r}^2) g_{\partial_r,\partial_r} + 
 (|a_{\partial_\omega}|^2 + |a_{i\partial_\omega}|^2) g_{\partial_\omega, \partial_\omega}
\end{align*}
where we suppress the dependence on $t$ and $|x|$ in the $g$ terms.  The claim now follows from the Cauchy-Schwarz inequality, the Legendre identity
$$ (a_1 a_{\partial_r} + a_i a_{i\partial_r})^2 + (a_1 a_{i\partial_r} - a_i a_{\partial_r})^2 = (a_1^2 + a_i^2) (a_{\partial_r}^2+ a_{i\partial_r}^2)$$
and the hypotheses \eqref{gstorm}, \eqref{gstorm-2}.

It remains to prove Theorem \ref{main-5}.  This will be the objective of the remaining sections of the paper.

\section{Eliminating the energy conservation law and the potential energy identity}\label{energy-sec}

To motivate the next reduction, assume for the moment that the fields
$$g_{1,1}, g_{\partial_r,\partial_r}, g_{\partial_\omega,\partial_\omega}, g_{\partial_r,\partial_t}, g_{1,i\partial_r}, g_{1,i\partial_t}, v$$
obey the hypotheses and conclusions of Theorem \ref{main-5}, and let $T_{00}, T_{0r}, T_{rr}, T_{\omega\omega}, E, J_r$ be as in that theorem.  The pointwise conservation laws \eqref{ping-2r}, \eqref{ping-3r}, \eqref{ping-4r} can then be written in a familiar integral form.  For instance, multiplying the pointwise mass conservation law \eqref{ping-2r} by $r^{d-1}$ and then integrating on a fixed interval $[0,R]$, one obtains the integral mass conservation identity
$$ \partial_t \int_0^R T_{00}(t,r)\ r^{d-1} dr = - R^{d-1} T_{0r}(t,R) $$
and similarly the pointwise energy conservation law \eqref{ping-4r} gives the integral energy conservation identity
\begin{equation}\label{local-energy}
\partial_t \int_0^R E(t,r)\ r^{d-1} dr = - R^{d-1} J_r(t,R).
\end{equation}
Applying the same manipulations to \eqref{ping-3r} gives a more complicated identity
$$
\partial_t \int_0^R T_{0r}(t,r)\ r^{d-1} dr = - R^{d-1} T_{rr}(t,R) + (d-1) \int_0^R T_{\omega\omega}(t,r)\ r^{d-2} dr;$$
if one sets $d=3$ for sake of discussion, applies \eqref{trr-def}, \eqref{too-def}, and integrates by parts, one obtains the local Morawetz identity
\begin{align*}
\partial_t \int_0^R T_{0r}(t,r)\ r^2 dr &= - R^2 T_{rr}(t,R) - 2 R \partial_r g_{1,1}(t,R) - 2 g_{1,1}(t,R) \\
&\quad + 2 g_{1,1}(t,0) + \int_0^R (8 g_{\partial \omega,\partial \omega}(t,r) + 4(p-1) V(t,r)) r\ dr.
\end{align*}

These sorts of identities are often used in subcritical situations to help establish global regularity of solutions to NLS.  For instance, suppose we are in the energy-subcritical situation where $d < 3$, or $d \geq 3$ and $p < 1 + \frac{4}{d-2}$, rather than in the energy super-critical situation \eqref{energy-supercrit} that is the focus of this paper.   We apply \eqref{local-energy} with $R=1$ (say) to conclude that $\int_0^1 E(t,r)\ r^{d-1} dr$ stays bounded as $t \to 0^+$.  But from the scaling hypothesis (ii) and \eqref{E-def}, the energy density $E$ scales like $\rho^{-\frac{4}{p-1}-2}$, and hence (on setting $\lambda = t^{-1/2}$ and integrating $r$ from $0$ to $1$)
$$ \int_0^{t^{-1/2}} E(1,r)\ r^{d-1} dr = t^{\frac{2}{p-1}-\frac{d-2}{2}} \int_0^1 E(t,r)\ r^{d-1} dr.$$
In the energy-subcritical case, the exponent $\frac{2}{p-1}-\frac{d-2}{2}$ is positive, and hence $\int_0^{t^{-1/2}} E(1,r)\ r^{d-1} dr$ goes to zero as $t \to 0^+$.  In the defocusing setting $v>0$, the energy density $E$ is strictly positive, giving a contradiction.

Now we return to the energy-supercritical situation of Theorem \ref{main-5}.  In this case, the local energy conservation law \eqref{local-energy} does not lead to a contradiction, but still manages to impose a one-dimensional linear constraint on the energy density $E$.  (Note that the energy current $J_r$ is almost arbitrary, since there are almost no constraints on the field $g_{\partial_r,\partial_t}$ in Theorem \ref{main-5} other than through the energy conservation law.)   Namely, from \eqref{local-energy}, the smoothness of $J_r$ on $H_1$, Taylor expansion, and the fundamental theorem of calculus we have the asymptotic
$$ \int_0^1 E(t,r)\ r^{d-1} dr = P_k(t) + O( t^{k+1} )$$
as $t \to 0$, where $k \geq 0$ is an integer to be chosen later, $P_k$ is a polynomial of degree at most $k$, and the implied constant in the $O()$ notation is allowed to depend on $k$ and on the data in Theorem \ref{main-5}.  As $E$ scales like $\rho^{-\frac{4}{p-1}-2}$, we can then conclude the asymptotic
\begin{equation}\label{ss}
 \int_0^R E(1,r)\ r^{d-1} dr = R^{d-2-\frac{4}{p-1}} (P_k(1/R^2) + O( R^{-2k-2} ))
\end{equation}
as $R \to \infty$.  Again using the fact that $E$ scales like $\rho^{-\frac{4}{p-1}-2}$, we also have the asymptotic
\begin{equation}\label{ss2}
 E(1,r) r^{d-1} = r^{d-3-\frac{4}{p-1}} (Q_k(1/r^2) + O( r^{-2k-2} ))
\end{equation}
as $r \to \infty$, for some polynomial $Q_k$ of degree at most $k$.  

Now take $k$ to be the largest integer such that
\begin{equation}\label{dak}
 d - 2 - \frac{4}{p-1} - 2k \geq 0;
\end{equation}
note from the energy-supercriticality hypothesis \eqref{energy-supercrit} that $k$ is non-negative.
If strict inequality holds in \eqref{dak}, then the error term $R^{d-2-\frac{4}{p-1}} O(R^{-2k-2})$ in \eqref{ss} goes to zero at infinity, while the error term $r^{d-3-\frac{4}{p-1}} O(r^{-2k-2})$ in \eqref{ss2} is absolutely integrable in $r$ (for $r$ near zero this follows from the local integrability of $r^{d-3-\frac{4}{p-1}} Q_k(1/r^2)$ and the triangle inequality).  Integrating \eqref{ss2} and comparing with \eqref{ss}, we see that $R^{d-2-\frac{4}{p-1}} P_k(1/R^2)$ must be a primitive of $r^{d-3-\frac{4}{p-1}} Q_k(1/r^2)$, and one has vanishing renormalised total energy in the sense that
\begin{equation}\label{eqk}
 \lim_{R \to \infty} \int_0^R \left(E(1,r) r^{d-1} - r^{d-3-\frac{4}{p-1}} Q_k(1/r^2)\right)\ dr = 0
\end{equation}
since otherwise there would have to be a constant term in $R^{d-2-\frac{4}{p-1}} P_k(1/R^2)$, which is not possible when strict inequality occurs in \eqref{dak}.  If instead equality holds in \eqref{dak}, then the same analysis yields instead that the degree $k$ coefficient of $Q_k$ must vanish (that is to say, $Q_k$ in fact has degree at most $k-1$), since otherwise there would have to be a $\log R$ term present in \eqref{ss}, which is not the case.

As it turns out, though, in the energy-supercritical case the linear constraint that we have just obtained is ``dense'' rather than ``closed'', in the sense that data that does not obey this constraint can be perturbed (in a natural topology) to obey the constraint.  (In other words, the linear functional that defines the constraint is unbounded with respect to a certain natural norm.)  Informally speaking, this will be because for self-similar solutions to an energy-supercritical problem there will be an infinite amount of energy near spatial infinity that is available to ``spend'' to perform such a perturbation.  As such, the constraint can be eliminated entirely; we can also easily eliminate the potential identity \eqref{potential-r} due to the fact that the field $g_{1,i\partial_t}$ appearing in that identity is almost completely unconstrained outside of that identity.  More precisely, we can deduce Theorem \ref{main-5} from

\begin{theorem}[Fifth reduction]\label{main-6}   There exist smooth fields $g_{1,1}, g_{\partial_r,\partial_r}, g_{\partial_\omega,\partial_\omega}, g_{1,i\partial_r}, v: H_1 \to \R$ obeying the following properties:
\begin{itemize}
\item[(i)] One has the positive definite inequalities \eqref{gstorm}, \eqref{gstorm-2} pointwise on $H_1$.
\item[(ii)] For each $(D_1,D_2) = (1,1), (\partial_r,\partial_r), (\partial_\omega,\partial_\omega), (1, i\partial_r)$, $g_{D_1,D_2}$ scales like $\rho^{-\frac{4}{p-1}-\operatorname{ord}(D_1)-\operatorname{ord}(D_2)}$.  Similarly, we require that $v$ scales like $\rho^{-\frac{4}{p-1}-2}$.
\item[(v)]  One has the defocusing property $v>0$ pointwise on $H_1$.
\item[(vi)]  If one defines the mass density
$$ T_{00} \coloneqq g_{1,1}$$
the radial momentum density
$$ T_{0r} \coloneqq -2 g_{1,i\partial_r}$$
the radial stress
$$ T_{rr} \coloneqq 4 g_{\partial_r,\partial_r} + 2(p-1) v - \left(\partial_r^2 + \frac{d-1}{r} \partial_r\right) g_{1,1}$$
and the angular stress
$$ T_{\omega\omega} \coloneqq 4 g_{\partial_\omega,\partial_\omega} + 2(p-1) v - \left(\partial_r^2 + \frac{d-1}{r} \partial_r\right) g_{1,1}$$
then one has the conservation laws \eqref{ping-2r}, \eqref{ping-3r} with removable singularity at $r=0$.
\item[(vii)]  The functions $g_{1,1}, g_{\partial_r, \partial_r}, g_{\partial_\omega, \partial_\omega}, v$ are even in $r$, while $g_{1,i\partial_r}$ is odd in $r$.  Furthermore, $g_{\partial_r,\partial_r} - g_{\partial_\omega,\partial_\omega}$ vanishes on the time axis $r=0$.
\end{itemize}
\end{theorem}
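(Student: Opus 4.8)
The plan is to prove Theorem~\ref{main-6} by continuing the elimination scheme of the preceding sections, removing the remaining fields one at a time until only a single scalar field $W$ survives, and then writing $W$ down explicitly. Concretely I would: (1) fix $v$ at the outset to be a sufficiently small positive multiple of $\rho^{-\frac{4}{p-1}-2}$ (which is smooth on $H_1$, even in $r$, and has the required scaling), so that $v$ enters the remaining equations only as a mild fixed forcing term; (2) eliminate the radial stress $g_{\partial_r,\partial_r}$ by integrating the momentum conservation law \eqref{ping-3r}; (3) eliminate the angular stress $g_{\partial_\omega,\partial_\omega}$, which after step (2) is almost completely unconstrained; (4) solve the mass conservation law \eqref{ping-2r} for $g_{1,1},g_{1,i\partial_r}$ via a potential ansatz, reducing everything to $W$; and (5) exhibit a suitable $W$.

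For step (2): writing $L\coloneqq \partial_r^2+\frac{d-1}{r}\partial_r$ and using $T_{0r}=-2g_{1,i\partial_r}$, the law \eqref{ping-3r} is equivalent to $\partial_r\bigl(r^{d-1}T_{rr}\bigr)=(d-1)r^{d-2}T_{\omega\omega}+2r^{d-1}\partial_t g_{1,i\partial_r}$, and since $r^{d-1}T_{rr}\to 0$ as $r\to 0$ (as $T_{rr}$ is bounded and $d\ge 3$) this integrates to
$$ g_{\partial_r,\partial_r}(t,r)=\tfrac14\Bigl(Lg_{1,1}-2(p-1)v+r^{1-d}\!\int_0^r\bigl[(d-1)\tilde r^{d-2}T_{\omega\omega}+2\tilde r^{d-1}\partial_t g_{1,i\partial_r}\bigr](t,\tilde r)\,d\tilde r\Bigr), $$
where $T_{\omega\omega}=4g_{\partial_\omega,\partial_\omega}+2(p-1)v-Lg_{1,1}$; the field $g_{\partial_r,\partial_r}$ does not appear on the right. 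I would take this formula as the \emph{definition} of $g_{\partial_r,\partial_r}$. With this definition \eqref{ping-3r} holds automatically, the correct scaling of $g_{\partial_r,\partial_r}$ and its smoothness away from $\{t=0\}$ are automatic, and comparing the two sides of the integrated law as $r\to 0$ forces $T_{rr}(t,0)=T_{\omega\omega}(t,0)$, i.e.\ $g_{\partial_r,\partial_r}-g_{\partial_\omega,\partial_\omega}$ vanishes on the time axis --- precisely the last clause of (vii). The only genuine requirements left are: (a) that $g_{\partial_r,\partial_r}$ extend smoothly up to $t=0$, which by scaling is a condition on the $\tilde r\to\infty$ behaviour of the integrand, and which amounts to finitely many linear ``asymptotic vanishing''/renormalisation conditions on the far-field and Taylor coefficients of $g_{\partial_\omega,\partial_\omega},g_{1,1},g_{1,i\partial_r},v$ (the precise list depending, as with \eqref{dak}, on whether $d-2-\tfrac{4}{p-1}$ is a non-negative even integer); and (b) the positive-definiteness inequalities \eqref{gstorm}, \eqref{gstorm-2}, which now reduce to a pointwise \emph{lower} bound on the integral.

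For step (3), after step (2) the field $g_{\partial_\omega,\partial_\omega}$ enters only through its own positivity, its scaling/parity, and conditions (a) and (b) --- and increasing $g_{\partial_\omega,\partial_\omega}$ pointwise makes the integral, hence $g_{\partial_r,\partial_r}$, larger. Choosing $g_{\partial_\omega,\partial_\omega}$ to be a large multiple of $\rho^{-\frac{4}{p-1}-2}$ plus a correction pinning down the finitely many far-field coefficients required by (a) disposes of $g_{\partial_\omega,\partial_\omega}$ and leaves one to produce $g_{1,1},g_{1,i\partial_r}$ (with $v$ already fixed) obeying \eqref{ping-2r}, the scaling/parity constraints, $g_{1,1}>0$, and the residual solvability conditions inherited from (a). For step (4), the ansatz
$$ g_{1,1}=2r^{1-d}\partial_r(r^d W),\qquad g_{1,i\partial_r}=r^{1-d}\partial_t(r^d W)=r\,\partial_t W $$
solves \eqref{ping-2r} identically; demanding that $W$ be smooth on $H_1$, even in $r$, and scale like $\rho^{-\frac{4}{p-1}}$ produces exactly the required scaling and parity of $g_{1,1},g_{1,i\partial_r}$, and by continuous self-similarity $W$ is determined by a single smooth even profile of one variable with prescribed behaviour at $r=0$ and at $r=\infty$. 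For step (5) I would choose this profile explicitly: a sufficiently slowly varying positive profile (so that $g_{1,1}=2dW+2r\partial_r W>0$, using $\frac{4}{p-1}<d$) tuned so that the finitely many residual conditions from (a) hold; the remaining inequalities $g_{\partial_r,\partial_r}>0$, $g_{\partial_\omega,\partial_\omega}>0$ and \eqref{gstorm} are then automatic on a fundamental domain (the constraints are scale-invariant) because $g_{\partial_r,\partial_r}$ has been inflated by the choice of $g_{\partial_\omega,\partial_\omega}$ in step (3).

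The main obstacle I expect is the interplay between the smoothness-at-$t=0$ conditions (a) on the momentum integral and the rigidity of the potential ansatz: one must check that (a) really is only finitely many linear constraints and that it can be met simultaneously with all the positivity requirements and with the single surviving degree of freedom $W$. This is exactly where the energy-supercriticality \eqref{energy-supercrit} enters --- just as the heuristic around \eqref{eqk} shows the local energy constraint becomes ``dense'' once there is infinite energy near spatial infinity, in the supercritical regime the analogous renormalised momentum functionals at $r=\infty$ are unbounded, so the conditions (a) are non-obstructive and can be absorbed, e.g.\ into the large free part of $g_{\partial_\omega,\partial_\omega}$. Carrying out the bookkeeping of the exact coefficient list in (a), and verifying positivity throughout, is the technical heart of the argument.
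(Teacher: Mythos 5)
Your elimination scheme matches the paper's (the paper reduces \ref{main-6} to $v=0$ via Theorem \ref{main-7}, then eliminates $g_{\partial_r,\partial_r}$ by integrating the momentum law subject to positivity and renormalisation constraints (Theorem \ref{main-8}), then eliminates $g_{\partial_\omega,\partial_\omega}$ (Theorem \ref{main-9}), then uses exactly your $W$--ansatz). Your step~(1) is a cosmetic reordering of the paper's $v$--absorption. Your step~(2) formula and the observation that it forces $g_{\partial_r,\partial_r}-g_{\partial_\omega,\partial_\omega}=0$ at $r=0$ are correct and identical in substance to the paper.

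However, steps (3) and (5) contain a genuine gap. You propose to dispose of the angular stress by taking $g_{\partial_\omega,\partial_\omega}$ to be ``a large multiple of $\rho^{-\frac{4}{p-1}-2}$ plus a correction pinning down the finitely many far-field coefficients.'' This works only when $d-3-\tfrac{4}{p-1}\ge 0$, where the renormalisation condition constrains the far-field Taylor coefficients of $S_2$ and a large $A\rho^{-\frac{4}{p-1}-2}$ piece can be added freely (this is precisely what the paper does, dividing into stress-critical and stress-supercritical cases). But in the stress-subcritical case $d-3-\tfrac{4}{p-1}<0$ the renormalisation condition is the \emph{integral} equation $\int_0^\infty S_2(1,r)\,dr=0$, which fixes the $r^{d-2}$-weighted total mass of the positive field $g_{\partial_\omega,\partial_\omega}$ to be $-\tfrac{1}{d-1}\int_0^\infty S_1(1,r)\,dr$; you cannot inflate $g_{\partial_\omega,\partial_\omega}$ at all. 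The interaction of this mass constraint with the positivity requirement on $g_{\partial_r,\partial_r}$ produces a non-trivial \emph{pointwise nonlinear} inequality on $g_{1,1},g_{1,i\partial_r}$ --- the paper's condition \eqref{con} involving $\bigl(\tfrac12\partial_r g_{1,1}\bigr)^2+g_{1,i\partial_r}^2$ --- not a finite list of linear conditions as your proposal supposes. Consequently your step~(5) claim that ``a sufficiently slowly varying positive profile'' for $W$ will do is also wrong in this regime: the paper's construction needs a $\delta$-tuned correction $-\delta^{3/2}\rho^{-\frac{4}{p-1}}\psi\bigl(\tfrac{t}{\delta\rho}\bigr)$ whose $t$-gradient blows up like $\delta^{-1/2}$ near $t=0$, precisely to force $\partial_t g_{1,i\partial_r}$, hence $S_1$, to be large and negative there and make \eqref{con} hold. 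The energy-supercriticality is used here in a quantitatively specific way (the exponent $\tfrac{2}{p-1}-\tfrac{d-2}{2}$ being negative), not merely through the qualitative heuristic that the far-field momentum functional is unbounded.
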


Let us now see how Theorem \ref{main-6} implies Theorem \ref{main-5}.  By Theorem \ref{main-6}, we may find fields $g_{1,1},g_{\partial_r,\partial_r}, g_{\partial_\omega,\partial_\omega}, g_{1,i\partial_r}, v$ obeying the conclusions of that theorem.  Define the energy density $E: H_d \to \R$ by the formula \eqref{E-def}.  Clearly $E$ is smooth and scales like $\rho^{-\frac{4}{p-1}-2}$.  As in the previous discussion, we let $k$ be the largest integer obeying \eqref{dak}, so that $k \geq 0$; then $E(1,r)$ has an asymptotic expansion of the form \eqref{ss2} as $r \to \infty$ for some polynomial $Q_k$ of degree at most $k$.  Let us call the energy density $E$ \emph{good} if the following condition is satisfied:
\begin{itemize}
\item If strict inequality holds in \eqref{dak}, we call $E$ good if we have the asymptotic vanishing property \eqref{eqk}.  (Note that the limit in \eqref{eqk} exists because the integrand will be absolutely integrable, thanks to \eqref{ss2}.)
\item If instead equality holds in \eqref{dak}, we call $E$ good if the degree $k$ component of $Q_k$ vanishes, or equivalently that $Q_k$ has degree at most $k-1$.
\end{itemize}

Let us suppose first that $E$ is good, and conclude the proof of Theorem \ref{main-5}.  Using the data provided by Theorem \ref{main-6}, and comparing the conclusions of that theorem with that of Theorem \ref{main-5}, we see that it will suffice to produce smooth fields $g_{\partial_r,\partial_t}, g_{1,i\partial_t}: H_1 \to \R$ scaling like $\rho^{-\frac{4}{p-1}-3}$ and $\rho^{-\frac{4}{p-1}-2}$ respectively obeying the potential identity \eqref{potential-r} and the energy conservation law \eqref{ping-4r}, where $J_r$ is defined by \eqref{jr-def}; also, we require $g_{\partial_r,\partial_t}$ to be odd in $r$, and $g_{1,i\partial_t}$ to be even in $r$.

It is clear from \eqref{potential-r} how one should construct $g_{1,i\partial_t}$, namely one should set
$$ g_{1,i\partial_t} \coloneqq (p+1) v - 
\frac{1}{2} \left(\partial_r^2 + \frac{d-1}{r} \partial_r\right) g_{1,1} + g_{\partial_r, \partial_r} + (d-1) g_{\partial_\omega,\partial_\omega}.$$
Clearly $g_{1,i\partial_t}$ is smooth on $H_1$ and even in $r$, thanks to (vii).  It is clear from the scaling laws for $v,g_{1,1},g_{\partial_r,\partial_r},g_{\partial_\omega, \partial_\omega}$ that the field $g_{1,i\partial_t}$ scales like $\rho^{-\frac{4}{p-1}-2}$ as required, and the identity \eqref{potential-r} is clear from construction.

In a similar fashion, after using \eqref{jr-def} to rewrite \eqref{ping-4r} as
$$ \partial_t (r^{d-1} E) = \partial_r( r^{d-1} g_{\partial_r,\partial_t} )$$
it is clear from the fundamental theorem of calculus that we should define $g_{\partial_r,\partial_t}$ by the formula
\begin{equation}\label{toby}
g_{\partial_r,\partial_t}(t,R) \coloneqq \frac{1}{R^{d-1}} \int_0^R \partial_t E(t,r)\ r^{d-1} dr 
\end{equation}
in the interior $(0,+\infty) \times \R$ of $H_1$, where we adopt the convention $\int_0^R = -\int_R^0$ when $R$ is negative.  Note that the expression $(t,R) \mapsto \int_0^R \partial_t E(t,r)\ r^{d-1} dr$ is smooth and vanishes to order at least $d$ on the time axis $R=0$ when $t>0$, so the above definition of $g_{\partial_r,\partial_t}(t,R)$ extends smoothly to the entire interior of $H_1$ (including the time axis).  Since $E$ is even in $r$ and scales like $\rho^{-\frac{4}{p-1}-2}$,  $g_{\partial_r,\partial_t}$ is odd in $r$ and scales like $\rho^{-\frac{4}{p-1}-3}$ in the interior of $H_1$.  After defining $J_r$ by \eqref{jr-def}, we see from the fundamental theorem of calculus that \eqref{ping-4r} is obeyed in the interior of $H_1$.  To complete the list of requirements stated in Theorem \ref{main-5}, it will suffice to show that $g_{\partial_r,\partial_t}$ extends smoothly to the boundary component $\{ (0,r): r \neq 0 \}$ of $H_1$.  As $g_{\partial_r,\partial_t}$ is odd in $r$ and scales like $\rho^{-\frac{4}{p-1}-3}$, it suffices to show that $t \mapsto g_{\partial_r,\partial_t}(t,1)$ can be smoothly extended to $t=0$.  From \eqref{toby} we have
$$ g_{\partial_r,\partial_t}(t,1) = \partial_t \int_0^1 E(t,r)\ r^{d-1} dr $$
so it will suffice to show that the function $f \colon t \mapsto \int_0^1 E(t,r)\ r^{d-1} dr$ for $t>0$ can be smoothly extended to $t=0$.

From \eqref{sab} one has
\begin{equation}\label{etrt}
 E(t,r) = t^{-\frac{2}{p-1}-1} E\left(1, \frac{r}{\sqrt{t}} \right)
\end{equation}
so from a change of variables we have
$$ f(t) = t^{\frac{d-2}{2}-\frac{2}{p-1}} \int_0^{t^{-1/2}} E(1,r)\ r^{d-1} dr.$$
Recalling the polynomial $Q_k$ introduced previously, we thus have $f(t) = U_1(t) + U_2(t)$, where
$$ U_1(t) \coloneqq t^{\frac{d-2}{2}-\frac{2}{p-1}} \int_0^{t^{-1/2}} r^{d-3-\frac{4}{p-1}} Q_k(1/r^2)\ dr$$
and
$$ U_2(t) \coloneqq t^{\frac{d-2}{2}-\frac{2}{p-1}} \int_0^{t^{-1/2}} (r^{d-1} E(1,r) - r^{d-3-\frac{4}{p-1}} Q_k(1/r^2))\ dr.$$
As $Q_k$ has degree at most $k$, and at most $k-1$ when equality occurs in \eqref{dak}, the expression $r^{d-3-\frac{4}{p-1}} Q_k(1/r^2)$ is a linear combination of monomials $r^{d-3-\frac{4}{p-1}-2j}$ where $0 \leq j \leq k$, or $0 \leq j \leq k-1$ when equality occurs in \eqref{dak}.  In particular, from \eqref{dak} we see that the exponent in these monomials is strictly greater than $-1$, so the integral is absolutely convergent.  Performing the integral, we see that $U_1(t)$ is a polynomial in $t$ and thus clearly smoothly extendible to $t=0$.  It thus remains to show that $U_2$ is also smoothly extendible to $t=0$.

First suppose that strict inequality occurs in \eqref{dak}.  From \eqref{ss2} we know that the integrand $r^{d-1} E(1,r) - r^{d-3-\frac{4}{p-1}} Q_k(1/r^2)$ is absolutely integrable near $r=\infty$; from the smoothness of $E$ and the absolute integrability of the $U_1$ integrand we also have absolute integrability near $r=0$.  From \eqref{eqk} we thus have
\begin{equation}\label{fair}
 U_2(t) = - t^{\frac{d-2}{2}-\frac{2}{p-1}} \int_{t^{-1/2}}^\infty \left(r^{d-1} E(1,r) - r^{d-3-\frac{4}{p-1}} Q_k(1/r^2)\right)\ dr.
\end{equation}
Making the change of variables $r = \frac{1}{\sqrt{st}}$ and noting from \eqref{etrt} that
$$ E\left(1,\frac{1}{\sqrt{st}}\right) = E(st,1) (st)^{\frac{2}{p-1}+1},$$
this becomes
$$ U_2(t) = -\frac{1}{2} \int_0^1  \frac{E(st,1) - Q_k(st)}{s^{k+1}}\ s^{\frac{2}{p-1} - \frac{d}{2} + k} ds.$$
The function $(s,t) \mapsto E(st,1) - Q_k(st)$ is smooth and vanishes to order $k+1$ at $s=0$ thanks to \eqref{ss2} and rescaling, so the factor $\frac{E(st,1) - Q_k(st)}{s^{k+1}}$ is smooth in $t \in [0,1]$, uniformly in $s \in [0,1]$.  By definition of $k$, the weight $s^{\frac{2}{p-1} - \frac{d}{2}+k}$ is absolutely integrable on $[0,1]$.  From repeated differentiation under the integral sign we conclude that $U_2$ extends smoothly to $[0,1]$ as desired.

Now suppose that equality occurs in \eqref{dak}.  Now we do not necessarily have the vanishing property \eqref{eqk}, so we need to adjust \eqref{fair} to
$$
 U_2(t) = A t^{\frac{d-2}{2}-\frac{2}{p-1}}  - t^{\frac{d-2}{2}-\frac{2}{p-1}} \int_{t^{-1/2}}^\infty (r^{d-1} E(1,r) - r^{d-3-\frac{4}{p-1}} Q_k(1/r^2))\ dr$$
for some quantity $A$ depending on $E, d, p$ but not on $t$.  But in this case $\frac{d-2}{2}-\frac{2}{p-1}$ is an integer, so the monomial $A t^{\frac{d-2}{2}-\frac{2}{p-1}}$ clearly extends smoothly to $t=0$.  Repeating the previous arguments we then obtain the smooth extension of $U_2$ to $t=0$ as required.

We have completed the derivation of Theorem \ref{main-5} from Theorem \ref{main-6} under the hypothesis that $E$ is good.  It remains to handle the situation in which the energy density $E$ produced by Theorem \ref{main-6} is not good.  In this case, we will perturb the data $g_{1,1},g_{\partial_r,\partial_r}, g_{\partial_\omega,\partial_\omega}, g_{1,i\partial_r}, v$ provided by Theorem \ref{main-6} to make the energy density $E$ good, without losing any of the properties listed in Theorem \ref{main-6}.

More precisely, we will consider perturbations of the form
\begin{align*}
\tilde g_{1,1} &\coloneqq g_{1,1} \\
\tilde g_{\partial_r,\partial_r} &\coloneqq g_{\partial_r,\partial_r} - (p-1) Z \\
\tilde g_{\partial_\omega,\partial_\omega} &\coloneqq g_{\partial_\omega,\partial_\omega} - (p-1) Z \\
\tilde g_{1,i\partial r} &\coloneqq g_{1,i\partial_r} \\
\tilde v &\coloneqq v + 2Z
\end{align*}
where $Z: H_1 \to \R$ is a smooth function, even in $r$ and vanishing on the time axis $r=0$, and scaling like $\rho^{-\frac{4}{p-1}-2}$ to be chosen later.  It is clear that this perturbed data $\tilde g_{1,1}, \tilde g_{\partial_r,\partial_r}, \tilde g_{\partial_\omega,\partial_\omega}, \tilde g_{1,i\partial_r}, \tilde v$ continues to obey the scaling properties (ii) and symmetry properties (vii) of Theorem \ref{main-6}; the conservation laws (vi) are also maintained since the densities $T_{00}, T_{0r}, T_{rr}, T_{\omega\omega}$ are completely unchanged by this perturbation.  The positive definite inequalities (i) and the defocusing property (v) might not be preserved in general, but will be maintained if the perturbation $Z$ is sufficiently small in a suitable (scale-invariant) sense which we will make precise later.  Finally, from \eqref{E-def} we see that the perturbed energy density $\tilde E$ is related to the original energy density $E$ by the formula
\begin{equation}\label{firm}
\tilde E = E + \left(2 - \frac{d(p-1)}{2}\right) Z.
\end{equation}
In the energy-critical situation $p > 1 + \frac{4}{d-2}$, we avoid the mass-critical exponent $p = 1 + \frac{4}{d}$ and so the expression $2 - \frac{d(p-1)}{2}$ appearing in \eqref{firm} is non-zero (in fact it is positive).  This gives us substantial flexibility to modify the energy density $E$, and in particular to perturb it to be good.

We turn to the details.
First suppose that strict inequality occurs in \eqref{dak}.  We let $B$ denote the quantity
\begin{equation}\label{A-def}
 B \coloneqq \int_0^\infty (r^{d-1} E(1,r) - r^{d-3-\frac{4}{p-1}} Q_k(1/r^2))\ dr,
\end{equation}
which is well-defined since the integrand is absolutely integrable.  
We introduce a smooth nonnegative even function $\psi: \R \to \R$, supported in $[-2,-1] \cup [1,2]$, and normalised so that
$$ \int_0^\infty \psi(r)\ r^{d-1} dr = 1.$$
We let $R>1$ be a large quantity to be chosen later, and use the perturbation
$$ Z(t,r) \coloneqq -\frac{B}{2-\frac{d(p-1)}{2}} R^{-d} t^{-\frac{2}{p-1}-1} \psi\left( \frac{r}{R t^{1/2}} \right)$$
for $t > 0$, with $Z(t,r)$ vanishing at $t=0$.  By construction, $Z: H_1 \to \R$ is smooth, even, and scales like $\rho^{-\frac{4}{p-1}-2}$, with the function $r \mapsto Z(1,r)$ supported on $[-2R,-R] \cup [R,2R]$ and obeying the normalisation
$$ \int_0^\infty Z(t,r)\ r^{d-1} = \frac{B}{2-\frac{d(p-1)}{2}}.$$
Comparing this with \eqref{firm} and \eqref{A-def} we see that
$$ \int_0^\infty \left(r^{d-1} \tilde E(1,r) - r^{d-3-\frac{4}{p-1}} Q_k(1/r^2)\right)\ dr = 0$$
so that $\tilde E$ is good (note that $\tilde E$ obeys the same asymptotics \eqref{ss2} as $E$ with the same polynomial $Q_k$).  It remains to choose the parameter $R$ so that the perturbed fields $\tilde g_{1,1}, \tilde g_{\partial_r,\partial_r}, \tilde g_{\partial_\omega,\partial_\omega}, \tilde g_{1,i\partial_r}, \tilde v$ continue to obey the properties (i), (v).

We begin with (v) for $\tilde v$.  By hypothesis, the original field $v$ is continuous, scales like $\rho^{-\frac{4}{p-1}-2}$, and everywhere positive, which (by the compactness of $H_1/T^\Z$) implies a pointwise bound
$$ v(t,r) > \eps \rho^{-\frac{4}{p-1} - 2}$$
on $H_1$ for some $\eps>0$.  In order for $\tilde V$ to also obey (v), it thus suffices to obtain the pointwise bound
$$ \frac{B}{2-\frac{d(p-1)}{2}} R^{-d} t^{-\frac{2}{p-1}-1} \psi\left( \frac{r}{R t^{1/2}} \right) \leq \eps \rho^{-\frac{4}{p-1} - 2}.$$
On the support of $\psi( \frac{r}{R t^{1/2}} )$, $t$ is comparable to $(R^{-1} \rho)^2$, so the left-hand side is $O( A R^{\frac{4}{p-1} - d + 2} \rho^{-\frac{4}{p-1} - 2})$.  As we are in the energy-supercritical situation, the exponent $\frac{4}{p-1} - d + 2$ is negative, and so we obtain the required bound if $R$ is large enough.

Similarly, from (i), scaling and compactness we obtain the pointwise bounds
$$ g_{\partial_r,\partial_r}, g_{\partial_\omega,\partial_\omega} > \eps' \rho^{-\frac{4}{p-1} - 2}$$
and
$$ g_{\partial_r,\partial_r} - \frac{ \left(\frac{1}{2} \partial_r g_{1,1}\right)^2 + g_{1,i\partial_r}^2}{g_{1,1}} > \eps' \rho^{-\frac{4}{p-1} - 2}$$
on $H_1$ for some $\eps'>0$, and by arguing as before we see that these properties will be preserved by the perturbation if $R$ is large enough.  The claim follows.

Now suppose instead that \eqref{dak} holds with equality; from energy-supercriticality this implies that $k$ is positive.  We write
\begin{equation}\label{qkk}
 Q_k(s) = Q_{k-1}(s) + C s^k 
\end{equation}
for all $s \in \R$ and some real number $C$, where $Q_{k-1}$ is a polynomial of degree at most $k-1$.  We let $\eta: \R \to \R$ be a smooth nonnegative even function, vanishing near the origin and equal to $1$ near $\pm \infty$, let $R \geq 1$ be a large parameter to be chosen later, and set
\begin{equation}\label{fatr}
 Z(t,r) \coloneqq -\frac{C}{2-\frac{d(p-1)}{2}} |r|^{-d} t^k \eta\left( \frac{r}{R t^{1/2}} \right)
\end{equation}
on $H_1$, with the convention that $\eta(\frac{r}{R t^{1/2}}) = 1$ when $t=0$.
It is clear that $Z: H_1 \to \R$ is smooth, even in $r$, and vanishing near the time axis, and as \eqref{dak} holds with equality we have $Z$ scaling like $\rho^{-\frac{4}{p-1}-2}$ as required.  From \eqref{firm}, \eqref{ss2}, \eqref{qkk}, and \eqref{fatr} we have
$$ \tilde E(1,r) = r^{-2-\frac{4}{p-1}} (Q_{k-1}(1/r^2) + O( r^{-2k-2} ))$$
as $r \to \infty$, where the implied constant in the $O()$ notation can depend on $R$.  In particular, $\tilde E$ is good.  It remains to show that the properties in (i), (v) are maintained by the perturbation.  By repeating the previous arguments, it suffices to ensure that one has the pointwise bound
$$
\frac{C}{2-\frac{d(p-1)}{2}} |r|^{-d} t^k \eta( \frac{r}{R t^{1/2}} ) \leq 
\eps \rho^{-\frac{4}{p-1} - 2} $$
where $\eps>0$ is a quantity not depending on $R$.  But on the support of $\eta(\frac{r}{Rt^{1/2}})$, $|r|$ is comparable to $\rho$ and $t$ is $O( r^2 / R^2 )$, so the right-hand side is $O( C R^{-2k} \rho^{-\frac{4}{p-1} - 2} )$ (since \eqref{dak} holds with equality), and the claim follows by taking $R$ large enough.  
This completes the derivation of Theorem \ref{main-5} from Theorem \ref{main-6}.

It remains to prove Theorem \ref{main-6}.  This will be the objective of the remaining sections of the paper.

\section{Eliminating the potential}

We now make an easy reduction by eliminating the role of the potential energy density $v$.

Let $d \geq 1$ and $p>1$, and suppose we have fields
$g_{1,1}, g_{\partial_r,\partial_r}, g_{\partial_\omega,\partial_\omega}, g_{1,i\partial_r}, v$ obeying the properties claimed in Theorem \ref{main-6}.  If we then define the modified fields $\tilde g_{1,1}, \tilde g_{\partial_r,\partial_r}, \tilde g_{\partial_\omega,\partial_\omega}, \tilde g_{1,i\partial_r}, \tilde v$ by
\begin{align*}
\tilde g_{1,1} &\coloneqq g_{1,1} \\
\tilde g_{\partial_r,\partial_r} &\coloneqq g_{\partial_r,\partial_r} + \frac{p-1}{2} v \\
\tilde g_{\partial_\omega,\partial_\omega} &\coloneqq g_{\partial_\omega,\partial_\omega} + \frac{p-1}{2} v \\
\tilde g_{1,i\partial_r} &\coloneqq g_{1,i\partial_r} \\
\tilde v &\coloneqq 0
\end{align*}
then one easily verifies that these new fields also obey the claims of Theorem \ref{main-6}, except with the defocusing property $v>0$ replaced by $v=0$ (note that the new fields have exactly the same stresses $T_{rr}, T_{\omega\omega}$ as the original fields).  In the converse direction, it turns out that we can replace the defocusing property $v>0$ in Theorem \ref{main-6}(v) by $v=0$.  More precisely, we can deduce Theorem \ref{main-6} from

\begin{theorem}[Sixth reduction]\label{main-7}   Then there exist smooth fields $g_{1,1}, g_{\partial_r,\partial_r}, g_{\partial_\omega,\partial_\omega}, g_{1,i\partial_r}: H_1 \to \R$ obeying the following properties:
\begin{itemize}
\item[(i)] One has the positive definite inequalities 
\begin{align}
g_{1,1}, g_{\partial_\omega,\partial_\omega} &>0 \label{sa}\\
g_{\partial_r,\partial_r} &> \frac{ \left(\frac{1}{2} \partial_r g_{1,1}\right)^2 + g_{1,i\partial_r}^2}{g_{1,1}}\label{sa-2}
\end{align}
pointwise on $H_1$.
\item[(ii)] The fields $g_{1,1}, g_{\partial_r,\partial_r}, g_{\partial_\omega,\partial_\omega}, g_{1,i\partial_r}$ scale like $\rho^{-\frac{4}{p-1}}$, $\rho^{-\frac{4}{p-1}-2}$, $\rho^{-\frac{4}{p-1}-2}$, and $\rho^{-\frac{4}{p-1}-1}$ respectively.
\item[(vi)]  One has the mass conservation law
\begin{equation}\label{gtr}
\partial_{t} g_{1,1} = 2\left(\partial_r + \frac{d-1}{r}\right) g_{1,i\partial_r}
\end{equation}
and momentum conservation law
\begin{equation}\label{gtr-2}
 4 \left(\partial_r + \frac{d-1}{r}\right) g_{\partial_r,\partial_r} 
= 4 \frac{d-1}{r} g_{\partial_\omega,\partial_\omega} + \partial_r \left(\partial_r^2 + \frac{d-1}{r} \partial_r\right) g_{1,1} + 2 \partial_t g_{1,i\partial_r} 
\end{equation}
with removable singularity at $r=0$.
\item[(vii)]  The functions $g_{1,1}, g_{\partial_r, \partial_r}, g_{\partial_\omega, \partial_\omega}$ are even in $r$, while $g_{1,i\partial_r}$ is odd in $r$.  Furthermore, $g_{\partial_r,\partial_r} - g_{\partial_\omega,\partial_\omega}$ vanishes on the time axis $r=0$.
\end{itemize}
\end{theorem}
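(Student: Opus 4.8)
The plan is to derive Theorem~\ref{main-7} from a further (``seventh'') reduction in which the radial stress $g_{\partial_r,\partial_r}$ is eliminated by means of the momentum conservation law~\eqref{gtr-2}. First I would put \eqref{gtr-2} in divergence form: multiplying through by $\tfrac14 r^{d-1}$ and using $r^{d-1}(\partial_r+\tfrac{d-1}{r})f=\partial_r(r^{d-1}f)$ one obtains
\begin{equation*}
  \partial_r\bigl(r^{d-1}g_{\partial_r,\partial_r}\bigr)=(d-1)r^{d-2}g_{\partial_\omega,\partial_\omega}+S_1,\qquad S_1\coloneqq\tfrac14 r^{d-1}\Bigl(\partial_r\bigl(\partial_r^2+\tfrac{d-1}{r}\partial_r\bigr)g_{1,1}+2\partial_t g_{1,i\partial_r}\Bigr).
\end{equation*}
Since $g_{\partial_r,\partial_r}$ must be smooth and even in $r$, the product $r^{d-1}g_{\partial_r,\partial_r}$ vanishes to order $d-1$ at $r=0$, so the $r$-independent integration constant is forced to be zero and we are led to the explicit formula
\begin{equation*}
  g_{\partial_r,\partial_r}(t,r)=r^{1-d}\int_0^r\Bigl((d-1)s^{d-2}g_{\partial_\omega,\partial_\omega}(t,s)+S_1(t,s)\Bigr)\,ds.
\end{equation*}

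Next I would check that, \emph{given} the three remaining fields $g_{1,1},g_{\partial_\omega,\partial_\omega},g_{1,i\partial_r}$ with the scalings and parities of Theorem~\ref{main-7}(ii),(vii), this formula automatically produces a field with the required structure. A short count of parabolic orders and parities shows the integrand has the same parity as $d$ and vanishes to order $d-2$ at $r=0$, so $g_{\partial_r,\partial_r}$ is smooth on the interior of $H_1$, even in $r$, and scales like $\rho^{-\frac4{p-1}-2}$; Taylor expanding the integrand at $s=0$ gives $g_{\partial_r,\partial_r}(t,0)=g_{\partial_\omega,\partial_\omega}(t,0)$, so the requirement that $g_{\partial_r,\partial_r}-g_{\partial_\omega,\partial_\omega}$ vanish on the time axis is automatic, and \eqref{gtr-2} itself holds by the fundamental theorem of calculus. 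Hence the only conditions left to impose on $g_{1,1},g_{\partial_\omega,\partial_\omega},g_{1,i\partial_r}$ are: the mass conservation law~\eqref{gtr} (which does not involve $g_{\partial_r,\partial_r}$ and is kept verbatim), the positivity $g_{1,1},g_{\partial_\omega,\partial_\omega}>0$ from \eqref{sa}, the scaling and parity of (ii),(vii), and the single integral inequality obtained by inserting the formula above into \eqref{sa-2}, namely
\begin{equation*}
  r^{1-d}\int_0^r\Bigl((d-1)s^{d-2}g_{\partial_\omega,\partial_\omega}(t,s)+S_1(t,s)\Bigr)\,ds>\frac{\bigl(\tfrac12\partial_r g_{1,1}\bigr)^2+g_{1,i\partial_r}^2}{g_{1,1}}
\end{equation*}
on all of $H_1$. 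This is the content of the seventh reduction, with $g_{\partial_r,\partial_r}$ removed.

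The one genuine subtlety — and the step I expect to be the main obstacle — is that the reconstructed $g_{\partial_r,\partial_r}$ must extend smoothly up to the boundary $\{t=0,\ r\neq0\}$ of $H_1$, not merely on the interior. By continuous self-similarity this reduces, in exactly the same way as the reconstruction of $g_{\partial_r,\partial_t}$ from energy conservation in Section~\ref{energy-sec}, to the smoothness at $t=0$ of $t\mapsto\int_0^1\bigl((d-1)s^{d-2}g_{\partial_\omega,\partial_\omega}(t,s)+S_1(t,s)\bigr)\,ds$, which after rescaling becomes an assertion about the behaviour as $r\to\infty$ of $\int_0^r\bigl((d-1)s^{d-2}g_{\partial_\omega,\partial_\omega}(1,s)+S_1(1,s)\bigr)\,ds$: this integral must coincide with $r^{d-3-\frac4{p-1}}$ times a polynomial in $1/r^2$ of an appropriate degree, and in particular must carry no ``forbidden'' term — a logarithm, or a nonzero renormalized total contribution of the kind appearing in \eqref{eqk} and \eqref{ss2}. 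As in Section~\ref{energy-sec} one splits into the cases of strict inequality versus equality in a threshold condition of the same shape as \eqref{dak} (here with $d-3$ in place of $d-2$), the offending coefficient being required to vanish.

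As there, this last condition is ``dense'' rather than ``closed'': the angular stress $g_{\partial_\omega,\partial_\omega}$ enters the reduced theorem only through the positivity $g_{\partial_\omega,\partial_\omega}>0$ and the integral inequality displayed above (note that $S_1$, and hence the mass conservation law, involves only $g_{1,1}$ and $g_{1,i\partial_r}$), so when the asymptotic-vanishing condition fails one perturbs $g_{\partial_\omega,\partial_\omega}$ by a small, scale-invariant bump supported near spatial infinity, exploiting the infinite supply of angular stress in the supercritical regime; for the bump small enough this preserves positivity and the integral inequality, and it can be arranged to cancel the forbidden coefficient. The bookkeeping here — pinning down which polynomial coefficients are forced, matching the parities and orders of vanishing at $r=0$, and verifying that the perturbation does not destroy the positive-definiteness inequality — is the delicate part; the remainder is routine. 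The converse implication is immediate: Theorem~\ref{main-7} yields the reduced data simply by forgetting $g_{\partial_r,\partial_r}$, using \eqref{gtr-2} and the smoothness of $g_{\partial_r,\partial_r}$ to recover the integral identity, the inequality, and the asymptotic-vanishing condition.
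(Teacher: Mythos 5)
Your reduction strategy matches the paper's: you put the momentum law~\eqref{gtr-2} in divergence form, reconstruct $g_{\partial_r,\partial_r}$ by the integral $g_{\partial_r,\partial_r}(t,R) = R^{1-d}\int_0^R \bigl((d-1)s^{d-2}g_{\partial_\omega,\partial_\omega}(t,s)+S_1(t,s)\bigr)\,ds$, verify that parity, scaling, and the vanishing of $g_{\partial_r,\partial_r}-g_{\partial_\omega,\partial_\omega}$ at $r=0$ are automatic, and isolate the smooth extension to $\{t=0\}$ as the residual obstacle, handled by an asymptotic-vanishing condition as in Section~\ref{energy-sec}. This is exactly the paper's Theorem~\ref{main-8} and the derivation of Theorem~\ref{main-7} from it.

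Two places where your sketch is too loose. (a) Requiring the integral inequality ``on all of $H_1$'' is circular at $\{t=0\}$: in the stress-critical and stress-subcritical regimes the integrand $S_2(0,s)$ behaves like $s^{d-4-\frac{4}{p-1}}$ as $s\to0$ and is \emph{not} absolutely integrable there, so the left-hand side at $t=0$ only makes sense as the smooth extension you are trying to establish. The paper's Theorem~\ref{main-8}(viii) instead imposes the inequality on the slice $t=1$ together with an explicit gap $\eps\rho(1,R)^{-\frac{4}{p-1}-2}$, and the gap is what allows \eqref{sa-2} to survive the scaling-and-limiting argument to the boundary. (b) More seriously, your claim that the asymptotic-vanishing condition can always be arranged ``by perturbing $g_{\partial_\omega,\partial_\omega}$ by a small bump'' fails in the stress-subcritical regime $d-3-\frac{4}{p-1}<0$. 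There the condition reads $\int_0^\infty S_2(1,r)\,dr=0$, and since $g_{\partial_\omega,\partial_\omega}>0$ its contribution $(d-1)\int_0^\infty r^{d-2}g_{\partial_\omega,\partial_\omega}(1,r)\,dr$ is a strictly positive, finite number: the angular stress can only push $\int S_2$ in one direction. Vanishing is therefore possible only if $\int_0^\infty S_1(1,r)\,dr<0$, which is a constraint on $g_{1,1},g_{1,i\partial_r}$ alone and cannot be produced by any choice of $g_{\partial_\omega,\partial_\omega}$. This is precisely the constraint recorded as \eqref{con} in Theorem~\ref{main-9}(x); in the paper it is satisfied by a careful perturbation of the ansatz field $W$ in Section~\ref{conclude}, not by adjusting the angular stress.
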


Let us now see how Theorem \ref{main-7} implies Theorem \ref{main-6}.  Suppose that $g_{1,1}, g_{\partial_r,\partial_r}, g_{\partial_\omega,\partial_\omega}, g_{1,i\partial_r}, v$ obeys the properties claimed by Theorem \ref{main-7}.  Let $\eps>0$ be a small quantity to be chosen later, and introduce the modified fields
\begin{align*}
\tilde g_{1,1} &\coloneqq g_{1,1} \\
\tilde g_{\partial_r,\partial_r} &\coloneqq g_{\partial_r,\partial_r} - \frac{p-1}{2} \eps \rho^{-\frac{4}{p-1}-2} \\
\tilde g_{\partial_\omega,\partial_\omega} &\coloneqq g_{\partial_\omega,\partial_\omega} - \frac{p-1}{2} \eps \rho^{-\frac{4}{p-1}-2} \\
\tilde g_{1,i\partial_r} &\coloneqq g_{1,i\partial_r} \\
\tilde v &\coloneqq \eps \rho^{-\frac{4}{p-1}-2}.
\end{align*}
The properties (ii), (v), (vii) of Theorem \ref{main-6} are easily verified to be obeyed by these new fields.  Using \eqref{gtr}, \eqref{gtr-2} and the definitions of $T_{00}, T_{0r}, T_{rr}, T_{r\omega}$ in Theorem \ref{main-6}(vi), we see that the conservation laws \eqref{ping-2r}, \eqref{ping-3r} are obeyed by the original fields $g_{1,1}, g_{\partial_r,\partial_r}, g_{\partial_\omega,\partial_\omega}, g_{1,i\partial_r}$ (with $v=0$), and hence by the new fields $\tilde g_{1,1}, \tilde g_{\partial_r,\partial_r}, \tilde g_{\partial_\omega,\partial_\omega}, \tilde g_{1,i\partial r}, \tilde v$ since the stress-energy densities $T_{00}, T_{0r}, T_{rr}, T_{\omega\omega}$ for these new fields are identical to those for the original fields.  By using compactness as in the previous section, we also see that the positive definite inequalities (i) will also be obeyed if $\eps$ is small enough, and the claim follows.

It remains to prove Theorem \ref{main-7}.  This will be the objective of the remaining sections of the paper.

\begin{remark}  The reduction to the case $v=0$ does \emph{not} mean that the finite time blowup in Theorem \ref{main} is arising from a vanishing potential $F=0$, and indeed such a vanishing is not possible since the linear Schr\"odinger equation will not create singularities in finite time from smooth, compactly supported data.  Instead, the $v=0$ case roughly speaking corresponds to the case where $F(x)$ is very close to zero when $x$ lies in the range of the solution map $u: H_d \to \C^m$, but is allowed to be much larger than zero elsewhere; in particular, $\nabla F(x)$ does not need to vanish or be small on the range of $u$.
\end{remark}

\section{Eliminating the radial stress}

Having eliminated the potential energy density $v$ from the problem, we now turn our attention to eliminating the radial stress $g_{\partial_r,\partial_r}$.  To motivate this reduction, assume for the moment that the hypotheses and conclusions of Theorem \ref{main-7} hold.
Multiplying the momentum conservation law \eqref{gtr-2} by $\frac{1}{4} r^{d-1}$, we arrive at the identity
\begin{equation}\label{god}
 \partial_r ( r^{d-1} g_{\partial_r,\partial_r} ) = S_2
\end{equation}
where $S_2: H_1 \to \R$ is the function
\begin{equation}\label{tfdef}
S_2 \coloneqq (d-1) r^{d-2} g_{\partial_\omega,\partial_\omega} + S_1
\end{equation}
and $S_1: H_1 \to \R$ is the function 
\begin{equation}\label{FDEF}
 S_1 \coloneqq \frac{1}{4} r^{d-1} \left( \partial_r \left(\partial_r^2 + \frac{d-1}{r} \partial_r\right) g_{1,1} + 2 \partial_t g_{1,i\partial_r} \right ).
\end{equation}  
As $r^{d-1} g_{\partial_r,\partial_r}$ vanishes on the time axis $r=0$, we can therefore solve for $g_{\partial_r,\partial_r}(t,R)$ for $(t,R)$ in the interior of $H_1$ by the formula
$$
g_{\partial_r,\partial_r}(t,R) \coloneqq \frac{1}{R^{d-1}} \int_0^R S_2(t,r)\ dr,$$
noting that the right-hand side has a removable singularity at $R=0$ since the integral vanishes to order at least $d-1$ there; a Taylor expansion at $R=0$ then also reveals that $g_{\partial_r,\partial_r} - g_{\partial_\omega,\partial_\omega}$ vanishes at $R=0$.
However, it is not immediately clear that the right-hand side will extend smoothly to the boundary $\{ (0,R): R \neq 0 \}$ of $H_1$, due to the singularity of the integrand at the spacetime origin.  As in Section \ref{energy-sec}, this requires an additional ``good'' hypothesis on the asymptotic expansion of the right-hand side of \eqref{god}.  More precisely, we can deduce Theorem \ref{main-7} from

\begin{theorem}[Seventh reduction]\label{main-8}   Then there exist smooth fields $g_{1,1}, g_{\partial_\omega,\partial_\omega}, g_{1,i\partial_r}: H_1 \to \R$ obeying the following properties:
\begin{itemize}
\item[(i)] One has the positive definite inequalities \eqref{sa} pointwise on $H_1$.
\item[(ii)] The fields $g_{1,1}, g_{\partial_\omega,\partial_\omega}, g_{1,i\partial_r}$ scale like $\rho^{-\frac{4}{p-1}}$, $\rho^{-\frac{4}{p-1}-2}$, and $\rho^{-\frac{4}{p-1}-1}$ respectively.
\item[(vi)]  One has the conservation law \eqref{gtr}
with removable singularity at $r=0$.
\item[(vii)]  The functions $g_{1,1}, g_{\partial_\omega, \partial_\omega}$ are even in $r$, while $g_{1,i\partial_r}$ is odd in $r$.  
\item[(viii)]  There is an $\eps>0$ such that one has the pointwise inequality
$$\frac{1}{R^{d-1}} \int_0^R S_2(1,r)\ dr \geq \frac{ \left(\frac{1}{2} \partial_r g_{1,1}\right)^2 + g_{1,i\partial_r}^2}{g_{1,1}}(1,R) + \eps \rho(1,R)^{-\frac{4}{p-1}-2} 
$$
for all $R > 0$, where $S_2: H_1 \to \R$ is the function defined by \eqref{tfdef}.
\item[(ix)]  Let $k \geq -1$ be the largest integer such that
\begin{equation}\label{dang}
 d-3-\frac{4}{p-1}-2k \geq 0.
\end{equation}
As $S_2$ is smooth and scales like $\rho^{-\frac{4}{p-1}+d-4}$, there is an asymptotic of the form
\begin{equation}\label{sso}
 S_2(1,r)= r^{d-4-\frac{4}{p-1}} (R_k(1/r^2) + O( r^{-2k-2} ))
\end{equation}
as $r \to \infty$  for some polynomial $R_k$ of degree at most $k$ (this forces $R$ to vanish in case $k=-1$, where we adopt the convention that $0$ has degree $-\infty$).  If strict inequality holds in \eqref{dang}, we require that
\begin{equation}\label{satyr}
 \int_0^\infty (S_2(1,r) - r^{d-4-\frac{4}{p-1}} R_k(1/r^2))\ dr = 0 
\end{equation}
(note that the integrand is absolutely integrable by \eqref{dang}, \eqref{sso}, and the smoothness of $S_2$).  If instead equality holds in \eqref{dang} (which can only occur if $k \geq 0$), we require that the degree $k$ coefficient of $R_k$ vanishes, so that $R_k$ actually has degree at most $k-1$.
\end{itemize}
\end{theorem}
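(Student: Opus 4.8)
The plan is to produce the three fields in the order $g_{1,1},g_{1,i\partial_r}$ first (they are coupled only through the mass conservation law \eqref{gtr}), and then $g_{\partial_\omega,\partial_\omega}$, which is otherwise unconstrained apart from positivity, scaling, and parity, and which enters the integral hypotheses (viii), (ix) only through the additive term $(d-1)r^{d-2}g_{\partial_\omega,\partial_\omega}$ in $S_2$. To solve \eqref{gtr} I would use the potential ansatz
$$ g_{1,1} \coloneqq 2 r^{1-d}\partial_r(r^d W) = 2dW + 2r\partial_r W, \qquad g_{1,i\partial_r} \coloneqq r^{1-d}\partial_t(r^d W) = r\,\partial_t W $$
for a single scalar field $W\colon H_1\to\R$; using $(\partial_r+\tfrac{d-1}{r})(r^{1-d}f)=r^{1-d}\partial_r f$ one checks that \eqref{gtr} then holds identically. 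Taking $W$ even in $r$ makes $g_{1,1}$ even and $g_{1,i\partial_r}$ odd with removable singularities at $r=0$ (as in (vii)), and taking $W$ to scale like $\rho^{-4/(p-1)}$ produces the scalings in (ii); so this step reduces to exhibiting a smooth even $\rho$-homogeneous $W$ for which $g_{1,1}=2dW+2r\partial_r W$ is strictly positive on $H_1$.

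For $W$ I would take $W\coloneqq c\,\rho^{-4/(p-1)}=c(t^2+r^4)^{-1/(p-1)}$ for a positive constant $c$ (allowing, if needed for the asymptotic bookkeeping of the last step, the extra factor of a fixed bounded smooth function of $t/\rho^2$). This is smooth, even, and $\rho$-homogeneous of order $-4/(p-1)$, and a direct computation gives $g_{1,1}=2cW\big(d-\tfrac{4r^4}{(p-1)(t^2+r^4)}\big)\ge 2cW\big(d-\tfrac{4}{p-1}\big)$, which is strictly positive since the energy-supercriticality $p>1+\tfrac{4}{d-2}$ gives $\tfrac{4}{p-1}<d-2<d$; thus (i) holds for $g_{1,1}$, while $g_{1,i\partial_r}=r\partial_t W=-\tfrac{2c}{p-1}\, t\,r\,(t^2+r^4)^{-1/(p-1)-1}$ is smooth, odd, and $\rho$-homogeneous of order $-4/(p-1)-1$. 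With $W$ fixed, the function $S_1$ in \eqref{FDEF} becomes an explicit $\rho$-homogeneous function, and hence so does the entire right-hand side of the inequality in (viii).

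It then remains to choose $g_{\partial_\omega,\partial_\omega}>0$, even, and $\rho$-homogeneous of order $-4/(p-1)-2$ so that (viii), (ix) hold. Since $\tfrac1{R^{d-1}}\int_0^R S_2(1,r)\,dr$ is exactly the integrated radial stress of \eqref{god}, and $S_2=(d-1)r^{d-2}g_{\partial_\omega,\partial_\omega}+S_1$ with $S_1$ now fixed, both sides of the inequality in (viii) are $\rho$-homogeneous of order $-4/(p-1)-2$, and its left-hand side contains the term $\tfrac1{R^{d-1}}\int_0^R(d-1)r^{d-2}g_{\partial_\omega,\partial_\omega}(1,r)\,dr$. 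Taking $g_{\partial_\omega,\partial_\omega}$ to be a sufficiently large constant multiple of $\rho^{-4/(p-1)-2}$ makes this term dominate the (now fixed) right-hand side uniformly in $R>0$ --- one checks this separately as $R\to0$, for $R$ in a compact set, and as $R\to\infty$ (using \eqref{dang}) --- exactly as the unconstrained component $G_{\partial_t,\partial_t}$ was absorbed into the positive-definiteness argument for Theorem \ref{main-4}; this secures (viii). The remaining ``good'' requirement (ix) is a single linear constraint on the $r\to\infty$ expansion of $S_2$ (vanishing of one integral if strict inequality holds in \eqref{dang}, vanishing of one coefficient if equality holds), and is enforced by adding to $g_{\partial_\omega,\partial_\omega}$ a further perturbation supported at scales $r\sim R_0\,t^{1/2}$ with $R_0$ large, built exactly like the fields $Z$ of Section \ref{energy-sec} (cf. \eqref{fatr}); such a perturbation is pointwise negligible compared with the large base field, so it preserves positivity and (viii) with a slightly smaller $\eps$ while moving the relevant integral or coefficient to its target value.

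The step I expect to be the main obstacle is the joint handling of (viii) and (ix): condition (ix) is sensitive to the precise behaviour of $S_2$ at spatial infinity, which through $S_1$ sees the third radial derivative of $g_{1,1}$ and the time derivative of $g_{1,i\partial_r}$, so one must verify that the large base choice of $g_{\partial_\omega,\partial_\omega}$ made for (viii) does not itself obstruct (ix), and one must split according to whether equality holds in \eqref{dang}, using a mass-type perturbation in the strict case and a step-type perturbation in the equality case --- precisely mirroring and reusing the argument of Section \ref{energy-sec}. The remaining verifications (smoothness across $r=0$ via the factor theorem and Whitney's theorem, the scaling exponents, and the parities recorded in Table \ref{scalingexp}) are then routine.
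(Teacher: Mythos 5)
Your overall architecture matches the paper's: the paper also derives Theorem~\ref{main-8} by eliminating $g_{\partial_\omega,\partial_\omega}$ (passing through the intermediate Theorem~\ref{main-9}, which leaves only $g_{1,1},g_{1,i\partial_r}$), and then solves the mass-conservation law with the same ansatz $g_{1,1}=2r^{1-d}\partial_r(r^dW)$, $g_{1,i\partial_r}=r^{1-d}\partial_t(r^dW)$.

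There is, however, a genuine gap in the stress-subcritical and stress-critical cases, precisely at the step you flag as ``the main obstacle.'' In the stress-subcritical case, (ix) becomes $\int_0^\infty S_2(1,r)\,dr=0$, and since $S_2=(d-1)r^{d-2}g_{\partial_\omega,\partial_\omega}+S_1$ with $g_{\partial_\omega,\partial_\omega}>0$, this together with (viii) forces
$$ \frac{1}{R^{d-1}}\int_R^\infty S_1(t,r)\,dr \ \le\ -\frac{\left(\tfrac12\partial_r g_{1,1}\right)^2+g_{1,i\partial_r}^2}{g_{1,1}}(t,R) - \eps\,\rho(t,R)^{-\frac{4}{p-1}-2} $$
for all $R,t>0$ (this is \eqref{con}), a constraint that involves $g_{1,1},g_{1,i\partial_r}$ alone. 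Similarly, in the stress-critical case one is forced into $\lim_{r\to\infty}rS_1(1,r)\le 0$. These are conditions on $W$, not on $g_{\partial_\omega,\partial_\omega}$, and there is no reason the unperturbed $W=c\,\rho^{-4/(p-1)}$ should satisfy them; the paper explicitly observes that it need not. Your proposed fix --- taking $g_{\partial_\omega,\partial_\omega}$ large and then adding a mass- or step-type perturbation as in Section~\ref{energy-sec} --- cannot repair this: perturbing $g_{\partial_\omega,\partial_\omega}$ does not change $S_1$, and since $g_{\partial_\omega,\partial_\omega}$ must stay positive it can only push $\int_0^\infty S_2(1,r)\,dr$ upward, whereas (ix) pins that integral to zero and thereby caps the $L^1$ size of $g_{\partial_\omega,\partial_\omega}$; with that cap in force, (viii) near $R\to 0^+$ reduces to the constraint on $S_1$ above. (The analogy with the absorption of $G_{\partial_t,\partial_t}$ in Theorem~\ref{main-4} fails precisely because $G_{\partial_t,\partial_t}$ was unconstrained, whereas here $g_{\partial_\omega,\partial_\omega}$ is pinned by (ix).)

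The missing idea is that $W^0=\rho^{-4/(p-1)}$ must itself be perturbed \emph{before} $g_{\partial_\omega,\partial_\omega}$ is chosen. The paper replaces $W^0$ by $W^0-\delta^{3/2}\rho^{-4/(p-1)}\psi(t/(\delta\rho^2))$ for small $\delta$, a thin spike localized near the boundary slice $t=0$. This perturbation has only a size-$O(\delta^{3/2})$ effect on $g_{1,1}$ and its radial derivatives (so positivity of $g_{1,1}$ survives), but its time derivatives are amplified, producing $\partial_t g_{1,i\partial_r}\sim -\delta^{-1/2}\rho^{-\frac{4}{p-1}-2}\psi''(t/(\delta\rho^2))$ and hence a large negative contribution to $S_1$ concentrated in $t\lesssim\delta\rho^2$. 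That is what makes $\int_R^\infty S_1$ uniformly and strongly negative, so that the constraints forced by (viii) and (ix) become satisfiable; only then can one pick $g_{\partial_\omega,\partial_\omega}$ (base term matched to the $r\to\infty$ asymptotics of $S_1$, plus a large positive spike near the time axis for (viii)). Your ``extra factor of a fixed bounded smooth function of $t/\rho^2$'' is the right general shape, but without the $\delta$-sharpening mechanism --- a small pointwise perturbation of $W$ whose \emph{time derivative} is large --- the proposal as written does not go through in the sub- and critical stress regimes.
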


Let us now see how Theorem \ref{main-8} implies Theorem \ref{main-7}.  Let $d \geq 3$ and $p > 1 + \frac{4}{d-2}$, and let $g_{1,1}, g_{\partial_\omega,\partial_\omega}, g_{1,i\partial_r}: H_1 \to \R$ be as in Theorem \ref{main-8}.  The function $S_2$ defined in \eqref{tfdef} scales like $\rho^{-\frac{4}{p-1}+d-4}$, vanishes to order at least $d-2$ at $r=0$, and has the same parity in $r$ as $r^{d-2}$.  We may then define
\begin{equation}\label{sodd}
 g_{\partial_r,\partial_r}(t,R) \coloneqq \frac{1}{R^{d-1}} \int_0^R S_2(t,r)\ dr
\end{equation}
for $(t,R)$ in the interior of $H_1$.  The integral $\int_0^R S_1(t,r)\ dr$ vanishes to order at least $d-1$ at the time axis $R=0$, so there is a removable singularity on that axis; by Taylor expansion we see that $g_{\partial_r,\partial_r}-g_{\partial_\omega,\partial_\omega}$ vanishes.  It is also easy to see that $g_{\partial_r,\partial_r}$ is even in $r$ and scales like $\rho^{-\frac{4}{p-1}-2}$, and from the fundamental theorem of calculus we see that $g_{\partial_r,\partial_r}$ obeys \eqref{god}.  If we could show that $g_{\partial_r,\partial_r}$ extends smoothly to the boundary $\{ (0,R): R \neq 0 \}$ of $H_1$, then from Theorem \ref{main-8}(viii) we obtain \eqref{sa-2} at $(1,R)$ for all $R > 0$ (with a gap of at least $\eps \rho^{-\frac{4}{p-1}-2}$); using scaling, symmetry and a limiting argument we would obtain \eqref{sa-2} throughout $H_1$, and we would obtain all the requirements for Theorem \ref{main-7}.

Thus the only remaining difficulty is to ensure the smooth extension.  We argue as in Section \ref{energy-sec}.  By scaling and symmetry it suffices to show that the function $t \mapsto g_{\partial_r,\partial_r}(t,1)$ extends smoothly to $t=0$.  If strict inequality occurs in \eqref{dang}, then from \eqref{satyr}, \eqref{sodd} we can write
$$ g_{\partial_r,\partial_r}(1,R) = \frac{1}{R^{d-1}} \int_0^R r^{d-4-\frac{4}{p-1}} R_k(1/r^2)\ dr - \frac{1}{R^{d-1}} \int_R^\infty
(S_2(1,r) - r^{d-4-\frac{4}{p-1}} R_k(1/r^2))\ dr $$
and hence by rescaling
\begin{align*}
 g_{\partial_r,\partial_r}(t,1) &= t^{-\frac{2}{p-1}-1} g_{\partial_r,\partial_r}(1,t^{-1/2}) \\
&= Y_1(t) + Y_2(t)
\end{align*}
where the functions $Y_1,Y_2: (0,+\infty) \to \R$ are defined by the formulae
$$ Y_1(t) \coloneqq t^{\frac{d-3}{2} - \frac{2}{p-1}} \int_0^{t^{-1/2}} r^{d-4-\frac{4}{p-1}} R_k(1/r^2)\ dr$$
and
$$ Y_2(t) \coloneqq -t^{\frac{d-3}{2} - \frac{2}{p-1}} 
\int_{t^{-1/2}}^\infty (S_2(1,r) - r^{d-4-\frac{4}{p-1}} R_k(1/r^2))\ dr.$$
The function $Y_1$ is a polynomial and thus smoothly extends to $t=0$.  As for $Y_2$, we make the change of variables $r = (st)^{-1/2}$ to write
$$ Y_2(t) = -\frac{1}{2}
\int_0^1 \frac{S_2(st,1) - R_k(st)}{s^{k+1}}\ s^{\frac{2}{p-1} - \frac{d-3}{2} + k} ds.
$$
As in Section \ref{energy-sec}, $\frac{S_2(st,1) - R_k(st)}{s^{k+1}}$ is smooth in $t \in [0,1]$ uniformly in $s \in [0,1]$, and the weight $s^{\frac{2}{p-1} - \frac{d-3}{2} + k}$ is absolutely integrable, so we obtain a smooth extension to $t=0$ as required.  The case when equality occurs in \eqref{dang} is treated by adding a monomial term $A t^{\frac{d-3}{2} - \frac{2}{p-1}}$ to $Y_2$  precisely as in Section \ref{energy-sec}.

It remains to prove Theorem \ref{main-8}.  This will be the objective of the remaining sections of the paper.

\section{Eliminating the angular stress}

Now we turn to eliminating the angular stress $g_{\partial_\omega,\partial_\omega}$ from the problem.  It will be natural to divide into the \emph{stress-subcritical} case $d-3-\frac{4}{p-1} < 0$, the \emph{stress-critical} case $d-3-\frac{4}{p-1}=0$, and the \emph{stress-supercritical} case $d-3-\frac{4}{p-1} > 0$ (note that all three of these cases can occur in the energy-supercritical regime \eqref{energy-supercrit}).

Assume that all the conclusions of Theorem \ref{main-8} are satisfied.  In the stress-subcritical case $d-3-\frac{4}{p-1} < 0$, the exponent $k$ in Theorem \ref{main-8}(ix) is equal to $-1$, thus $R_k$ vanishes, $S_2$ is absolutely integrable, and the condition \eqref{satyr} becomes
$$\int_0^\infty S_2(1,r)\ dr = 0.$$
From Theorem \ref{main-8}(viii) we thus have
$$
\frac{1}{R^{d-1}} \int_R^\infty S_2(1,r)\ dr 
\leq -\frac{ \left(\frac{1}{2} \partial_r g_{1,1}\right)^2 + g_{1,i\partial_r}^2}{g_{1,1}}(1,R) - \eps \rho(1,R)^{-\frac{4}{p-1}-2} 
$$
for any $R>0$. Applying \eqref{tfdef}, \eqref{sa}, we obtain the constraint
$$
\frac{1}{R^{d-1}} \int_R^\infty S_1(1,r)\ dr \leq -\frac{ \left(\frac{1}{2} \partial_r g_{1,1}\right)^2 + g_{1,i\partial_r}^2}{g_{1,1}}(1,R) - \eps \rho(1,R)^{-\frac{4}{p-1}-2} 
$$
on the fields $g_{1,1}, g_{1,i\partial_r}$ for all $R>0$.  By scale invariance, we then have
\begin{equation}\label{con}
\frac{1}{R^{d-1}} \int_R^\infty S_1(t,r)\ dr 
\leq -\frac{ \left(\frac{1}{2} \partial_r g_{1,1}\right)^2 + g_{1,i\partial_r}^2}{g_{1,1}}(t,R) - \eps \rho(t,R)^{-\frac{4}{p-1}-2} 
\end{equation}
for all $t,R>0$.

Now suppose we are in the stress-critical case $d-3 - \frac{4}{p-1}=0$.  Then $k=0$, and from Theorem \ref{main-8}(ix) we have
$$ \lim_{r \to \infty} r S_2(1,r) = 0.$$
As $S_1$ scales like $\rho^{\frac{4}{p-1}+d-4} = \rho^{-1}$, the limit $\lim_{r \to \infty} r S_1(1,r)$ exists; from \eqref{tfdef}, \eqref{sa}, we conclude the constraint
$$ \lim_{r \to \infty} r S_1(1,r) \leq 0.$$

Finally, in the stress-supercritical case $d-3 - \frac{4}{p-1} > 0$, there is no obvious way to extract a constraint on $g_{1,1}, g_{1,i\partial_r}$ from the properties in Theorem \ref{main-8} that involve $g_{\partial_\omega,\partial_\omega}$.

As it turns out, the obstructions listed above to eliminating $g_{\partial_\omega,\partial_\omega}$ are essentially the only ones.  More precisely, Theorem \ref{main-8} is a consequence of

\begin{theorem}[Eighth reduction]\label{main-9}   Then there exist smooth fields $g_{1,1}, g_{1,i\partial_r}: H_1 \to \R$ obeying the following properties:
\begin{itemize}
\item[(i)] One has the positive definite inequality $g_{1,1} > 0$ pointwise on $H_1$.
\item[(ii)] $g_{1,1}$ and $g_{1,i\partial_r}$ scale like $\rho^{-\frac{4}{p-1}}$ and $\rho^{-\frac{4}{p-1}-1}$ respectively.
\item[(vi)]  One has the conservation law \eqref{gtr} on $H_1$ with removable singularity at $r=0$.
\item[(vii)]  The function $g_{1,1}$ is even in $r$, while $g_{1,i\partial_r}$ is odd in $r$. 
\item[(x)]  In the stress-subcritical case, we have the constraint \eqref{con} for all $R,t > 0$ and some $\eps>0$, where $S_1$ is defined by \eqref{FDEF}.  In the stress-critical case, we have the constraint 
$$ \lim_{r \to \infty} r S_1(1,r) < 0.$$
In the stress-supercritical case, we impose no constraint here.
\end{itemize}
\end{theorem}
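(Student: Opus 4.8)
The strategy is to dispose of the mass conservation law \eqref{gtr} once and for all via an explicit ansatz, reducing everything to a single scalar unknown, and then to write that unknown down by hand. Write $a\coloneqq\frac{4}{p-1}$. Introduce a smooth field $W\colon H_1\to\R$, even in $r$ and scaling like $\rho^{-a}$, and set
\begin{align*}
g_{1,1} &\coloneqq \frac{2}{r^{d-1}}\partial_r(r^d W) = 2dW + 2r\,\partial_r W, \\
g_{1,i\partial_r} &\coloneqq \frac{1}{r^{d-1}}\partial_t(r^d W) = r\,\partial_t W.
\end{align*}
Because the factors $r^{1-d}$ cancel after differentiation, these are manifestly smooth on $H_1$; $g_{1,1}$ is even and $g_{1,i\partial_r}$ is odd in $r$, giving (vii); counting parabolic orders against the scaling of $W$ yields the exponents in (ii); and a one-line computation (both sides equal $2d\,\partial_t W + 2r\,\partial_r\partial_t W$) shows \eqref{gtr} holds identically, giving (vi). So Theorem \ref{main-9} is reduced to producing one smooth even $W$ scaling like $\rho^{-a}$ such that $dW+r\,\partial_r W>0$ on $H_1$ (this is (i)), and such that (x) holds, where in \eqref{FDEF} the field $S_1$ is now the fixed parabolic-order-$4$ linear expression
$$ S_1 = \tfrac14 r^{d-1}\,\partial_r\Big(\partial_r^2+\tfrac{d-1}{r}\partial_r\Big)\big(2dW+2r\,\partial_r W\big) + \tfrac12 r^{d}\,\partial_t^2 W $$
applied to $W$.

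Next I would pass to a one-variable problem. Since $W$ scales like $\rho^{-a}$ it is determined by the profile $w(r)\coloneqq W(1,r)$ via $W(t,r)=t^{-a/2}w(r/\sqrt t)$ for $t>0$; here $w$ is smooth and even, and smoothness of $W$ across the slice $\{t=0,\ r\neq0\}$ forces $w(r)=c_\infty r^{-a}(1+O(r^{-2}))$ as $r\to\infty$ (expansion in even powers of $1/r$), with $c_\infty=W(0,1)$. One computes $g_{1,1}(1,r)=2dw+2rw'$, $g_{1,i\partial_r}(1,r)=-\tfrac a2 rw-\tfrac12 r^2 w'$, and $S_1(1,r)=\mathcal L w(r)$ for an explicit fourth-order ODE operator $\mathcal L$; in particular $g_{1,1}(1,r)\sim(2d-2a)c_\infty r^{-a}$ and $S_1(1,r)\sim\big(-\tfrac14 a(a+2)(a+2-d)(2d-2a)c_\infty+\tfrac12 W_{tt}(0,1)\big)r^{d-4-a}$ as $r\to\infty$, the point being that the subleading data of $w$ (equivalently the number $W_{tt}(0,1)$) is free and does not affect the leading coefficient of $g_{1,1}$. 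Conditions (i) and (x) then read: $2dw+2rw'>0$ everywhere; and, in the stress-subcritical case,
$$ \frac1{R^{d-1}}\int_R^\infty \mathcal L w(\sigma)\,d\sigma \le -\frac{(\tfrac12 g_{1,1}')^2+g_{1,i\partial_r}^2}{g_{1,1}}(1,R) - \varepsilon\,(1+R^4)^{-a/4-1/2}\qquad(\forall R>0), $$
in the stress-critical case $\lim_{r\to\infty}r\,\mathcal L w(r)<0$, and nothing else in the stress-supercritical case. In the first two cases this forces $S_1(1,\cdot)$ to be negative at spatial infinity, i.e.\ $W_{tt}(0,1)<\tfrac12 a(a+2)(a+2-d)(2d-2a)c_\infty$, a strictly negative quantity since $a<d-2$ (energy-supercriticality), $2d-2a>0$, and $c_\infty>0$ is forced by (i).

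Finally I would write down the explicit $W$. Start from the model $W_0(t,r)\coloneqq(t+r^2)^{-a/2}$: it is smooth on $H_1$, even, scales like $\rho^{-a}$, and
$$ 2dW_0+2r\,\partial_r W_0 = (t+r^2)^{-a/2-1}\big(2dt+(2d-2a)r^2\big)>0, $$
the positivity coming from $2d-2a>0$, itself a consequence of energy-supercriticality. In the stress-supercritical case $W_0$ already satisfies all requirements and one is done. In the stress-critical and stress-subcritical cases $W_0$ fails --- its $S_1$ is positive at spatial infinity --- so one adds a correction: take $W=W_0+\Phi$ with $\Phi$ an explicit smooth even field scaling like $\rho^{-a}$, concentrated at large $r$ (``near spatial infinity''), chosen so that $W_{tt}(0,1)$ drops below the threshold above and, in the subcritical case, so that $\tfrac1{R^{d-1}}\int_R^\infty S_1$ is dominated by a strictly negative multiple of $(1+R^4)^{-a/4-1/2}$ uniformly in $R$. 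Positivity of $2dW+2r\,\partial_r W$ survives because where $\Phi$ lives, $g_{1,1}$ has already decayed to size $\sim r^{-a}$ and only a small relative perturbation is needed, while the contribution of $\Phi$ to the relevant scale-invariant functionals of $S_1$ is unbounded --- the concrete form, at this final stage, of the ``infinite budget near spatial infinity'' noted in Section \ref{energy-sec}. Fixing the profile and amplitude of $\Phi$ and then verifying (i) and (x) on a fundamental domain for the scaling on $H_1$ (equivalently on the compact quotient $H_1/T^{\Z}$) by scale invariance completes the construction.

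The delicate point is exactly (x) in the stress-subcritical case: it is a pointwise inequality in $R$ coupling the order-$4$ quantity $\tfrac1{R^{d-1}}\int_R^\infty S_1$ to the manifestly nonnegative term $\tfrac{(\frac12 g_{1,1}')^2+g_{1,i\partial_r}^2}{g_{1,1}}$, so one must force $\tfrac1{R^{d-1}}\int_R^\infty S_1(1,r)\,dr$ below a strictly negative multiple of $(1+R^4)^{-a/4-1/2}$ for every $R>0$ while simultaneously keeping $g_{1,1}>0$; pinning down a concrete $W$ that does both uniformly, together with all the attendant smoothness, parity, and scaling bookkeeping, is the computational heart of the argument. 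The stress-critical case is a lighter version (only the leading coefficient of $S_1$ must be made negative), and the stress-supercritical case requires nothing beyond the positivity of the model $W_0$.
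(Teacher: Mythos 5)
Your ansatz $g_{1,1}=2r^{1-d}\partial_r(r^dW)$, $g_{1,i\partial_r}=r^{1-d}\partial_t(r^dW)$ is exactly the one used in Section \ref{conclude} of the paper, and your verification that it disposes of (ii), (vi), (vii) and reduces everything to a single scalar $W$ is correct. Your base profile $W_0=(t+r^2)^{-a/2}$ is a cosmetic variant of the paper's $W^0=\rho^{-a}=(t^2+r^4)^{-a/4}$; both satisfy $\partial_r(r^dW)>0$ by the same mechanism $a=\frac{4}{p-1}<d-2<d$, and either settles the stress-supercritical case. You have also correctly identified that in the stress-critical and stress-subcritical cases one must perturb $W_0$ by a term living near the boundary $t/\rho\to0$ (equivalently, near spatial infinity in the $t=1$ slice), and that the crux is the uniform pointwise inequality \eqref{con} coupling $R^{1-d}\int_R^\infty S_1$ to the nonnegative quadratic form on the right.

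What is missing, and what you yourself flag as ``the computational heart of the argument,'' is the actual construction of the perturbation $\Phi$ and the verification of (i) and (x) with it. This is a genuine gap rather than routine bookkeeping, because the perturbation cannot simply be ``concentrated at large $r$'': if $\Phi$ were just a smooth, $\rho^{-a}$-scaling bump supported where $r$ is large, its effect on $S_1$ would be of the same order as the terms on the right of \eqref{con} and there would be no free sign to exploit. The mechanism that actually works is a perturbation that is \emph{small in amplitude but rapidly varying in $t$} relative to the self-similar scale: the paper takes $W=W^0-\delta^{3/2}\rho^{-a}\psi(t/(\delta\rho))$ with $\psi$ a fixed compactly supported function having $\psi''\ge0$ on $[0,\infty)$ and $\psi''\equiv1$ near $0$. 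Because $g_{1,i\partial_r}=r\partial_tW$ carries one $t$-derivative and $S_1$ carries a second, this produces the crucial $\partial_t g_{1,i\partial_r}=-2\delta^{-1/2}\rho^{-a-2}\psi''(t/(\delta\rho))+O(1)$ in the region $r\asymp1$, $t=O(\delta)$, while all other fields entering (i) and the right side of \eqref{con} are perturbed only by $O(\delta^{1/2})$. One then gets $R^{1-d}\int_R^\infty S_1(1,r)\,dr\lesssim-\delta^{\frac{2}{p-1}-\frac{d-2}{2}}\rho^{-a-2}$ uniformly in $R\ge1$ (with the exponent negative by energy-supercriticality, so the bound blows up as $\delta\to0$), after which \eqref{con} follows by taking $\delta$ small. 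Without specifying the $\delta^{3/2}$--versus--$\delta^{-1/2}$ balance and the convexity $\psi''\ge0$ that keeps the contribution single-signed after integration in $r$, the qualitative appeal to an ``infinite budget near spatial infinity'' does not by itself force the pointwise bound to hold for all $R>0$ simultaneously with $g_{1,1}>0$. So the approach is the right one, but the proposal stops exactly at the step that requires a concrete choice and a quantitative estimate.
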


In the remainder of this section we show how Theorem \ref{main-8} implies Theorem \ref{main-7}.   Let $g_{1,1}, g_{1,i\partial_r}$, be as in Theorem \ref{main-8}.  It will suffice to locate a smooth field $g_{\partial_\omega,\partial_\omega}: H_1 \to \R$, scaling like $\rho^{-\frac{4}{p-1}-2}$, even in $r$, and vanishing at $r=0$, which is strictly positive and such that the function $S_2: H_1 \to \R$ defined by \eqref{tfdef} obeys the properties claimed in Theorem \ref{main-8}(viii), (ix).  

We begin with the stress-critical case $d-3-\frac{4}{p-1}=0$, which is the simplest.  From Theorem \ref{main-8}(x) we can write
\begin{equation}\label{rafc}
 \lim_{r \to \infty} r S_1(1,r) = -c 
\end{equation}
for some $c>0$.  Let $\psi: \R \to [0,1]$ be a smooth even function, supported on $[-2,2]$, that equals one on $[-1,1]$, and choose
$$ g_{\partial_\omega,\partial_\omega}(t,r) \coloneqq \frac{c}{d-1} \rho^{1-d} + A t^{\frac{1-d}{2}} \psi( r / t^2 )$$
for some large $A>0$ to be chosen later.  Clearly $g_{\partial_\omega,\partial_\omega}$ is strictly positive, smooth, even in $r$, and scales like $\rho^{-\frac{4}{p-1}-2} = \rho^{1-d}$.  From \eqref{tfdef}, \eqref{rafc} we see that
$$  \lim_{r \to \infty} r S_1(1,r) = 0.$$
It remains to establish the property in Theorem \ref{main-8}(viii) with (say) $\eps=1$.  That is to say, we need to show that
\begin{equation}\label{bound}
A \frac{1}{R^{d-1}} \int_0^R r^{d-2} \psi(r)\ dr \geq f(R)
\end{equation}
for all $R>0$, where
$$ f(R) \coloneqq 
\frac{ \left(\frac{1}{2} \partial_r g_{1,1}\right)^2 + g_{1,i\partial_r}^2}{g_{1,1}}(1,R) + \rho(1,R)^{-\frac{4}{p-1}-2}
- \frac{1}{R^{d-1}}  \int_0^R (S_1(1,r) + c r^{d-2} \rho^{1-d})\ dr.$$
The function $S_1(1,r) + c r^{d-2} \rho^{1-d}$ scales like $\rho^{-1}$, and by \eqref{rafc} we have
$$ \lim_{r \to \infty} r (S_1(1,r) + c r^{d-2} \rho^{1-d}) = 0,$$
so we have an asymptotic of the form
$$ S_1(1,r) + c r^{d-2} \rho^{1-d} = O(1/r^3)$$
as $r \to \infty$.  In particular, the integral $\int_0^R (S_1(1,r) + c r^{d-2} \rho^{1-d})\ dr$ is bounded in $R$.  The first two terms in the definition of $f(R)$ come from evaluating smooth functions scaling like $\rho^{-\frac{4}{p-1}-2} =\rho^{1-d}$ at $(1,R)$.  As such we conclude a bound of the form
$$ f(R) = O( (1+R)^{1-d} )$$
for all $R > 0$, where the implied constant does not depend on $A$.  On the other hand, from the construction of $\psi$, the expression $\frac{1}{R^{d-1}} \int_0^R r^{d-2} \psi(r)\ dr$ is bounded below by $\frac{1}{d-1}$ when $R \leq 1$ and by $\frac{1}{(d-1)R^{d-1}}$ for $R \geq 1$, so we obtain the required bound \eqref{bound} by choosing $A$ large enough.

A similar argument lets us treat the stress-supercritical case in which $d-3-\frac{4}{p-1}=2k$ for some positive integer $k$, as follows.  The function $S_1$ is smooth and scales like $\rho^{d-4-\frac{4}{p-1}} = \rho^{2k-1}$, and thus we have an asymptotic of the form
\begin{equation}\label{indo}
 S_1(1,r) = r^{2k-1} ( R_{k-1}(1/r^2) + c_k / r^{2k} + O( r^{-2k-2} ) )
\end{equation}
as $r \to +\infty$, for some real number $c_k$ and some polynomial $R_{k-1}$ of degree at most $k-1$.  Let $\psi: \R \to [0,1]$ be a smooth cutoff as before, let $A > 0$ be a large parameter to be chosen later, and set
$$ g_{\partial_\omega,\partial_\omega}(t,r) \coloneqq \left(-\frac{c_k}{d-1} |r|^{1-d} t^k + A |r|^{1-d+2k} \right) (1 - \psi(r/t^2)) + 
 + A t^{\frac{1-d}{2}+k} \psi( r / t^2 ).$$
If $A$ is large enough, it is easy to verify that $g_{\partial_\omega,\partial_\omega}$ is strictly positive, smooth, even in $r$, and scales like $\rho^{-\frac{4}{p-1}-2} = \rho^{1-d+2k}$.  From \eqref{tfdef}, \eqref{indo} we have the asymptotic
$$ S_1(1,r) = r^{2k-1} ( A + R_{k-1}(1/r^2) + O( r^{-2k-2} ) )$$
as $r \to +\infty$.  

It remains to establish the property in Theorem \ref{main-8}(viii) with (say) $\eps=1$.  As before, we rewrite this desired inequality as
\begin{equation}\label{bound-2}
A \frac{1}{R^{d-1}} \int_0^R (r^{d-2} \psi(r) + r^{2k-1} (1-\psi(r)))\ dr \geq f_k(R)
\end{equation}
for all $R>0$, where
$$ f_k(R) \coloneqq 
\frac{ \left(\frac{1}{2} \partial_r g_{1,1}\right)^2 + g_{1,i\partial_r}^2}{g_{1,1}}(1,R) + \rho(1,R)^{-\frac{4}{p-1}-2}
- \frac{1}{R^{d-1}}  \int_0^R \left(S_1(1,r) - c_k \frac{1-\psi(r/t^2)}{r}\right)\ dr.$$
As before, the first two terms $f_k(R)$ come from evaluating a smooth function scaling like $\rho^{-\frac{4}{p-1}-2} =\rho^{1-d+2k}$ at $(1,R)$, while the integrand $S_1(1,r) - c_k \frac{1-\psi(r/t^2)}{r}$ is of size $O( (1+r)^{2k-1} )$.  We conclude that
$$ f_k(R) = O( (1+R)^{1-d+2k} )$$
(with implied constant independent of $A$), while from direct computation we have
$$ \frac{1}{R^{d-1}} \int_0^R (r^{d-2} \psi(r) + r^{2k-1} (1-\psi(r)))\ dr \geq c (1+R)^{1-d+2k}$$
for all $R>0$ and some quantity $c>0$ depending on $d,k,\psi$.  The claim then follows by taking $A$ large enough.

It remains to prove Theorem \ref{main-9}.  This will be the objective of the final section of the paper.

\section{Conclusion of the argument}\label{conclude}

The mass conservation law \eqref{gtr} can be rewritten as
$$\partial_{t} (r^{d-1} g_{1,1}) = 2 \partial_r (r^{d-1} g_{1,i\partial_r}).$$
It is thus clear that this law will be satisfied for $r \neq 0$ (with removable singularity at $r=0$) if one uses the ansatz
\begin{align}
g_{1,1} &= 2 r^{1-d} \partial_r (r^d W) = 2 r \partial_r W + 2 d W \label{wand}\\
g_{1,i\partial_r} &= r^{1-d} \partial_t (r^d W) = r \partial_t W \label{wand-2}
\end{align}
for some smooth function $W: H_1 \to \R$.  In order to obey the conditions (i), (ii), (vii) of Theorem \ref{main-9}, we should impose the following conditions on $W$:
\begin{itemize}
\item[(i)]  One has $\partial_r (r^d W(t,r)) > 0$ for all $r > 0$ and $t \geq 0$.  Furthermore, $W(1,0) > 0$.
\item[(ii)]  $W$ scales like $\rho^{-\frac{4}{p-1}}$.
\item[(vii)]  $W$ is even in $r$.
\end{itemize}
It is clear that if $W$ is smooth and obeys the above properties (i), (ii), (vii), and $g_{1,1}, g_{1,i\partial_r}$ are then defined by \eqref{wand}, \eqref{wand-2}, then the properties (i), (ii), (vi), (vii) of Theorem \ref{main-9} are satisfied.  Such a function $W$ is easy to construct, indeed one can just take $W(t,r) \coloneqq \rho^{-\frac{4}{p-1}}$ (noting from the energy supercriticality hypothesis \eqref{energy-supercrit} that $d - \frac{4}{p-1} > 2 > 0$, hence the derivative $\partial_r (r^d W) = (\frac{d}{r} - \frac{4}{p-1} \frac{r^3}{\rho^4}) r^d W$ is positive for $r > 0$).  This already establishes Theorem \ref{main-9} in the stress-supercritical case $d-3 - \frac{4}{p-1} > 0$.

It remains to handle the stress-critical case $d-3 - \frac{4}{p-1} = 0$ and the stress-subcritical case $d-3 - \frac{4}{p-1} < 0$.  Here the difficulty is that there is an additional constraint in Theorem \ref{main-9}(x) that needs to be satisfied.  If one sets $W^0 \coloneqq \rho^{-\frac{4}{p-1}}$ and defines the initial fields $g^0_{1,1}, g^0_{1,i\partial_r}$ by the formulae \eqref{wand}, \eqref{wand-2}, that is to say that
\begin{align}
g_{1,1}^0 &= 2 r \partial_r W^0 + 2 d W^0 \label{wand-3}\\
g_{1,i\partial_r}^0 &= r \partial_t W^0 \label{wand-4}
\end{align}
and then defines the initial field $S_1^0$ by the analogue of \eqref{FDEF}, namely
$$ S_1^0 \coloneqq \frac{1}{4} r^{d-1} \left( \partial_r \left(\partial_r^2 + \frac{d-1}{r} \partial_r\right) g^0_{1,1} + 2 \partial_t g^0_{1,i\partial_r} \right ),$$
then there is no guarantee that the constraint in Theorem \ref{main-9}(x) will be obeyed for these choices of $g_{1,1}, g_{1,i\partial_r}$.  Instead, we select a smooth function $\psi: \R \to \R$ supported on $[-1,1]$, such that $\psi''(t) \geq 0$ for all $t \geq 0$ and $\psi''(t) = 1$ for $0 \leq t \leq 1/2$, let $\delta>0$ be an even smaller parameter, and let $W : H_1 \to \R$ be the function defined for all $(t,r) \in H_1$ by the formula
$$ W(t,r) \coloneqq W^0(t,r) - \delta^{3/2} \rho^{-\frac{4}{p-1}} \psi\left( \frac{t}{\delta \rho} \right),$$
and then define $g_{1,1}$, $g_{1,i\partial_r}$, $S_1$ by \eqref{wand}, \eqref{wand-2}, \eqref{FDEF}.
Clearly $W$ obeys the required properties (ii), (vii).  We now claim that the property (i) also holds if $\delta$ is small enough.  Note that $W(t,r)$ is equal to $W^0(t,r)$ unless $t = O(\delta \rho)$, thus it suffices to verify (i) in the regime $t = O(\delta \rho)$.  By rescaling we may normalise $r=1$ and $t=O(\delta)$.  In this regime we have
$$ \partial_r (r^d W(t,r)) = \partial_r (r^d W^0(t,r)) + O(\delta^{1/2})$$
and from the fact that $W^0$ obeys (i), the quantity $\partial_r (r^d W^0(t,r))$ is bounded away from zero uniformly in $\delta$ in the regime $r=1$, $t=O(\delta)$, so the claim follows.

We now claim that Theorem \ref{main-9}(x) holds for $\delta$ small enough.  In the stress-supercritical case there is nothing to prove.  In the remaining cases, we need to study the quantity $S_1(t,r)$.  By construction, this quantity is equal to $S_1^0(t,r)$ except in the regime $r = O(\delta \rho)$.  Now we rescale and study $S_1(t,1)$ in the regime $r = O(\delta)$.  From \eqref{wand}, \eqref{wand-2} we have
$$ g_{1,1} = g_{1,1}^0 - 2 r \delta^{3/2} \partial_r \left(\rho^{-\frac{4}{p-1}} \psi\left( \frac{t}{\delta \rho} \right)\right) + 2 d \delta^{3/2} \rho^{-\frac{4}{p-1}} \psi\left( \frac{t}{\delta \rho} \right)$$
and
$$ g_{1,i\partial_r} = g_{1,i\partial_r}^0 + r \delta^{3/2} \partial_t \left( \rho^{-\frac{4}{p-1}} \psi\left( \frac{t}{\delta \rho} \right)\right).$$
Using the identities $\partial_t \rho = \frac{t}{2\rho^3}; \quad \partial_r \rho = \frac{r^3}{\rho^3}$ we can obtain the bounds
\begin{align}
\partial_r^j g_{1,1} &= \partial_r^j g_{1,1}^0 + O( \delta^{3/2}) \label{hip}\\
g_{1,i\partial_r} &= g_{1,i\partial_r}^0 + O(\delta^{1/2}) \label{hop}\\
\partial_t g_{1,i\partial_r} &= - 2 \delta^{-1/2} \rho^{-\frac{4}{p-1}-2} \psi''\left(\frac{t}{\delta \rho}\right) + O(1)\label{hup}
\end{align}
for $j=0,1,2,3$ in the regime $r=1$, $t = O(\delta)$.  In particular, from \eqref{FDEF} we have the bounds
\begin{equation}\label{s11t}
 S_1(1,t) = -\delta^{-1/2} \rho^{-\frac{4}{p-1}-2} \psi''\left(\frac{t}{\delta \rho}\right)  + O(1)
\end{equation}
in the region $r=1$, $t=O(\delta)$; this bound is also true in the larger range $r=1$, $t=O(1)$ since $S_1 = S_1^0$ and $\psi'' = 0$ when $t$ is much larger than $\delta$.  In particular, for $\delta$ small enough we have
$$ \lim_{t \to 0^+} S_1(t,1) < 0;$$
as $S_1$ scales like $\rho^{-\frac{4}{p-1} + d-2}$, this is equivalent to
which by rescaling is equivalent to
$$ \lim_{r \to \infty} \rho^{\frac{4}{p-1}-d+2} S_1(1,r) < 0.$$
In the stress-critical case $d-3 - \frac{4}{p-1}=0$, this gives Theorem \ref{main-9}(x).  Now suppose we are in the stress-subcritical case $d-3 - \frac{4}{p-1}<0$.  From \eqref{s11t}, we have the bounds
$$ - \frac{1}{t} \int_0^t S_1(t',1) \left(\frac{t'}{t}\right)^{\frac{2}{p-1}-\frac{d-1}{2}}\ dt' \gg \delta^{-1/2} \left(1 + \frac{t}{\delta}\right)^{\frac{d-3}{2} - \frac{2}{p-1}} - O(1)$$
for all $0 < t \leq 1$ (note in the stress-subcritical case that the exponent $\frac{2}{p-1}-\frac{d-1}{2}$ is at least $-1$).  We have
$$ \delta^{-1/2} \left(1 + \frac{t}{\delta}\right)^{\frac{d-3}{2} - \frac{2}{p-1}} \gg \delta^{\frac{2}{p-1} - \frac{d-2}{2}}.$$
By energy supercriticality \eqref{energy-supercrit}, the exponent here is negative, and thus if $\delta$ is small enough we have
$$ - \frac{1}{t} \int_0^t S_1(t',1) \left(\frac{t'}{t}\right)^{\frac{2}{p-1}-\frac{d-1}{2}}\ dt' \gg \delta^{\frac{2}{p-1}-\frac{d-2}{2}}$$
for all $0 < t \leq 1$.  As $S_1$ scales like $\rho^{-\frac{4}{p-1}+d-2}$, this bound is equivalent to
$$ - \frac{1}{R^{d-1}} \int_R^\infty S_1(1,r)\ dr \gg \delta^{\frac{2}{p-1}-\frac{d-2}{2}} \rho^{-\frac{4}{p-1}-2}$$
for $1 \leq R < \infty$; since $S_1(1,r)=S_1^0(1,r) = O(1)$ when $0 \leq R \leq 1$, we conclude that this bound also holds for $0 < R < 1$ if $\delta$ is small enough.  Meanwhile, from \eqref{hip}, \eqref{hop} we have
$$ \frac{ \left(\frac{1}{2} \partial_r g_{1,1}\right)^2 + g_{1,i\partial_r}^2}{g_{1,1}}(t,1) = O(1)$$
for $0 < t \leq 1$, and hence by rescaling
$$ \frac{ \left(\frac{1}{2} \partial_r g_{1,1}\right)^2 + g_{1,i\partial_r}^2}{g_{1,1}}(1,R) = O(\rho^{-\frac{4}{p-1}-2})$$
for $1 \leq R < \infty$; since $g_{1,1}(1,R) = g_{1,1}^0(1,R) \gg 1$, $g_{1,i\partial_r}(1,R) = g_{1,i\partial_r}^0(1,R) = O(1)$, and  
$\partial_r g_{1,1}(1,R) = \partial_r g_{1,1}^0(1,R) = O(1)$ for $0 \leq R \leq 1$, this bound also holds for $0 < R \leq 1$.  We conclude that for $\delta$ small enough, the conclusion of Theorem \ref{main-9}(x) holds (with $\eps=1$) in the stress sub-critical case.  This covers all the cases required for Theorem \ref{main-9}, and thus (finally!) completes the proof of Theorem \ref{main}.

\appendix

\section{Proof of Nash-type embedding theorem}\label{gad-app}

The purpose of this appendix is to prove Proposition \ref{gadc}.

We can use the hypothesis in Proposition \ref{gadc}(iv) to make a ``gauge transformation'' to reduce to the case when the components $G_{1,i\partial_{x_j}}$ vanish:

\begin{proposition}\label{gauge}  In order to prove Proposition \ref{gadc}, it suffices to do so under the additional hypothesis that $G_{1,i \partial_{x_j}}$ vanishes identically for all $j=1,\dots,d$, and in which we now require $\alpha=0$ in \eqref{uts}.
\end{proposition}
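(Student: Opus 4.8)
The plan is to perform the gauge transformation $u\mapsto e^{i\phi}u$ for a suitable real phase $\phi$: this conjugates the Gram-type matrix $G[u,u]$ by an explicit invertible matrix $N=N(\partial\phi)$, and $\phi$ can be chosen so that the conjugated matrix has vanishing $G_{1,i\partial_{x_j}}$-components. So, starting from a general $G$ obeying (i)--(iv), I would first set $a_j\coloneqq G_{1,i\partial_{x_j}}/G_{1,1}$ (smooth, since $G_{1,1}>0$); hypothesis (iv) says precisely that the spatial $1$-form $\sum_j a_j\,dx_j$ is closed along each time-slice $\{t\}\times S_t$ of $H_d$, where $S_t=\R^d$ for $t>0$ and $S_0=\R^d\backslash\{0\}$. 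Since $d\geq 3$, each slice $S_t$ is simply connected, so a parametrised Poincar\'e lemma produces a smooth $\phi_0\colon H_d\to\R$ with $\partial_{x_j}\phi_0=-a_j$. The scaling law \eqref{scaling} forces $a_j\circ T=\tfrac12 a_j$, hence $\partial_{x_j}(\phi_0\circ T-\phi_0)\equiv 0$, so $\phi_0\circ T-\phi_0=c(t)$ depends on $t$ alone. I would then correct $\phi_0$ by $f(t)\coloneqq\sum_{n\geq 1}\bigl(c(0)-c(4^{-n}t)\bigr)$, a series that converges together with all its derivatives (because $c$ is smooth and $c(0)-c(0)=0$), is smooth on $[0,+\infty)$, and solves $f(4t)-f(t)=c(0)-c(t)$; then $\phi\coloneqq\phi_0+f$ satisfies $\partial_{x_j}\phi=-a_j$ and $\phi\circ T=\phi+\alpha$ with $\alpha\coloneqq c(0)$. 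In particular $\partial_{x_j}\phi$ scales like $\rho^{-1}$ and $\partial_t\phi$ like $\rho^{-2}$.

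Next I would introduce the matrix $N=N(\partial\phi)$ defined by $e^{-i\phi}D(e^{i\phi}v)=\sum_{D'\in\mathcal{D}}N_{D,D'}\,D'v$: it fixes the rows indexed by $1$ and $i$, has the sole nonzero off-diagonal entries $N_{\partial_{x_j},i}=\partial_{x_j}\phi$, $N_{i\partial_{x_j},1}=-\partial_{x_j}\phi$, $N_{\partial_t,i}=\partial_t\phi$, $N_{i\partial_t,1}=-\partial_t\phi$, and satisfies $(N-I)^2=0$, hence is invertible. Phase-invariance of $\langle\cdot,\cdot\rangle_{\C^m}$ gives $G[e^{i\phi}v,e^{i\phi}v]=N\,G[v,v]\,N^{\top}$ for any smooth $v$, which suggests setting $\tilde G\coloneqq N^{-1}G(N^{-1})^{\top}$. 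I would then verify that $\tilde G$ again obeys the hypotheses of Proposition \ref{gadc}: (i) holds since $N^{-1}$ is invertible; (ii) holds because each entry of $N^{-1}$ scales like $\rho^{0}$, $\rho^{-1}$ or $\rho^{-2}$ exactly according to the drop in parabolic order it produces, so the conjugation respects \eqref{scaling}; (iii) holds because the Leibniz-type identities \eqref{div}, \eqref{div-3}, \eqref{div-2}, \eqref{div-4} are gauge-covariant and hence preserved by $G\mapsto N^{-1}G(N^{-1})^{\top}$ (this is the one genuine computation needed); and a one-line computation using $\partial_{x_j}\phi=-a_j$ gives $\tilde G_{1,i\partial_{x_j}}=G_{1,i\partial_{x_j}}+G_{1,1}\partial_{x_j}\phi\equiv 0$, so that $\tilde G$ lies in the special class, with (iv) now trivial.

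Finally I would apply the assumed special case of Proposition \ref{gadc} to $\tilde G$, obtaining a smooth nowhere-vanishing $\tilde u$ with $\langle D_1\tilde u,D_2\tilde u\rangle_{\C^m}=\tilde G_{D_1,D_2}$ away from the two pairs $(\partial_t,\partial_t),(i\partial_t,i\partial_t)$, obeying \eqref{uts} with $\alpha=0$, and with $\pi\circ\tilde u$ descending to a smooth embedding; then set $u\coloneqq e^{i\phi}\tilde u$. From $G[u,u]=N\,G[\tilde u,\tilde u]\,N^{\top}$, together with the observation that the columns of $N$ indexed by $\partial_t$ and $i\partial_t$ have their only nonzero entries in the rows $\partial_t$ and $i\partial_t$, the two uncontrolled components of $G[\tilde u,\tilde u]$ feed only into the $(\partial_t,\partial_t)$ and $(i\partial_t,i\partial_t)$ components of $G[u,u]$; hence for every other pair $(D_1,D_2)$ one gets $\langle D_1 u,D_2 u\rangle_{\C^m}=(N\tilde G N^{\top})_{D_1,D_2}=G_{D_1,D_2}$, which is \eqref{gc}. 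The relation $\phi\circ T=\phi+\alpha$ yields $u(T(t,x))=e^{i\alpha}2^{-\frac{2}{p-1}}u(t,x)$, so \eqref{uts} holds with phase $\alpha$; and $\pi\circ u=\pi\circ\tilde u$ shows the descended map coincides with that of $\tilde u$ and is a smooth embedding. I expect the main obstacle to be the construction of $\phi$ with the correct scaling behaviour — the parametrised Poincar\'e lemma up to the boundary slice $t=0$ and the functional-equation correction forcing $\phi\circ T-\phi$ to be constant — together with the minor bookkeeping confirming that the uncontrolled $\partial_t$-components survive the conjugation without contaminating the controlled ones; the rest is routine.
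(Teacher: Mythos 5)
Your proof is correct and follows the same overall strategy as the paper: exploit (iv) to produce a phase $\phi$ with $\nabla_x\phi = -(G_{1,i\partial_{x_j}}/G_{1,1})_j$ and $\phi\circ T = \phi+\alpha$, gauge-transform $G$ to kill the $G_{1,i\partial_{x_j}}$ components, apply the special case, and undo the gauge. The one place where you diverge meaningfully from the paper is in the construction of the phase. The paper first integrates $\vec g(0,\cdot)$ on the slice $t=0$ to get $P_0$ with the scaling relation $P_0(2x)=P_0(x)+\alpha$, and then corrects by $-\int_x^\infty(\vec g(t,\cdot)-\vec g(0,\cdot))\cdot ds$, relying on the $O(|x|^{-2})$ decay of that difference at spatial infinity to make the improper integral (and its scaling behaviour) work. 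You instead take an arbitrary primitive $\phi_0$ via a parametrised Poincar\'e lemma on the simply connected slices, observe that the scaling defect $\phi_0\circ T - \phi_0 = c(t)$ depends only on $t$, and repair it with the geometrically convergent series $f(t)=\sum_{n\ge1}\bigl(c(0)-c(4^{-n}t)\bigr)$. This is a cleaner device: it avoids any appeal to asymptotics at spatial infinity and reduces the scaling correction to a one-variable telescoping identity, with smoothness up to $t=0$ following from the $4^{-nk}$ gain in each differentiated term. Your packaging of the gauge change as a conjugation $\tilde G = N^{-1}G(N^{-1})^\top$ by an explicit unipotent matrix $N$ with $(N-I)^2=0$ is also a tidier way to organise the bookkeeping than the paper's term-by-term definitions; it makes the preservation of positive definiteness and the non-contamination of the controlled entries by $G_{\partial_t,\partial_t}$, $G_{i\partial_t,i\partial_t}$ immediate from the zero pattern of $N$. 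The one step you (like the paper) leave as a computation — that the Leibniz-type constraints (iii) are preserved under the conjugation — is indeed routine but does need to be checked, since those identities involve derivatives of $G$ and not just its pointwise values.
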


We remark from \eqref{div-3} that the vanishing of $G_{1,i\partial_{x_j}}$ also implies the vanishing of $G_{\partial_{x_j}, i \partial_{x_k}}$.

\begin{proof}  Let the hypotheses be as in Proposition \ref{gadc}, and let $\vec g: H_d \to \R^d$ denote the vector field
$$ \vec g \coloneqq \left(\frac{G_{1,i \partial_{x_j}}}{G_{1,1}}\right)_{j=1}^d.$$ 
From hypothesis (iv) we know that $\vec g$ is curl-free, so in particular
$$ \int_\gamma \vec g(t,x) \cdot ds = 0$$
for all $t > 0$ and all closed curves $\gamma$ in $\R^d$, where $ds$ is the length element.  Taking limits as $t \to 0$, we conclude that
$$ \int_\gamma \vec g(0,x) \cdot ds = 0$$
for all $t > 0$ and all closed curves $\gamma$ in $\R^d \backslash \{0\}$. In particular, $\vec g(0,\cdot)$ is exact, and so we can find a smooth function $P_0: \R^d \backslash \{0\} \to \R$ such that
\begin{equation}\label{vpg}
 \vec g(0,x) = \nabla P_0(x)
\end{equation}
for all $x \in \R^d \backslash \{0\}$.  Observe from \eqref{scaling} that the vector field $\vec g$ has the homogeneity
\begin{equation}\label{hom}
\vec g(4t, 2x) = \frac{1}{2} \vec g(t,x)
\end{equation}
for all $(t,x) \in H_d$.  In particular, \eqref{vpg} continues to hold when $P_0$ is replaced by the rescaling $x \mapsto P_0(2x)$.  Integrating, we conclude that
\begin{equation}\label{hurt}
 P_0(2x) = P_0(x) + \alpha
\end{equation}
for all $x \in \R^d \backslash \{0\}$ and some $\alpha \in \R$.

From \eqref{hom} and the smoothness of $\vec g$ up to the boundary of $H_d$, we see for fixed $t \geq 0$ that one has the asymptotic
$$ \vec g(t,x) - \vec g(0,x) = O(1/|x|^2)$$
as $x \to \infty$, and similarly for all spacetime derivatives of $\vec g$ (in fact one gains additional powers of $|x|$ with each derivative).  If we then define the function $P: H_d \to \R$ by
$$ P(t,x) \coloneqq P_0(x) - \int_\gamma (\vec g(t,x) - \vec g(0,x)) \cdot ds $$
where $\gamma$ is an arbitrary curve from $x$ to $\infty$ in $\R^d \backslash \{0\}$ that is eventually linear, then we see from Stokes' theorem that $P$ is well-defined, and it is clear from construction that $P$ is smooth and obeys the identity
$$ \vec g(t,x) = \nabla P(t,x)$$
for all $(t,x) \in H_d$.  Furthermore, from \eqref{scaling} and \eqref{hurt} we see that
\begin{equation}\label{homogp}
 P(4t, 2x) = P(t,x) + \alpha
\end{equation}
for all $(t,x) \in \R^d$.

We now introduce the ``gauge transformed'' matrix $G' = (G'_{D_1,D_2})_{D_1,D_2 \in {\mathcal D}}$ by setting
\begin{align*}
G'_{1,1} = G'_{i,i} &\coloneqq G_{1,1} \\
G'_{1,i} = G'_{i,1} &\coloneqq 0 \\
G'_{1,D_1} = G'_{D_1,1} = G'_{i,iD_1} = G'_{iD_1,i} &\coloneqq G_{1,D_1} \\
G'_{1,iD_1} = G'_{iD_1,1} = - G_{i,D_1} = -G_{D_1,i} &\coloneqq G_{1,D_1} - G_{1,1} D_1 P \\
G'_{D_1,D_2} = G'_{iD_1,iD_2} &\coloneqq G_{D_1,D_2} - G_{1,iD_2} D_1 P - G_{1,iD_1} D_2 P + (D_1 P) (D_2 P) G_{1,1} \\
G'_{D_1,iD_2} = G_{iD_2,D_1} &\coloneqq G_{D_1,iD_2} - (D_2 P) G_{1,D_1} + (D_1 P) G_{1,D_2}
\end{align*}
for $D_1,D_2 \in {\mathcal D}_\R \backslash \{1\}$.  The motivation for this matrix is that the requirement \eqref{gc} can be seen to be equivalent to the requirement
\begin{equation}\label{gcp}
 G'_{D_1,D_2}(t,x) = \langle D_1 (u e^{iP})(t,x), D_2 (u e^{iP})(t,x) \rangle_{\C^m}
\end{equation}
for $D_1,D_2 \in {\mathcal D}$, as can be seen from many applications of the product and Leibniz rules.

It is easy to see that $G'$ is smooth and real symmetric and obeys the scaling relation \eqref{scaling}.  We observe the identity
$$ \sum_{D_1,D_2 \in {\mathcal D}} G'_{D_1,D_2} a_{D_1} a_{D_2} = \sum_{D_1,D_2 \in {\mathcal D}} G_{D_1,D_2} b_{D_1} b_{D_2}$$
for all real numbers $a_D, D \in {\mathcal D}$, where
\begin{align*}
b_1 &\coloneqq a_1 - \sum_{D \in {\mathcal D}_\R} a_{iD} DP \\
b_i &\coloneqq a_1 + \sum_{D \in {\mathcal D}_\R} a_D DP \\
b_{D_1} &\coloneqq a_{D_1} \\
b_{iD_1} &\coloneqq a_{iD_1}.
\end{align*}
From this we see that $G'$ is strictly positive definite, and thus obeys the property (i).  Routine calculation shows that it also obeys the conditions (ii), (iii), (iv), and that the components $G'_{1,i\partial_{x_j}}$ vanish for $j=1,\dots,d$.  By hypothesis, we may thus find 
a smooth function $u': H_d \to \C^m$ that is nowhere vanishing and obeying the discrete self-similarity \eqref{uts} with $\alpha$ replaced by $0$, such that
$$
 G'_{D_1,D_2}(t,x) = \langle D_1 u'(t,x), D_2 u'(t,x) \rangle_{\C^m}
$$
for all $(t,x) \in H_d$ and all $D_1,D_2\in {\mathcal D}$ other than $(D_1,D_2) = (\partial_t, \partial_t), (i \partial_t, i \partial_t)$.
Furthermore, the function $\theta: H_d/T^\Z \to \mathbf{CP}^{m-1}$, formed by descending the map $\pi \circ u': H_d \to \mathbf{CP}^{m-1}$ to $H_d/T^\Z$, is a smooth embedding.  If we then set $u \coloneqq u' e^{iP}$, one checks from the equivalence of \eqref{gc} and \eqref{gcp} that that $u$ obeys all the properties required for Proposition \ref{gadc}.
\end{proof}

It remains to prove Proposition \ref{gadc} under the additional hypothesis that $G_{1,i \partial_{x_j}}=0$ and with the requirement $\alpha=0$.  It will be convenient to work with a reduced ``basis'' of components of $G$, in order to eliminate the various constraints between the components of $G$.  Let ${\mathcal P} \subset {\mathcal D}^2$ denote the following set of pairs in ${\mathcal D}$:
\begin{align*}
 {\mathcal P} &\coloneqq \{ (1,D): D = 1, i \partial_{x_1}, \dots, i \partial_{x_d}, i \partial_t\} \\
&\quad \cup \{ (\partial_{x_j}, \partial_{x_k}): 1 \leq j \leq k \leq d \} \\
&\quad \cup \{ (\partial_{x_j}, \partial_t): 1 \leq j \leq d \}
\end{align*}
and then define the reduction $G_{\mathcal P}: H_d \to \R^{\mathcal P}$ of the matrix $G$ as
\begin{equation}\label{gr}
 G_{\mathcal P} \coloneqq (G_{D_1,D_2})_{(D_1,D_2) \in {\mathcal P}}
\end{equation}
and the Gram-type matrix $G_{\mathcal P}[u,v]: H_d \to \R^{\mathcal P}$ of two smooth functions $u,v: H_d \to \C^m$ for some $m \geq 1$ by the formula
$$ G_{\mathcal P}[u,v] \coloneqq (\langle D_1 u, D_2 v \rangle_{\C^m})_{(D_1,D_2) \in {\mathcal P}}.$$
Observe from the hypotheses \eqref{div}, \eqref{div-2}, \eqref{div-3}, \eqref{div-4} (as well as the symmetry $G_{D_1,D_2} = G_{D_2,G_1}$) on the matrix $G$, as well as the analogous identities \eqref{da}, \eqref{da-2}, \eqref{bongo}, \eqref{bango} (as well as the symmetry $\langle D_1 u, D_2 u \rangle_{\C^m} = \langle D_2 u, D_1 \rangle_{\C^m}$) on the Gram-type matrix $G[u,u]$, that if $u$ obeyed the equations
\begin{equation}\label{gpu}
 G_{\mathcal P}[u,u] = G_{\mathcal P}
\end{equation}
(that is to say, \eqref{gc} holds for all $(D_1,D_2) \in {\mathcal P}$) then in fact one has \eqref{gc} for all pairs $(D_1,D_2)$ in ${\mathcal D}^2$ other than $(\partial_t,\partial_t)$ and $(i \partial_t, i \partial_t)$.  Thus, our task reduces to that of locating a smooth, nowhere vanishing map $u: H_d \to \C^m$ which obeys the discrete self-similarity \eqref{uts} and the equation \eqref{gpu}.

In order to avoid technicalities involving elliptic theory for manifolds with boundary, it will be convenient to replace the half-space $H_d$ with the punctured spacetime $\R \times \R^d \backslash \{(0,0)\}$, so that the quotient
$$ M \coloneqq (\R \times \R^d \backslash \{(0,0)\})/T^\Z$$
is now a smooth compact manifold without boundary.  More precisely, we will show

\begin{proposition}\label{guip}  Let $G_{\mathcal P} = (G_{D_1,D_2})_{(D_1,D_2) \in {\mathcal P}}$ be a tuple of smooth functions $G_{D_1,D_2}: \R \times \R^d \backslash \{(0,0)\} \to \R$ obeying the scaling law \eqref{scaling}.  Suppose also that the fields $G_{1,i\partial_{x_j}}$ vanish for $j=1,\dots,d$, and that the $d+1 \times d+1$ matrix
\begin{equation}\label{matrix}
 (G_{D_1,D_2})_{D_1,D_2 \in \{1,\partial_{x_1},\dots,\partial_{x_d}\}}
\end{equation}
is strictly positive definite on all of $\R \times \R^d \backslash \{0\}$, where we define
$$ G_{1,\partial_{x_j}} = G_{\partial_{x_j},1} \coloneqq \frac{1}{2} \partial_{x_j} G_{1,1}$$
for $j=1,\dots,d$ and
$$ G_{\partial_{x_k}, \partial_{x_j}} \coloneqq G_{\partial_{x_j}, \partial_{x_k}}$$
for $1 \leq j < k \leq d$.  Then, if $m$ is an integer that is sufficiently large depending on $d$, there exists a smooth nowhere vanishing function $u: \R \times \R^d \backslash \{(0,0)\} \to \C^m$ obeying \eqref{uts} with $\alpha=0$ such that the map $\pi \circ u$ is a smooth embedding of $M$ into $\mathbf{CP}^{m-1}$, and such that
$$ G_{{\mathcal P}}[u,u] = G_{\mathcal P}$$
on all of $\R \times \R^d \backslash \{(0,0)\}$.
\end{proposition}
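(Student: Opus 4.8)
The plan is to adapt the proof of the Nash embedding theorem \cite{nash}, in the streamlined form of Günther \cite{gunther} which replaces the Nash--Moser scheme by a fixed point argument in a suitable scale of function spaces, to this ``partially complexified'' setting. First I would set up the compact, equivariant picture. Since the data $G_{D_1,D_2}$ and the self-similarity relation \eqref{uts} (with $\alpha=0$) are equivariant under $T^\Z$ up to the explicit parabolic weights, it is natural to work throughout with $T^\Z$-equivariant maps on $\R\times\R^d\backslash\{(0,0)\}$ and with scale-invariant norms built from $\rho$; equivalently, $u$ is a section of the flat $\C^m$-bundle over the compact manifold $M$ whose monodromy around the ``$\rho$-circle'' is multiplication by $2^{-2/(p-1)}$. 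It is also convenient to absorb the phase-rotation symmetry by regarding the pairs in $\mathcal P$ as prescribing certain components of a symmetric $2$-tensor associated to the map $(t,x,\phi)\mapsto e^{i\phi}u(t,x)$ on the $(d+2)$-dimensional manifold $M\times S^1$: namely $g(\partial_\phi,\partial_\phi)=G_{1,1}$, $g(\partial_\phi,\partial_{x_j})=0$ (this is where $G_{1,i\partial_{x_j}}=0$ enters), $g(\partial_\phi,\partial_t)=\pm G_{1,i\partial_t}$, $g(\partial_{x_j},\partial_{x_k})=G_{\partial_{x_j},\partial_{x_k}}$, $g(\partial_{x_j},\partial_t)=G_{\partial_{x_j},\partial_t}$, while $g(\partial_t,\partial_t)$ is left \emph{free}. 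The sought map must be $S^1$-equivariant (automatic for the ansatz $e^{i\phi}u$) and $T^\Z$-equivariant, and the hypothesis that the $(d+1)\times(d+1)$ block \eqref{matrix} is positive definite says precisely that, after normalising out the radial factor $\sqrt{G_{1,1}}$, the spatial directions of the direction map $\pi\circ u$ carry a genuine Riemannian metric.

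Next I would produce a smooth initial approximation $u_0$. Using that $m$ is large, take a smooth $T^\Z$-equivariant free embedding (first and second derivatives linearly independent, so that an ample normal frame is available for later corrections), and rescale and perturb it so that the ``Gram defect'' $G_{\mathcal P}-G_{\mathcal P}[u_0,u_0]$ is short, meaning: its symmetric part, after filling the free $\partial_t\partial_t$ slot with a sufficiently large value (legitimate by a Schur-complement argument using the positive-definiteness of \eqref{matrix}), is a strictly positive-definite $2$-tensor, while the ``imaginary'' residuals $\langle u_0,i\partial_{x_j}u_0\rangle$ and $G_{1,i\partial_t}-\langle u_0,i\partial_t u_0\rangle$ are small; compactness of $M$ together with the scaling law makes ``short'' and ``small'' uniform. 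Then comes the iteration proper: decompose the positive symmetric part of the defect into a locally finite sum of primitive pieces $a_k^2\, d\varphi_k\otimes d\varphi_k$ and cancel them by high-frequency Nash corrections of the schematic form $\lambda^{-1}(\cos(\lambda\varphi_k)\,\xi_k+\sin(\lambda\varphi_k)\,\eta_k)$ along orthonormal pairs $\xi_k,\eta_k$ in the current normal bundle, whose quadratic self-interaction reproduces $a_k^2\, d\varphi_k\otimes d\varphi_k$ with $O(1/\lambda)$ error in the controlled components; to enforce the \emph{exact} constraint $\langle u,i\partial_{x_j}u\rangle=0$ and to fit $G_{1,i\partial_t}$, use in addition corrections of the form $\lambda^{-1}e^{i\lambda\varphi}\nu$, exploiting $\langle e^{i\lambda\varphi}\nu,i e^{i\lambda\varphi}\nu\rangle=0$ to remove residual phase gradients without disturbing the real symmetric block to leading order; crucially, since $g(\partial_t,\partial_t)$ is unconstrained, the large $\partial_t$-derivatives of these oscillatory terms do no harm. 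Günther's device then packages one such step as a contraction on a fixed Hölder (or Sobolev) space adapted to the parabolic scaling, so that $T^\Z$- and $S^1$-equivariance persist at every stage, yielding a convergent sequence whose limit $u$ is smooth, $T^\Z$-equivariant, nowhere vanishing, and satisfies $G_{\mathcal P}[u,u]=G_{\mathcal P}$; the remaining components of the full Gram matrix other than $(\partial_t,\partial_t),(i\partial_t,i\partial_t)$ then come out correctly by the algebraic identities \eqref{da}--\eqref{bango}, and since the total correction is small in $C^1$ the map $\pi\circ u$ is still a smooth embedding of $M$ into $\mathbf{CP}^{m-1}$.

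The main obstacle, as always in Nash-type theorems, is to organise the perturbative loss of derivatives so that the iteration (or a single Günther-type fixed point) converges, while \emph{simultaneously} (i) preserving the exact parabolic self-similarity \eqref{uts}, which forces scale-invariant estimates on the compact quotient $M$ rather than estimates on the original singular, non-compact domain; (ii) respecting complex linearity and in particular achieving $\langle u,i\partial_{x_j}u\rangle=0$ on the nose --- this is what makes the curl-free hypothesis (iv) and the gauge reduction of Proposition \ref{gauge} indispensable, since without them the phase of $u$ could not be pinned down globally; and (iii) keeping the controlled Gram block strictly positive definite and $\pi\circ u$ injective and immersed along the whole iteration. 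The structural point that makes it all go through with $m$ merely large --- rather than requiring a genuine holomorphic Nash theorem, which for compact complex manifolds is false --- is precisely the freedom in the uncontrolled $\partial_t\partial_t$ direction, which absorbs the otherwise destabilising high-frequency derivatives of the corrections.
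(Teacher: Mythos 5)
Your proposal hits the two structural ideas that drive the paper's proof — exploit the unconstrained $(\partial_t,\partial_t)$ slot (so that high-frequency-in-$t$ corrections are harmless) and close the loss-of-derivatives problem via Günther's method on the compact quotient $M$ — and your observation that the $(d+1)\times(d+1)$ block \eqref{matrix} is all one needs because the $(\partial_t,\partial_t)$ entry can be filled in by a Schur-complement argument is exactly right and is in fact a point the paper glosses over slightly. But the architecture you describe is genuinely different from the paper's and, as stated, has a gap in the middle. The paper does not run a Nash-style spiraling iteration at all: it first peels off a free short map $u_0$, then applies the already-proved real Proposition~\ref{gad} as a black box (via the additivity \eqref{add}) to solve the \emph{entire} real symmetric part of $G_{\mathcal P}$ exactly in one shot, producing $u_1$; then it writes the leftover $G_{1,i\partial_t}$ as a difference of squares and cancels it with an explicit $T^\Z$-equivariant family $u_{2,\eps}$ of oscillatory terms $\eps\, b(t,x) e^{it/(\eps 4^n)^2}$, with the crucial feature that the phase depends only on $t$, so that the $(1,i\partial_{x_j})$, $(1,1)$, and $(\partial_{x_j},\partial_{x_k})$ slots are hit only at $O(\eps^2)$ and $(1,i\partial_{x_j})$ is in fact untouched; and finally the resulting $O(\eps^2)$ residual is absorbed by a Günther contraction applied \emph{only} to a perturbation $u_{0,\eps}$ of the initial free map, so that $u=(u_{0,\eps},u_1,u_{2,\eps})$ solves the problem. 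By contrast, you propose redoing the Nash twist $\lambda^{-1}(\cos(\lambda\varphi_k)\xi_k+\sin(\lambda\varphi_k)\eta_k)$ for the symmetric part from scratch, interleaved with complex phase corrections $\lambda^{-1}e^{i\lambda\varphi}\nu$, and then invoking ``Günther's device'' to ``package one such step as a contraction.'' This is a misattribution: Günther's contribution is the specific algebraic decomposition $G_{\mathcal P'}[v,v]=L_uQ_0[v,v]+Q_1[v,v]$ (built from $(1-\Delta)^{-1}$ and a carr\'e-du-champ cancellation) that makes a single Banach fixed point close, \emph{not} a device for making spiral iterations converge; if one really wants to iterate Nash spirals, one is back to Nash–Moser. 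Moreover your phase corrections as written would disturb the $(1,i\partial_{x_j})$ slots at order $\lambda^{-1}\partial_j\varphi\|\nu\|^2$ unless $\varphi$ is chosen to depend on $t$ alone, which is exactly the paper's device and is not articulated in your sketch. Finally, the paper secures the embedding property of $\pi\circ u$ not by a smallness-in-$C^1$ argument but structurally: $u_1$ is already an embedding by Proposition~\ref{gad}, and pairing with further components preserves injectivity and immersion regardless of how badly $u_{0,\eps}$ and $u_{2,\eps}$ behave. Your proposed $C^1$-smallness route would require additional uniform control over the whole correction, which your sketch does not supply.
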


We now explain why Proposition \ref{guip} gives us Proposition \ref{gadc}.  Let $G_{D_1,D_2}$, $D_1,D_2 \in {\mathcal D}$ be as in that proposition, with $G_{1,i \partial_{x_j}}=0$.  For each $D_1,D_2 \in {\mathcal D}$, the function $\rho^{\frac{4}{p-1} + \operatorname{ord}(D_1) + \operatorname{ord}(D_2)} G_{D_1,D_2}$ is $T$-invariant and may thus be viewed as a smooth function on the quotient space $H_d/T^\Z$.  Using the extension theorem\footnote{One can also use the classical extension theorem of Whitney \cite{whitney-ext}.} of Seeley \cite{seeley}, we may smoothly extend this function to the larger space $(\R \times \R^d \backslash \{(0,0)\})/T^\Z$; lifting this extension back up to $\R \times \R^d \backslash \{(0,0)\}$ and dividing by $\rho^{\frac{4}{p-1} + \operatorname{ord}(D_1) + \operatorname{ord}(D_2)}$, we obtain a smooth extension of $G_{D_1,D_2}$ for $(D_1,D_2) \in {\mathcal P}$ from $H_d$ to $\R \times \R^d \to \{(0,0)\}$ that continues to obey the scaling properties \eqref{scaling}.  Of course we can arrange matters so that one retains the symmetry property $G_{D_1,D_2}=G_{D_2,D_1}$ with this extension, as well as the vanishing property $G_{1,i \partial_{x_j}} = 0$.  By continuity, the matrix \eqref{matrix} will remain strictly positive definite in an open neighbourhood of $H_d$.  By smoothly interpolating the $G_{D_1,D_2}$ with another set of functions for which the matrix \eqref{matrix} is strictly positive definite everywhere (while also still obeying \eqref{scaling}; this is easily achieved by keeping the diagonal terms $G_{1,1}$, $G_{\partial_{x_j}, \partial_{x_j}}$ large and positive), one can assume without loss of generality that \eqref{matrix} is in fact positive definite on \emph{all} of $\R \times \R^d \backslash \{(0,0)\}$.  If one now applies Proposition \ref{guip} and then restricts back to $H_d$, one obtains the claim.

It remains to establish Proposition \ref{guip}.
If we knew that the component $G_{1,i\partial_t}$ of $G$ vanished (in addition to the vanishing of $G_{1,i\partial_{x_j}}$ that is already assumed), one could obtain this claim immediately from Proposition \ref{gad}, by embedding $\R^m$ into $\C^m$ and noting that the inner products $\langle u, i \partial_{x_j} u \rangle_{\C^m}$ and $\langle u, i \partial_t u \rangle_{\C^m}$ automatically vanish if $u$ takes values in $\R^m$.  (In this case, we could also recover the $(\partial_t, \partial_t)$ case of \eqref{gc}.)  Thus the only obstacle to address is the non-vanishing of $G_{1,i\partial_t}$.  Our strategy, inspired by the usual proofs of the Nash embedding theorem, will be to modify $G_{\mathcal P}$ by subtracting the contribution of a suitable ``short map'' that is designed to mostly eliminate the $G_{1,i\partial_t}$ component (while creating only small perturbations in the remaining components of $G_{\mathcal P}$), and then use the perturbative argument\footnote{One could also use the Nash-Moser iteration scheme here, although this would be more complicated techncially.} of Gunther \cite{gunther} to construct a solution $u$ for this perturbative version of $G_{\mathcal P}$.

We turn to the details.  The map $u \mapsto G_{\mathcal P}[u,u]$ defined by \eqref{gr} is quadratic in $u$, rather than linear.  Nevertheless, it does have the following very convenient additivity property: given two maps $u_1: \R \times \R^d \backslash \{(0,0)\} \to \C^{m_1}$ and $u_2: \R \times \R^d \backslash \{(0,0)\} \to \C^{m_2}$ into two finite-dimensional complex vector spaces, one has the identity
\begin{equation}\label{add}
G_{\mathcal P}[(u_1,u_2),(u_1,u_2)] = G_{\mathcal P}[u_1,u_1] + G_{\mathcal P}[u_2,u_2]
\end{equation}
where the \emph{pairing} $(u_1,u_2): \R \times \R^d \backslash \{(0,0)\} \to \C^{m_1+m_2}$ of $u_1, u_2$ is the map defined by the formula
$$ (u_1,u_2)(t,x) \coloneqq (u_1(t,x), u_2(t,x))$$
where we identify $\C^{m_1} \times \C^{m_2}$ with $\C^{m_1+m_2}$ in the obvious fashion.  Note also that if $u_1,u_2$ are smooth and obey \eqref{uts} with $\alpha=0$, then the pairing $(u_1,u_2)$ does also; and if one of $u_1,u_2$ is an embedding and nowhere vanishing and the other is merely a smooth map that is allowed to vanish, then the pairing $(u_1,u_2)$ will be an embedding that is nowhere vanishing.  

Next, we (again inspired by the usual proofs of the Nash embedding theorem) define a smooth map $u: \R \times \R^d \backslash \{(0,0)\} \to \C^m$ to be \emph{free} if, for any $(t,x) \in \R \times \R^d \backslash \{(0,0)\}$, the vectors $u(t,x)$, $\partial_{x_j} u(t,x)$ (for $1 \leq j \leq d$), $\partial_t u(t,x)$, $\partial_{x_j} \partial_{x_k} u(t,x)$ (for $1 \leq j \leq k \leq d$), and $\partial_{x_j} \partial_t u(t,x)$ (for $1 \leq j \leq d$) are all linearly independent over the complex numbers $\C$ in $\C^m$.  We observe that if $m$ is sufficiently large (depending only on $d$), then there is at least one free map into $\C^m$ that obeys the discrete self-similarity \eqref{uts}.  Indeed, from the Whitney embedding theorem there is a smooth embedding $v\colon M \to \R^{m_0}$ whenever $m_0$ is sufficiently large depending on $d$.  If we then define the map $w\colon M \to \R^{1 + m_0 + \binom{m_0}{2}}$ by the formula
$$ w \coloneqq ( 1, (v_j)_{1 \leq j \leq m_0}, (v_j v_k)_{1 \leq j \leq k \leq m_0} )$$
where $v_1,\dots,v_{m_0}\colon \R \times \R^d \backslash \{(0,0)\}/T^\Z \to \R$ are the components of $v$, then one verifies from the chain rule and the immersed nature of $v$ that $w$ is free over $\R$, and hence free over $\C$ if one embeds $\R^{1 + m_0 + \binom{m_0}{2}}$ into $\C^{1 + m_0 + \binom{m_0}{2}}$.  If one then defines the map $u_0: H_d \to \C^{1 + m_0 + \binom{m_0}{2}}$ by the formula
$$ u_0(t,x) \coloneqq \rho^{-\frac{2}{p-1}} w( \pi(t,x) )$$
we see from a further application of the chain rule that $u_0$ is smooth, free, nowhere vanishing, and obeys the discrete self-similarity relation \eqref{uts}. By multiplying $u_0$ by a sufficiently small positive constant (which does not affect the properties of $u$ stated above), and using the compactness of $M$ and the positive definiteness of the $(d+2)\times (d+2)$ matrix-valued function $(G_{D_1,D_2})_{D_1,D_2 \in {\mathcal D}_\R}$, we can also assume that $u_0$ is a \emph{short map} in the sense that the $(d+2) \times (d+2)$ matrix-valued function
$$
\left( G_{D_1,D_2} - \langle D_1 u_0, D_2 u_0 \rangle_{\C^{1 + m_0 + \binom{m_0}{2}}} \right)_{D_1,D_2 \in {\mathcal D}_\R}$$
is strictly positive definite on all of $\R \times \R^d \backslash \{(0,0)\}$.  Applying Proposition \ref{gad}, we see (for $m_1$ sufficiently large depending on $d$) we may find a smooth nowhere vanishing map $u_1: \R \times \R^d \backslash \{(0,0)\} \to \R^{m_1}$, obeying the discrete self-similarity property \eqref{uts} with $\alpha=0$, with $u_1 / \|u_1\|_{\R^m}$ a smooth embedding of $M$ into $S^{m_1-1}$, such that
\begin{equation}\label{gd-ident}
G_{D_1,D_2} - \langle D_1 u_0, D_2 u_0 \rangle_{\C^{1 + m_0 + \binom{m_0}{2}}} = \langle D_1 u_1, D_2 u_1 \rangle_{\C^{m_1}}
\end{equation}
on $\R \times \R^d \backslash \{(0,0)\}$ for all $D_1,D_2 \in {\mathcal D}_\R$.  This identity also is obeyed when $(D_1,D_2) = (1,i \partial_{x_j})$ for some $j=1,\dots,d$, since all three terms in the identity vanish in this case.  On the other hand, \eqref{gd-ident} can fail when $(D_1,D_2) = (1, i\partial_t)$, since $G_{1,i\partial_t}$ is not assumed to vanish.  In particular, the vector-valued function
$$ G_{\mathcal P} - G_{\mathcal P}[u_0,u_0] - G_{\mathcal P}[u_1,u_1]$$
has all components vanishing except for the $(1,i\partial_t)$ component, which is equal to $G_{1,i\partial_t}$.  To address this remaining component, we proceed by the following argument.  Using a smooth partition of unity, we can find a finite number $a_1,\dots,a_k: \R \times \R^d \backslash \{(0,0)\} \to \R$ of smooth functions, each of which is supported in a ball of radius $1/1000$ in the region $\{ (t,x) \in H_d: \frac{1}{2} \leq \rho \leq 2 \}$, such that
\begin{equation}\label{nax}
1 = \sum_{n \in \Z} \sum_{l=1}^k a_l^2(T^{-n}(t,x))
\end{equation}
for all $(t,x) \in H_d$, where $k$ depends only on $d$.  Meanwhile, the function $\rho^{\frac{4}{p-1} + 2} G_{1,i\partial_t}(t,x)$ is $T$-invariant and thus descends to a smooth function of $H_d/T^\Z$.  This function can be written as the difference of two squares $f_+^2-f_-^2$ for some smooth $f_\pm: H_d/T^\Z \to \R$ (e.g. by setting $f_-$ to be a large positive constant and then solving for $f_+$), thus
$$ G_{1,i\partial_t}(t,x) = \rho^{-\frac{4}{p-1} - 2} f_+(\pi(t,x))^2 - \rho^{-\frac{4}{p-1} - 2} f_-(\pi(t,x))^2.$$
Multiplying this with \eqref{nax}, we obtain the decomposition
$$ G_{1,i\partial_t}(t,x) = \sum_{n \in \Z} \sum_{l=1}^k 2^{-(\frac{4}{p-1}+2) n} b_{l,+}^2(T^{-n}(t,x))- 2^{-(\frac{4}{p-1}+2) n} b_{l,-}^2(T^{-n}(t,x)) $$
where
$$ b_{l,\pm}(t,x) \coloneqq a_l(t,x) \rho^{-\frac{2}{p-1}-1} f_\pm(\pi(t,x)).$$
Note that for fixed $l$, the functions $b_{l,+}^2(T^{-n}(t,x))$ have disjoint supports as $n$ varies, and similarly for $b_{l,-}^2(T^{-n}(t,x))$.

Next, let $\eps>0$ be a small parameter to be chosen later, and let $u_{2,\eps}: H_d \to \C^{2k}$ be the map
$$ u_{2,\eps}(t,x) \coloneqq \left( \left( \sum_{n \in \Z} \eps 2^{-\frac{2n}{p-1}} b_{l,+}(T^{-n}(t,x)) e^{i \frac{t}{\eps 4^n}^2} \right)_{l=1}^k, -\left( \sum_{n \in \Z} \eps 2^{-\frac{2n}{p-1}} b_{l,-}(T^{-n}(t,x)) e^{i \frac{t}{\eps 4^n}^2} \right)_{l=1}^k \right).$$
One can check that $u_{2,\eps}$ is smooth and obeys the discrete self-similarity property \eqref{uts}.  Direct computation using \eqref{add} and the chain and product rules gives the identity
$$ G_{\mathcal P} - G_{\mathcal P}[u_0,u_0] - G_{\mathcal P}[u_1,u_1] - G_{\mathcal P}[u_{2,\eps},u_{2,\eps}] = \eps^2 H_{\mathcal P}$$
where $H_{\mathcal P} = (H_{D_1,D_2})_{(D_1,D_2) \in {\mathcal P}}$ is a smooth function from $H_d$ to $\C^{\mathcal P}$ that is independent of $\eps$ and obeys the scaling property \eqref{scaling}.  The precise value of $H_{\mathcal P}$ is not important for our purposes, but for sake of explicitness we can evaluate the components of this matrix to be given by the formulae
\begin{align*}
H_{1,1}(t,x) &= - \sum_\pm \sum_{n \in \Z} \sum_{l=1}^k 2^{-\frac{4}{p-1} n} b_{l,\pm}^2(T^{-n}(t,x)) \\
H_{1,i\partial_{x_j}}(t,x) &= 0 \\
H_{1,i \partial_t}(t,x) &= 0 \\
H_{\partial_{x_j}, \partial_{x_{j'}}}(t,x) &= -\sum_\pm \sum_{n \in \Z} \sum_{l=1}^k 2^{-(\frac{4}{p-1}+2) n} (\partial_{x_j} b_{l,\pm} \partial_{x_{j'}} b_{l,\pm}) (T^{-n}(t,x))\\
H_{\partial_{x_j}, \partial_t}(t,x) &= H_{\partial_t, \partial_{x_j}}(t,x) \\
&= -\sum_\pm \sum_{n \in \Z} \sum_{l=1}^k 2^{-(\frac{4}{p-1}+3) n} (\partial_{x_j} b_{l,\pm} \partial_t b_{l,\pm}) (T^{-n}(t,x))
\end{align*}
for $j,j' = 1,\dots,d$.
It is important here that the pairs $(\partial_t, \partial_t)$, $(i \partial_t, i \partial_t)$ do not appear in ${\mathcal P}$, as these would introduce terms in $H_{\mathcal P}$ that are of order $1/\eps^4$, which is unacceptably large for our purposes.

Proposition \ref{guip} (and hence Proposition \ref{gadc}) may now be deduced from the following perturbative claim:

\begin{proposition}\label{gippy}  Let the notation and hypotheses be as above.  If $\eps>0$ is sufficiently small, then there exists a smooth map $u_{0,\eps}: \R \times \R^d \backslash \{(0,0)\} \to \C^{1 + m_0 + \binom{m_0}{2}}$ obeying the discrete self-similarity property \eqref{uts} with $\alpha=0$, such that
$$ G_{\mathcal P}[u_{0,\eps},u_{0,\eps}] = G_{\mathcal P}[u_0,u_0] + \eps^2 H_{\mathcal P}.$$
\end{proposition}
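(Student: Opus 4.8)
The plan is to realize $u_{0,\eps}$ as a small perturbation $u_{0,\eps} = u_0 + v$ of the free map $u_0$, following the implicit-function-theorem-free scheme of Günther \cite{gunther} for the Nash embedding theorem, adapted to the ``degenerate'' Gram-type structure indexed by ${\mathcal P}$ (in which the pair $(\partial_t,\partial_t)$ is absent). First I would dispose of the imaginary components: since $u_0$ takes values in the real subspace $\R^{1+m_0+\binom{m_0}{2}} \subset \C^{1+m_0+\binom{m_0}{2}}$ and $H_{1,i\partial_{x_j}} = H_{1,i\partial_t} = 0$, it suffices to produce a \emph{real}-valued $u_{0,\eps}$ matching the three ``real'' families of components of $G_{\mathcal P}[u_0,u_0] + \eps^2 H_{\mathcal P}$, namely those indexed by $(1,1)$, by $(\partial_{x_j},\partial_{x_k})$ with $j \leq k$, and by $(\partial_{x_j},\partial_t)$; for any real-valued map the components indexed by $(1,i\partial_{x_j})$ and $(1,i\partial_t)$ vanish identically (they are imaginary parts of sums of products of reals), which matches the right-hand side since $\langle u_0, i\partial_{x_j} u_0\rangle = \langle u_0, i\partial_t u_0\rangle = 0$ and $H_{1,i\partial_{x_j}} = H_{1,i\partial_t} = 0$. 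We are thus reduced to deforming $u_0$ so as to prescribe a small perturbation of the quantities $\langle D_1 u_0, D_2 u_0\rangle$ for $D_1,D_2 \in \{1,\partial_{x_1},\dots,\partial_{x_d},\partial_t\}$ with the slot $(\partial_t,\partial_t)$ omitted; correspondingly the freeness hypothesis on $u_0$ is exactly the one in the definition above (independence of $u_0$, $\partial_{x_j}u_0$, $\partial_t u_0$, $\partial_{x_j}\partial_{x_k}u_0$, $\partial_{x_j}\partial_t u_0$, with no $\partial_t^2 u_0$ required). Throughout, all maps, operators, and function spaces are taken $T$-equivariant: since $M := (\R\times\R^d\setminus\{(0,0)\})/T^\Z$ is a compact manifold without boundary, equivariant fields become, after the fixed rescaling by the appropriate power of $\rho$, ordinary Hölder or Sobolev sections over $M$, and the elliptic operator introduced below can be chosen $T$-equivariant so that its inverse preserves equivariance — this is precisely why Proposition \ref{guip} was stated on the punctured spacetime rather than on $H_d$.

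Writing $u_{0,\eps} = u_0 + v$ with $v$ real-valued, small, and scaling like $\rho^{-2/(p-1)}$ (so that \eqref{uts} with $\alpha=0$ is preserved), the equation becomes, for each relevant pair $(D_1,D_2)$,
\[
 \langle D_1 u_0, D_2 v\rangle + \langle D_1 v, D_2 u_0\rangle + \langle D_1 v, D_2 v\rangle = \eps^2 H_{D_1,D_2},
\]
read as $2\langle u_0,v\rangle + \langle v,v\rangle = \eps^2 H_{1,1}$ when $(D_1,D_2)=(1,1)$. Following Günther, I would rewrite the linear-in-$v$ part through the Leibniz identities
\[
 \langle D_1 u_0, D_2 v\rangle + \langle D_2 u_0, D_1 v\rangle = D_1\langle D_2 u_0, v\rangle + D_2\langle D_1 u_0, v\rangle - 2\langle D_1 D_2 u_0, v\rangle ,
\]
seek $v$ within the span of the free frame $\{u_0, \partial_{x_j}u_0, \partial_t u_0, \partial_{x_j}\partial_{x_k}u_0, \partial_{x_j}\partial_t u_0\}$, and impose the normalizations that $\langle \partial_{x_j}u_0, v\rangle$ and $\langle\partial_t u_0, v\rangle$ vanish. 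By freeness the frame coefficients of $v$ are then recovered, by an invertible and smoothly $u_0$-dependent linear-algebra step, from the second-order data $\Phi := (\langle \partial_{x_j}\partial_{x_k}u_0,v\rangle,\ \langle\partial_{x_j}\partial_t u_0,v\rangle)$ together with $\langle u_0, v\rangle$, the latter pinned down by the $(1,1)$ equation. Substituting back, the whole system collapses to a single closed equation for $\Phi$ which, exactly as in \cite{gunther}, can be cast in the form
\[
 \Phi = {\mathcal L}^{-1}\big({\mathcal Q}(\Phi)\big) + \eps^2\,{\mathcal L}^{-1}({\mathcal H}),
\]
where ${\mathcal L}$ is a fixed second-order elliptic (Laplace-type, shifted by a large positive constant so as to be invertible on the closed manifold $M$) operator built from $u_0$, ${\mathcal H}$ depends smoothly on $H_{\mathcal P}$ and $u_0$, and ${\mathcal Q}$ is quadratic with at most one derivative on each factor; hence $\Phi\mapsto {\mathcal L}^{-1}({\mathcal Q}(\Phi))$ maps $H^s$ into $H^{s+1}$, in particular into $H^s$, with no net loss of regularity.

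I would then conclude by a contraction-mapping argument in the space of $T$-equivariant $H^s$-sections for one fixed large $s$. On the ball $B_\delta$ of radius $\delta$ one has $\|{\mathcal L}^{-1}{\mathcal Q}(\Phi)\|_{H^s} \leq C\delta^2$ with Lipschitz constant at most $2C\delta$, while $\|\eps^2 {\mathcal L}^{-1}{\mathcal H}\|_{H^s} \leq C'\eps^2$; choosing first $\delta$ small enough that $2C\delta < \tfrac12$ and $C\delta^2 < \tfrac{\delta}{2}$, and then $\eps_0$ small enough that $C'\eps^2 < \tfrac{\delta}{2}$ for $\eps<\eps_0$, makes the right-hand side a contraction of $B_\delta$ into itself, yielding a unique small fixed point $\Phi_\eps$. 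Since ${\mathcal L}^{-1}$ gains two derivatives, ${\mathcal H}$ is smooth, and ${\mathcal Q}$ costs at most one derivative, the identity $\Phi_\eps = {\mathcal L}^{-1}({\mathcal Q}(\Phi_\eps)) + \eps^2{\mathcal L}^{-1}({\mathcal H})$ bootstraps $\Phi_\eps$ to $C^\infty$; recovering $v$ from $\Phi_\eps$ via the algebraic step then gives a smooth, $T$-equivariant, real-valued field scaling like $\rho^{-2/(p-1)}$. Setting $u_{0,\eps} := u_0 + v$ produces a smooth map obeying \eqref{uts} with $\alpha=0$, real-valued (so its imaginary ${\mathcal P}$-components vanish as needed), and with $G_{\mathcal P}[u_{0,\eps},u_{0,\eps}] = G_{\mathcal P}[u_0,u_0] + \eps^2 H_{\mathcal P}$ by construction — this is Proposition \ref{gippy}, and, via the discussion preceding it (additivity \eqref{add} and pairing with $u_1$, $u_{2,\eps}$), it yields Propositions \ref{guip} and \ref{gadc}.

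The main obstacle is the step producing the elliptic reformulation. One must verify that Günther's algebraic manipulations — and in particular the construction of the elliptic operator ${\mathcal L}$ and of the smoothing right inverse (gaining two derivatives) of the linearized Gram-type operator — survive the passage to the present \emph{degenerate} structure, where the slot $(\partial_t,\partial_t)$, and with it the frame vector $\partial_t^2 u_0$, is missing. The key point is that these two omissions are matched: $\partial_t^2 u_0$ would be needed in Günther's frame only to handle the $(\partial_t,\partial_t)$ component, so deleting both at once leaves a consistent reduced system, still elliptic in all $d+1$ spacetime variables. Carrying this out carefully, while threading $T$-equivariance through the elliptic theory on $M$, is the technical heart of the appendix.
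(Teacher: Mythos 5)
Your proposal is essentially correct and follows the same underlying strategy as the paper (Günther's iteration scheme for Nash-type embedding, applied on the compact quotient $M$, with the degenerate $(\partial_t,\partial_t)$ slot and the matching frame vector $\partial_t^2 u_0$ omitted in tandem). The main difference is a genuine simplification: you observe that since $u_0$ is real-valued and the explicitly computed $H_{\mathcal P}$ has $H_{1,i\partial_{x_j}} = H_{1,i\partial_t} = 0$, one may seek a real-valued perturbation $v$ and thereby work entirely within a real Nash/Günther framework. The paper instead proves the more general Proposition \ref{gippy-norm}, which allows nonzero imaginary slots in the prescribed perturbation, and whose proof therefore requires enlarging the dual frame to include $iu$, $iX_j u$, $iX_t u$; your reduction shows that this extra generality is not needed for Proposition \ref{gippy} itself, since the only slot that obstructs passage through Proposition \ref{gad} (namely $G_{1,i\partial_t}$) has already been entirely absorbed by the short map $u_{2,\eps}$ at the previous stage. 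There are two further cosmetic differences worth flagging: the paper carries out the fixed-point iteration directly in $v$ within Hölder spaces $C^{2,\alpha}(M)$ (via the decomposition $G_{{\mathcal P}'}[v,v] = L_u Q_0[v,v] + Q_1[v,v]$ with $Q_i$ built from $(1-\Delta)^{-1}$), whereas you propose to iterate in the "second-order data" $\Phi$ in Sobolev spaces; both are standard variants of Günther's scheme, though your write-up is sketchier on the precise manipulation that converts the derivative-losing quadratic term into a bounded bilinear operator (the carré-du-champ cancellation $-\Delta G[v,v] + G[\Delta v,v] + G[v,\Delta v]$, which the paper spells out). Finally, your Leibniz identity should read $-\langle (D_1 D_2 + D_2 D_1) u_0, v\rangle$ rather than $-2\langle D_1 D_2 u_0, v\rangle$, though the two agree for the commuting operators at hand.
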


Indeed, one can now take $u$ to be the tuple $u \coloneqq (u_{0,\eps}, u_1, u_{2,\eps})$ for a sufficiently small $\eps$, giving the claim (for $m$ large enough).  Note that as $u_1$ was already a smooth non-vanishing embedding, $u$ will be also, regardless of how badly $u_{0,\eps}$ and $u_{2,\eps}$ vanish or fail to be an embedding.

It remains to prove Proposition \ref{gippy}.  In order to be able to work on the compact manifold $M$ rather than the non-compact space $\R \times \R^d \backslash \{(0,0)\}$, it will be convenient to normalise $u_0$ and the differential operators in ${\mathcal D}$ and ${\mathcal P}$ to be $T$-invariant.  More precisely, let us introduce the $T$-invariant vector fields
$$ X_j \coloneqq \rho \partial_{x_j}; \quad X_t \coloneqq \rho^2 \partial_t$$
on $\R \times \R^d \backslash \{(0,0)\}$ (or the quotient space $M$)
for $j=1,\dots,d$, where we identify vector fields with first-order differential operators in the usual fashion.  We also introduce the pairs of rescaled differential operators
\begin{align*}
{\mathcal P}' &\coloneqq \{ (1,1) \} \cup \{ (X_j, X_k): 1 \leq j \leq k \leq d \} \\
&\quad \cup \{ (X_j, X_t): 1 \leq j \leq d \} \\
&\quad \cup \{ (1, i X_j): 1 \leq j \leq d \} \\
&\quad \cup \{ (1, i X_t) \}
\end{align*}
and then define
$$ G_{{\mathcal P}'}[u,v] \coloneqq ( \langle D_1 u, D_2 v \rangle_{\C^m} )_{(D_1,D_2) \in {\mathcal P}'}$$
for smooth $u,v: \R \times \R^d \backslash \{(0,0)\} \to \C^m$  Note that the operators in ${\mathcal P}'$ commute with the dilation operator $T$; in particular, if $u$ is $T$-invariant, then so is $G_{{\mathcal P}'}[u,u]$.

Proposition \ref{gippy} is then a consequence of

\begin{proposition}\label{gippy-norm}  Let $m$ be a positive integer.  Let $u: \R \times \R^d \backslash \{(0,0)\} \to \C^m$ be a smooth map which is $T$-invariant and free, and let $H_{{\mathcal P}'}: \R \times \R^d \backslash \{(0,0)\} \to \R$ be smooth and $T$-invariant.   Then, if $\eps>0$ is small enough, there exists a smooth map $u_\eps: \R \times \R^d \backslash \{(0,0)\} \to \C^m$ that is smooth and $T$-invariant, such that
\begin{equation}\label{gap}
 G_{{\mathcal P}'}[u_\eps,u_\eps] = G_{{\mathcal P}'}[u,u] + \eps^2 H_{{\mathcal P}'}.
\end{equation}
\end{proposition}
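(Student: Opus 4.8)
The plan is to adapt Günther's proof \cite{gunther} of the Nash embedding theorem \cite{nash} to this $T$-invariant, ``partially complexified'' setting. Since $u$ and $H_{\mathcal{P}'}$ are $T$-invariant and the vector fields $X_1,\dots,X_d,X_t$ commute with $T$ — indeed they descend to a global smooth frame of the tangent bundle of the compact manifold $M$ — it suffices to work on $M$, where the standard elliptic theory on a compact manifold without boundary is available. Fix an auxiliary smooth Riemannian metric $g$ on $M$ and a Hölder exponent $\alpha\in(0,1)$, and seek $u_\eps=u+v$ with $v\colon M\to\C^m$ a $T$-invariant perturbation, small in $C^{2,\alpha}(M)$, subject to the pointwise orthogonality constraints $\langle v,iu\rangle_{\C^m}=\langle v,X_ju\rangle_{\C^m}=\langle v,X_tu\rangle_{\C^m}=0$ for $1\le j\le d$. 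We deliberately do \emph{not} impose $\langle v,u\rangle_{\C^m}=0$, so that the $(1,1)$-component of $G_{\mathcal{P}'}$ can be perturbed in either direction.

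Expanding \eqref{gap} with $u_\eps=u+v$ and then converting every first-derivative-of-$v$ term into a pointwise one by the Leibniz rule and the orthogonality constraints (the commutators $[X_j,X_k]$ and $[X_j,X_t]$ are first-order operators with smooth, bounded, $T$-invariant coefficients and produce only lower-order corrections), each component equation acquires the schematic form
\begin{equation*}
\langle v,w_\alpha\rangle_{\C^m}=\eps^2 c_\alpha H_\alpha+Q_\alpha(v,\nabla v),
\end{equation*}
where $w_\alpha$ ranges over the finite list $u$, $iX_ju$, $iX_tu$, $X_jX_ku$ $(j\le k)$, $X_jX_tu$, the $H_\alpha$ are the components of $H_{\mathcal{P}'}$, the $c_\alpha$ are harmless bounded coefficients, and each $Q_\alpha$ is a quadratic form in $(v,\nabla v)$ with smooth, bounded, $T$-invariant coefficients. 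The decisive point is that the constrained directions $iu,X_ju,X_tu$ together with the tested directions $u,iX_ju,iX_tu,X_jX_ku,X_jX_tu$ form, at each point, a set of vectors that is linearly independent over $\R$ in $\C^m$: this is exactly the assertion that $u$ is free (the spans of the $\partial$- and the $X$-derivatives coincide), and it is precisely why $\mathcal{P}'$ had to omit the pair $(X_t,X_t)$, which would force us to test $v$ against $X_t^2u\sim\partial_t^2u$, a direction not controlled by freeness. Consequently, once $m$ is large enough depending on $d$, the linear map $v\mapsto(\langle v,w_\alpha\rangle_{\C^m})$ admits a smooth, $T$-invariant, uniformly bounded right inverse $\mathcal{S}$ (a dual-frame/pseudo-inverse field), and the whole system is equivalent to the fixed-point equation $v=\mathcal{S}[\eps^2\mathbf{H}+\mathbf{Q}(v,\nabla v)]$.

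As written, this map loses a derivative, and the crux — following Günther — is to cancel that loss against an elliptic gain. Using the product rule to expose a divergence structure in the quadratic terms, one rewrites the equation in the form $v=\mathcal{S}_0[\eps^2\mathbf{H}]+\mathcal{E}[\mathbf{N}(v,\nabla v)]$, where $\mathcal{S}_0$ is a smoothing operator of order zero, $\mathcal{E}$ is built from $(I-\Delta_g)^{-1}$ together with the dual-frame fields, and $\mathbf{N}$ is quadratic and contains at most one derivative of $v$; the two derivatives recovered by $\mathcal{E}$ then more than compensate. This makes the right-hand side a self-map of a small ball in $C^{2,\alpha}(M)$ whose Lipschitz constant on that ball is $O(\eps^2)+O(\|v\|_{C^{2,\alpha}})$, hence a contraction for $\eps$ small; Banach's fixed-point theorem yields a solution $v$ of size $O(\eps^2)$, and elliptic bootstrapping (all coefficients, $u$, and $\mathbf{H}$ being smooth) upgrades it to $v\in C^\infty(M)$. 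Lifting back to $\R\times\R^d\setminus\{(0,0)\}$, the map $u_\eps=u+v$ is smooth and $T$-invariant and satisfies \eqref{gap}; note that we need not make $u_\eps$ free, nowhere vanishing, or an embedding, since those properties are supplied in the main argument by pairing $u_\eps$ with $u_1$.

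I expect the Günther-style elliptic iteration to be the main obstacle: arranging the rewriting so that precisely the derivative-losing terms are hit by $(I-\Delta_g)^{-1}$, and verifying the contraction estimate on a small $C^{2,\alpha}$-ball. The other ingredients — the Leibniz bookkeeping, the freeness-implies-pointwise-invertibility lemma, the smoothness and $T$-invariance of the dual frame, and the regularity bootstrap — are routine; the one thing to watch is that no second (or higher) derivative of $v$ beyond those handled by $(I-\Delta_g)^{-1}$ enters the nonlinearity, which is guaranteed by the choice of $\mathcal{P}'$ and of the orthogonality constraints.
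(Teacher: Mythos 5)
Your proposal is correct and follows essentially the same route as the paper: reduce to a perturbation $v$ on the compact quotient $M$, use freeness to build a pointwise right-inverse (your orthogonality constraints on $v$ are exactly what the paper's right-inverse $Z_u$ enforces automatically, since $Z_u F$ is built from the dual fields $w_D$ with $\langle Z_u F, iu\rangle=\langle Z_u F, X_j u\rangle=\langle Z_u F, X_t u\rangle=0$), then recover the derivative loss via Günther's device with $(1-\Delta)^{-1}$ and close by contraction in $C^{2,\alpha}(M)$ followed by a bootstrap. The only piece you leave schematic — the precise decomposition $G_{\mathcal{P}'}[v,v]=L_uQ_0[v,v]+Q_1[v,v]$ with $Q_0,Q_1$ bounded bilinear on $C^{2,\alpha}$ — is indeed the main technical step, and the paper carries it out by writing $G_{\mathcal{P}'}[v,v]=-(1-\Delta)^{-1}\bigl(G_{\mathcal{P}'}[\Delta v,v]+G_{\mathcal{P}'}[v,\Delta v]\bigr)+(1-\Delta)^{-1}Q_2[v,v]$ and then peeling off an $L_u$ factor from the first term using the dual frame.
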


To see why Proposition \ref{gippy-norm} implies Proposition \ref{gippy}, we observe that if $u_0: \R \times \R^d \backslash \{(0,0)\} \to \C^m$ is smooth and obeys \eqref{uts} with $\alpha=0$, and we set $u: \R \times \R^d \backslash \{(0,0)\} \to \C^m$ to be the map $u \coloneqq \rho^{\frac{2}{p-1}} u_0$, then $u$ is $T$-invariant, and we have the linear relation
$$G_{{\mathcal P}'}[u,u](t,x) = S_{t,x} G_{{\mathcal P}}[u_0,u_0](t,x),$$ 
for some invertible linear transformation $S_{t,x}: \R^{{\mathcal P}} \to \R^{{\mathcal P}'}$.  The exact form of $S_{t,x}$ is not important, but for sake of explicitness we can compute $S_{t,x} (G_{D_1,D_2})_{(D_1,D_2) \in {\mathcal P}} \coloneqq (G'_{D_1,D_2})_{(D_1,D_2) \in {\mathcal P}'}$, where
\begin{align*}
G'_{1,1} &\coloneqq \rho^{\frac{4}{p-1}} G_{1,1} \\
G'_{X_j,X_k} &\coloneqq \rho^{\frac{4}{p-1}} ( \rho^2 G_{\partial_{x_j}, \partial_{x_k}} +  \frac{2}{p-1} \rho (\partial_{x_j} \rho) G_{1, \partial_{x_k}} + \frac{2}{p-1} \rho (\partial_{x_k} \rho) G_{\partial_{x_j},1} +
\frac{4}{(p-1)^2} (\partial_{x_j} \rho) (\partial_{x_k} \rho) G_{1,1}) \\
G'_{X_j,X_t} &\coloneqq \rho^{\frac{4}{p-1}} ( \rho^3 G_{\partial_{x_j}, \partial_t} +  \frac{2}{p-1} \rho^2 (\partial_{x_j} \rho) G_{1, \partial_t} + \frac{2}{p-1} \rho^2 (\partial_t \rho) G_{\partial_{x_j},1} +
\frac{4}{(p-1)^2} \rho (\partial_{x_j} \rho) (\partial_t \rho) G_{1,1}) \\
G'_{1,iX_j} &\coloneqq \rho^{\frac{4}{p-1}} \rho G_{1, i\partial_{x_j}} \\
G'_{1,iX_t} &\coloneqq \rho^{\frac{4}{p-1}} \rho^2 G_{1, i\partial_t}.
\end{align*}
Also, from the product rule we see that $u_0$ is free if and only if $u$ is free.  If one then applies Proposition \ref{gippy-norm} with 
$$H_{{\mathcal P}'}(t,x) \coloneqq S_{t,x} H_{{\mathcal P}}(t,x)$$
(which one verifies to be $T$-invariant), then for $\eps$ small enough, one can find a smooth map $u_\eps: \R \times \R^d \backslash \{(0,0)\} \to \C^m$ that is smooth and $T$-invariant, such that
\begin{equation}\label{eqn}
 G_{{\mathcal P}'}[u_\eps,u_\eps](t,x) = S_{t,x} G_{{\mathcal P}}[u,u](t,x) + \eps^2 S_{t,x} H_{{\mathcal P}}(t,x)
\end{equation}
for all $(t,x) \in \R \times \R^d \backslash \{(0,0)\}$.  If we then define $u_{0,\eps}: \R \times \R^d \backslash \{(0,0)\} \to \C^m$ to be the map $u_{0,\eps} \coloneqq \rho^{-\frac{2}{p-1}} u_\eps$, then $G_{{\mathcal P}'}[u_\eps,u_\eps](t,x) = S_{t,x} G_{{\mathcal P}}[u_{0,\eps},u_{0,\eps}](t,x)$, so on applying $S_{t,x}^{-1}$ to \eqref{eqn} we obtain Proposition \ref{gippy} as claimed.

It remains to prove Proposition \ref{gippy-norm}.  Henceforth the reference solution $u$ will be held fixed, as well as the range dimension $m$.  If we write $u_\eps = u + v$, then we can rewrite the equation \eqref{gap} as
\begin{equation}\label{eq}
 L_u v = \eps^2 H_{{\mathcal P}'} - G_{{\mathcal P}'}[v,v]
\end{equation}
where $L_u$ is the linear operator defined on smooth functions $u: M \to \C^m$ by setting $L_u v: M \to \R^{{\mathcal P}'}$ to be the function
$$ L_u v \coloneqq G_{{\mathcal P}'}[u,v] + G_{{\mathcal P}'}[v,u].$$

Our task is now to find a smooth solution $v: M \to \C^m$ to the equation \eqref{eq}.
In coordinates, we can expand $L_u v = ((L_uv)_{D_1,D_2})_{(D_1,D_2) \in {\mathcal P}'}$ as
\begin{align*}
(L_u v)_{1,1} &\coloneqq 2\langle v, u \rangle_{\C^m} \\
(L_u v)_{X_j,X_k} &\coloneqq \langle X_j v, X_k u \rangle_{\C^m} + \langle X_j u, X_k v \rangle_{\C^m}  \\
&= X_j \langle v, X_k u \rangle_{\C^m} + X_k \langle v, X_j u \rangle_{\C^m} - \langle v, (X_j X_k + X_k X_j) u \rangle_{\C^m} \\
(L_u v)_{X_j,X_t} &\coloneqq \langle X_j v, X_t u \rangle_{\C^m} + \langle X_j u, X_t v \rangle_{\C^m}  \\
&= X_j \langle v, X_t u \rangle_{\C^m} + X_t \langle v, X_j u \rangle_{\C^m} - \langle v, (X_j X_t + X_t X_j) u \rangle_{\C^m} \\
(L_u v)_{1,iX_j} &\coloneqq \langle v, i X_j u \rangle_{\C^m} + \langle u, i X_j v \rangle_{\C^m} \\
&= 2 \langle v, i X_j u \rangle_{\C^m} - X_j \langle v, iu \rangle_{\C^m} \\
(L_u v)_{1,iX_t} &\coloneqq \langle v, i X_t u \rangle_{\C^m} + \langle u, i X_t v \rangle_{\C^m} \\
&= 2 \langle v, i X_t u \rangle_{\C^m} - X_t \langle v, iu \rangle_{\C^m}.
\end{align*}
Observe that the components of $L_u v$ are expressed in terms of the coefficients $\langle v, D u \rangle_{\C^m}$, where $D$ ranges over the collection
\begin{align*}
 {\mathcal F} &\coloneqq \{ 1, i, X_t, iX_t \} \\
&\quad  \cup \{ X_j: 1 \leq j \leq d \} \cup \{ iX_j: 1 \leq k \leq d \} \\
&\quad \cup \{ X_j X_k + X_k X_j: 1 \leq j \leq k \leq d \}
\end{align*}
of $T$-invariant differential operators (which may thus be viewed as differential operators on $M$).
As $u$ is free, we see at each point in $M$ that the vectors $Du, D \in {\mathcal F}$ are linearly independent over $\R$.  By Cramer's rule, we may thus find smooth dual fields $w_D: M \to \C^m$ (depending on $u$), which are pointwise real linear combinations of the $Du, D \in {\mathcal F}$, such that
\begin{equation}\label{w-bas}
\langle w_{D_1}, D_2 u \rangle_{\C^m} = \delta_{D_1,D_2} 
\end{equation}
pointwise on $M$, where $\delta_{D_1,D_2}$ is the Kronecker delta (equal to $1$ when $D_1=D_2$, and zero otherwise).  This provides a zeroth-order right-inverse $Z_u$ to $L_u$, defined on any smooth collection $F = (F_{D_1,D_2})_{(D_1,D_2) \in {\mathcal P}'}$ of functions $F_{D_1,D_2}: M \to \R$ by setting $Z_u F: M \to \C^m$ to be the function
\begin{align*}
Z_u F &\coloneqq \frac{1}{2} F_{1,1} w_1 - \sum_{1 \leq j \leq k \leq d} F_{X_j,X_k} w_{X_j X_k + X_k X_j} \\
&\quad - \sum_{j=1}^d F_{X_j,X_t} w_{X_j X_t + X_t X_j} \\
&\quad - \sum_{j=1}^d F_{1,iX_j} w_{iX_j} \\
&\quad - F_{1,iX_t} w_{iX_t}.
\end{align*}
One can easily check from \eqref{w-bas} and the expansion of $L_u$ in coordinates that $Z_u$ is indeed a right-inverse for $L_u$, that is to say
$$ L_u Z_u F = F$$
for all smooth $F: M \to \C^m$.

One could now try to locate a solution to \eqref{eq} using this left-inverse by solving the equation
$$
v = Z_u \eps^2 H_{{\mathcal P}'} - Z_u G_{{\mathcal P}'}[v,v]$$
which would imply \eqref{eq}.  Here we face the familiar problem of \emph{loss of derivatives}, since the Gram-type operator $G_{{\mathcal P}'}$ is first-order whereas $Z_u$ is zeroth order.  It is possible to recover this loss of derivative problem for $\eps$ small enough using the technique of Nash-Moser iteration as in \cite{nash}.  However, we instead follow the simpler approach of Gunther \cite{gunther}, by obtaining a decomposition of the form
\begin{equation}\label{decomp}
 G_{{\mathcal P}'}[v,v] = L_u Q_0[v,v] + Q_1[v,v] 
\end{equation}
where $Q_0, Q_1$ are ``zeroth order'' operators.  We will then be able to use a contraction mapping argument to obtain a solution to the equation
\begin{equation}\label{iter}
 v = Z_u \eps^2 H_{{\mathcal P}'} - Q_0[v,v] - Z_u Q_1[v,v]
\end{equation}
for $\eps$ small enough; applying $L_u$ to both sides, we obtain a solution to \eqref{eq} as desired.

It remains to obtain the decomposition \eqref{decomp} and solve the equation \eqref{iter}.  We will need an elliptic second order operator $-\Delta$ on $M$.  The precise choice of $-\Delta$ is not important, but for sake of concreteness we will take $\Delta$ to be the Laplace-Beltrami operator on $M$ with the Riemannian metric
$$ ds^2 \coloneqq \sum_{j=1}^d \rho^2 dx_j^2 + \rho^4 dt^2$$
(noting that the right-hand side is $T$-invariant and thus descends to a metric on $M$), with the sign chosen so that $-\Delta$ is positive semi-definite; in particular, one can define the resolvent operator $(1-\Delta)^{-1}$ on smooth functions on $M$.  We can then expand
\begin{equation}\label{gexp}
 G_{{\mathcal P}'}[v,v] = - (1-\Delta)^{-1} F + (1-\Delta)^{-1} Q_2[v,v]
\end{equation}
where
$$ F \coloneqq G_{{\mathcal P}'}[\Delta v, v] + G_{{\mathcal P}'}[v, \Delta v] $$
and
$$ Q_2[v,v] \coloneqq  G_{{\mathcal P}'}[v,v] - \Delta G_{{\mathcal P}'}[v,v] + G_{{\mathcal P}'}[\Delta v,v] + G_{{\mathcal P}'}[v,\Delta v].$$
Observe from the Leibniz rule that $Q_2[v,v]$ takes the schematic form
$$ Q_2[v,v] = \sum_{0 \leq a,b \leq 2} O( \nabla^a v \nabla^b v )$$
where the gradient $\nabla$ is with respect to the Riemannian metric $ds^2$ (and the implied coefficients in the $O()$ notation are smooth on $M$); the point is that the ``carr\'e du champ'' type expression
$$ - \Delta G_{{\mathcal P}'}[v,v] + G_{{\mathcal P}'}[\Delta v,v] + G_{{\mathcal P}'}[v,\Delta v] $$
does not have any terms involving third or higher derivatives after cancelling out the top order terms.  Thus, $Q_2$ is a ``zeroth order operator'', for instance it is a bounded bilinear operator on the H\"older space $C^{2,\alpha}(M)$ for any $0 < \alpha < 1$, as can be seen by classical Schauder estimates.

The components of $F$ can be expanded using the Leibniz rule as
\begin{align*}
F_{1,1} &= 2 \langle \Delta v, v \rangle_{\C^m} \\
F_{X_j,X_k} &= X_j \langle \Delta v, X_k v \rangle_{\C^m} + X_k \langle \Delta v, X_j v \rangle_{\C^m} - \langle \Delta v, (X_j X_k + X_k X_j) v \rangle_{\C^m} \\
F_{X_j,X_t} &= X_j \langle \Delta v, X_t v \rangle_{\C^m} + X_t \langle \Delta v, X_j v \rangle_{\C^m} - \langle \Delta v, (X_j X_t + X_t X_j) v \rangle_{\C^m} \\
F_{1,iX_j} &= - X_j \langle \Delta v, i v \rangle_{\C^m} + 2 \langle \Delta v, i X_j v \rangle_{\C^m} \\
F_{1,iX_t} &= - X_t \langle \Delta v, i v \rangle_{\C^m} + 2 \langle \Delta v, i X_t v \rangle_{\C^m}.
\end{align*}
Comparing this with \eqref{w-bas} and the components of $L_u$, we can then write
\begin{equation}\label{fexp}
 F = L_u Q_3[v,v] + Q_4[v,v]
\end{equation}
where $Q_3[v,v]: M \to \C^m$ is the function
\begin{align*}
Q_3[v,v] &\coloneqq \langle \Delta v, v \rangle_{\C^m} w_1 \\
&\quad + \sum_{k=1}^d \langle \Delta v, X_k v \rangle_{\C^m} w_{X_k} \\
&\quad + \langle \Delta v, X_t v \rangle_{\C^m} w_{X_t} \\
&\quad + \langle \Delta v, iv \rangle_{\C^m} w_i
\end{align*}
and $Q_4[v,v]: M \to \R^{{\mathcal P}'}$ is given in components as
\begin{align*}
Q_4[v,v]_{1,1} &\coloneqq 0 \\
Q_4[v,v]_{X_j,X_k} &\coloneqq - \langle \Delta v, (X_j X_k + X_k X_j) v \rangle_{\C^m}  \\
Q_4[v,v]_{X_j,X_t} &\coloneqq - \langle \Delta v, (X_j X_t + X_t X_j) v \rangle_{\C^m}  \\
Q_4[v,v]_{1,iX_j} &\coloneqq 2 \langle \Delta v, i X_j v \rangle_{\C^m} \\
Q_4[v,v]_{1,iX_t} &\coloneqq 2 \langle \Delta v, i X_t v \rangle_{\C^m}.
\end{align*}
Observe that, as with $Q_2[v,v]$, the expressions $Q_3[v,v]$ and $Q_4[v,v]$ both take the schematic form $\sum_{0 \leq a,b \leq 2} O( \nabla^a v \nabla^b v )$, as they does not contain any terms involving third or higher derivatives.

Using the identity
\begin{align*}
 (1-\Delta)^{-1} L_u &= L_u (1-\Delta)^{-1} + (1-\Delta)^{-1} [L_u, 1-\Delta] (1-\Delta)^{-1} \\
&= L_u (1-\Delta)^{-1} - (1-\Delta)^{-1} [L_u, \Delta] (1-\Delta)^{-1} 
\end{align*}
where $[A,B] = AB - BA$ denotes the commutator of $A,B$, as well as \eqref{gexp}, \eqref{fexp}, we obtain an expansion of the form \eqref{decomp} with
$$ Q_0[v,v] \coloneqq - (1-\Delta)^{-1} Q_3[v,v]$$
and
$$ Q_1[v,v] \coloneqq (1-\Delta)^{-1}( Q_2[v,v] - Q_4[v,v] ) + (1-\Delta)^{-1} [L_u,\Delta] (1-\Delta)^{-1} Q_3[v,v].$$
Observe that the commutator $[L_u,\Delta]$ is a second order differential operator on $M$ with smooth coefficients.  From Schauder theory we then conclude that (after depolarisation) $Q_0, Q_1$ are bounded bilinear operators on the H\"older space $C^{2,\alpha}(M)$ for any fixed $0 < \alpha < 1$.  As such, the contraction mapping theorem then guarantees a solution $v$ to the equation \eqref{iter} in the function space $C^{2,\alpha}(M)$ if $\eps$ is sufficiently small (depending on $u$ and $\alpha$).  We are almost done, except that we have not established that $v$ is smooth.  However, from further application of Schauder theory one can establish estimates of the form
$$ \| Q_i[v,v] \|_{C^{k,\alpha}(M)} \leq C_{u,\alpha} \| v \|_{C^{k,\alpha}(M)} \|v\|_{C^{2,\alpha}(M)} + C_{k,u,\alpha} \|v\|_{C^{k-1,\alpha}(M)}^2$$
for any $k \geq 2$ and $i=1,2$, where the quantities $C_{u,\alpha}, C_{k,u,\alpha}$ depend only on the subscripted parameters.  Crucially, the leading constant $C_{u,\alpha}$ is independent of $k$.  As such, a routine induction argument shows that if $\eps$ is sufficiently small (depending on $u$ and $\alpha$, but not on $k$) that all the iterates used in the contraction mapping theorem to construct $v$, and hence $v$ itself, are bounded in $C^{k,\alpha}(M)$ for any given $k \geq 2$, and so $v$ is smooth as required.  This (finally!) completes the proof of Proposition \ref{gadc}.

\end{document}